\newtheorem{theorem}{Theorem}
\newtheorem{thm}{Theorem}[section]
\newtheorem{lemma}[thm]{Lemma}
\newtheorem{claim}[thm]{Claim}
\theoremstyle{remark}
\newtheorem{remark}{Remark}[section]
\newcommand{\1}{\mathbbm{1}}
\title{Non-isomorphic subgraphs in random graphs}
\author{\Large Michael Krivelevich\thanks{School of Mathematical Sciences, Tel Aviv University, Tel Aviv 6997801, Israel.  Research supported in part by NSF-BSF grant 2023688.
\newline
Email: krivelev@tauex.tau.ac.il. 
}, \,\,\,\,\, Maksim Zhukovskii\thanks{The University of Sheffield, School of Computer Science, Sheffield S1 4DP, UK.\newline Email: m.zhukovskii@sheffield.ac.uk.}}
\date{}
\begin{document}

\maketitle

\begin{abstract}
We establish the asymptotic behaviour of $\mu(G(n,p))$, the number of unlabelled induced subgraphs in the binomial random graph $G(n,p)$, for almost the entire range of the probability parameter $p=p(n)\in[0,1]$. In particular, we show that typically the number of subgraphs becomes exponential when $p$ passes $1/n$, reaches maximum possible base of exponent (asymptotically) when $p\gg 1/n$, and reaches the asymptotic value $2^n$ when $p$ passes $2\ln n/n$. For $p\gg \ln n/n$, we get the first order term and asymptotics of the second order term of $\mu(G(n,p))$. We also prove that random regular graphs $G_{n,d}$ typically have $\mu(G_{n,d})\geq 2^{c_d n}$ for all $d\geq 3$ and some positive constant $c_d$ such that $c_d\to 1$ as $d\to\infty$.
\end{abstract}

\section{Introduction}
\label{sc:Intro}

For a graph $G$ let $\mu(G)$ be the number of non-isomorphic induced subgraphs in $G$ (or, in other words, the number of  different {\it unlabelled} induced subgraphs in $G$). This parameter naturally evaluates ``diversity'' of a graph. In particular, for a clique or an empty graph $G$, $\mu(G)=|V(G)|+1$, which is smallest possible. It is natural to expect that for graphs that do not have large homogeneous sets, this diversity parameter is much larger. Indeed, it was conjectured by Erd\H{o}s and R\'{e}nyi that any {\it $c$-Ramsey graph} $G$ on $n$ vertices (i.e., a graph in which all homogeneous induced subgraphs are of size at most $c\log_2 n$) have exponentially  many non-isomorphic induced subgraphs: $\mu(G)>2^{\varepsilon n}$ for some constant $\varepsilon=\varepsilon(c)>0$\footnote{Recall that any graph on $n$ vertices has either a clique or an independent set of size at least $\frac{1}{2}\log_2 n$, so $c\geq\frac{1}{2}$.}. The conjecture was resolved by Shelah in~\cite{Shelah}. Even though the relation between $\mu(G)$ and the size of the largest homogeneous induced subgraph in $G$ has been investigated (see, e.g.,~\cite{AB89,AH91,EH89}), for non-Ramsey graphs fairly tight bounds on $\mu(G)$ are not known. In particular, it is easy to see that there exist graphs $G$ with linear in $|V(G)|$ homogeneous sets whose $\mu(G)$ is still exponential in $|V(G)|$ --- this is the case for a disjoint union of an $n$-clique and a $c$-Ramsey graph on $n$ vertices. Another, perhaps less artificial, example is an $n$-comb graph $G$ --- that is, a tree obtained by joining all vertices of an $n$-path with an independent set of size $n$ via a matching of size $n$ --- it has $\mu(G)\geq 2^{n-3}=2^{|V(G)|/2-3}$.

The asymptotic behaviour of $\mu(G)$ for a binomial random graph $G\sim G(n,p)$ played a key role in the resolution of the reconstruction conjecture for random graphs by Bollob\'{a}s~\cite{Bol-rec} and by M\"{u}ller~\cite{Muller}. It was also exploited by Bonnet, Duron, Sylvester, Zamaraev, and the second author~\cite{BDSZZ} to prove, for every $C>0$, the existence of tiny monotone classes of graphs that do not admit adjacency labeling schemes with labels of size at most $C\log n$. Let us recall that whp\footnote{With high probability, that is, with probability tending to 1 as $n\to\infty$.} the binomial random graph $G(n,p)$ with constant probability of appearance of an edge $p\in(0,1)$ is $c$-Ramsey~\cite{BE:cliques}, for some $c=c(p)>0$. Therefore, whp $G(n,p)$ has $2^{\Theta(n)}$ non-isomorphic subgraphs. Actually, a stronger result holds: In 1976~\cite{Muller} M\"{u}ller proved that whp $\mu(G(n,1/2))=2^{(1-o(1))n}$. 
  Recently~\cite{BDSZZ}, Bonnet, Duron, Sylvester, Zamaraev, and the second author estimated a threshold for the property of having exponentially many non-isomorphic induced subgraphs:
\begin{itemize}
\item if, for some constant $\varepsilon>0$, $p\leq\frac{1-\varepsilon}{n}$, then whp $\mu(G(n,p))=2^{o(n)}$, and 
\item if, for a large enough $C>0$, $\frac{C}{n}\leq p\leq\frac{1}{2}$, then whp $\mu(G(n,p))=2^{\Theta(n)}$. 
\end{itemize}
Note that, when $p=o(1)$, $G(n,p)$ is not Ramsey whp --- say, when $pn>C$ for large enough $C>0$, its independence number is concentrated around $\frac{2\log(np)}{p}$ whp~\cite{Frieze}. In particular, when $p=C/n$, the independence number is of order $\Theta(n)$ whp. Nevertheless, $\mu(G(n,p))$ is still exponential whp for large enough $C>0$. The reason for the subexponential bound when $p\leq\frac{1-\varepsilon}{n}$ is that, in this regime, whp $G(n,p)$ is a disjoint union of trees and unicyclic graphs of size $O(\log n)$. Due to the 
foundational discovery of Erd\H{o}s and R\'{e}nyi~\cite{ER-evol}, the structure of $G(n,p)$ changes dramatically when $p$ crosses a tiny interval around $1/n$: if $p\geq(1+\varepsilon)/n$, then with high probability $G(n,p)$ contains a unique giant component of linear size. It is then natural to expect that the giant component is diverse enough to guarantee an exponential lower bound on $\mu(G(n,p))$ whp. In this paper, among other results, we show that this is indeed the case --- $\frac{1}{n}$ is a sharp threshold for the property of containing  exponentially many non-isomorphic induced subgraphs, improving the result from~\cite{BDSZZ}.

Since the edge-complement of $G(n,p)$ is distributed as $G(n,1-p)$ and since a graph and its edge-complement have exactly the same number of non-isomorphic induced subgraphs, we get that, for $G\sim G(n,p)$ and $G'\sim G(n,1-p)$, the random variables $\mu(G)$ and $\mu(G')$ are identically distributed. Therefore, it suffices to consider the case $p\leq 1/2$ and everywhere below we assume this restriction.

In this paper, we describe asymptotics of $\mu(G(n,p))$ for almost the entire range of probabilities $p=p(n)\in[0,1]$. In particular, we get the asymptotics of 
\begin{itemize}
\item the second order term of $\mu(G(n,p))$ when $p\gg \frac{\ln n}{n}$, 
\item $\mu(G(n,p))$ when $p\geq (2+\varepsilon)\frac{\ln n}{n}$, and 
\item $\log\mu(G(n,p))$ when $p\gg\frac{1}{n}$. 
\end{itemize}
Here is our main result.

\begin{theorem}
Let $p:=p(n)\in[0,1/2]$, $G\sim G(n,p)$, and let $\varepsilon>0$.
\begin{enumerate}
\item If $\frac{\ln n}{n}\ll p\leq\frac{1}{2}$, then whp
\begin{equation}
\mu(G)=2^n-2^{n(1-2p(1-p))+\sqrt{8np(1-p)(1-2p(1-p))\ln n}(1-o(1))}.
\label{eq:mu_very_dense}
\end{equation}
\item If $(2+\varepsilon)\frac{\ln n}{n}\leq p\leq\frac{1}{2}$, then whp $\mu(G)=(1-o(1))2^n$.
\item If $p\leq(2-\varepsilon)\frac{\ln n}{n}$, then whp $\mu(G)=o(2^n)$.
\item There exists $C=C(\varepsilon)>0$ such that, for any $p\in[C/n,1/2]$, whp $\mu(G)\geq 2^{(1-\varepsilon)n}$.
\item Let $\varepsilon$ be sufficiently small. If $np\geq 1+\varepsilon$, then whp $\mu(G)\geq 2^{\varepsilon n/1000}$. If $np\leq 1+\varepsilon$, then whp $\mu(G)\leq 2^{3\varepsilon n}$.
\item There exist $0<c_1=c_1(\varepsilon)\leq c_2=c_2(\varepsilon)<1$ such that the following holds. If $np\leq 1-\varepsilon$, then whp $\mu(G)\leq 2^{n^{c_2}}$. If $np\geq 1-\varepsilon$, then whp $\mu(G)\geq 2^{n^{c_1}}$. Moreover, $c_1,c_2$ are decreasing continuous functions of $\varepsilon\in(0,1)$ such that $c_1(0+)=c_2(0+)=1$,  $c_1(1-)=c_2(1-)=0$ and $c_1,c_2=1-\Theta(\varepsilon^2)$ as $\varepsilon\to 0$.
\end{enumerate}
\label{th:bin_dense}
\end{theorem}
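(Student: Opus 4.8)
\emph{Framework.} The plan is to estimate the deficiency $2^n-\mu(G)$ rather than $\mu(G)$ itself. Writing $\mu_k(G)$ for the number of isomorphism types among the $k$-vertex induced subgraphs of $G$, one has $\mu(G)=\sum_{k=0}^n\mu_k(G)$ and $\mu_k(G)\le\binom nk$, hence
\begin{equation*}
2^n-\mu(G)=\sum_{k=0}^n\Bigl(\binom nk-\mu_k(G)\Bigr)=\sum_{k=0}^n\sum_H\bigl(N_H(G)-1\bigr),
\end{equation*}
where the inner sum runs over isomorphism types $H$ on $k$ vertices realised in $G$ and $N_H(G)$ is the number of $k$-subsets inducing a copy of $H$; thus the deficiency ``counts collisions''. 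The collisions I expect to dominate are \emph{swaps}: if $u,v\notin A$ have the same neighbourhood inside $A$, then $G[A\cup\{u\}]\cong G[A\cup\{v\}]$. Writing $D_{uv}$ for the set of vertices adjacent to exactly one of $u,v$, this occurs precisely when $A\cap D_{uv}=\emptyset$, so, summing over all $k$, the total number of swap pairs equals $\sum_{\{u,v\}}2^{\,n-2-|D_{uv}|}$, where $|D_{uv}|\sim\mathrm{Bin}(n-2,q)$ with $q:=2p(1-p)$.

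\emph{Parts (1)--(3).} For part~(1), the first step is to prove that whp the deficiency is $(1+o(1))\sum_{\{u,v\}}2^{\,n-2-|D_{uv}|}$: collisions between $k$-sets differing in at least two vertices on each side, non-swap collisions between close $k$-sets, and isomorphism classes of size at least $3$ are all of strictly smaller exponential order, \emph{uniformly in $k$}. This rests on the fact that for $p\gg\ln n/n$ the graph $G$, and its large induced subgraphs, are rigid enough to admit no unexpected isomorphisms, and the relevant estimates are first-moment computations that one checks are of strictly smaller exponential order at every level $k$. It then remains to analyse $\Sigma:=\sum_{\{u,v\}}2^{-|D_{uv}|}$. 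Since the $\binom n2$ variables $|D_{uv}|$ are nearly independent (two disjoint pairs share $O(1)$ edges), $\min_{\{u,v\}}|D_{uv}|$ is sharply concentrated: a second-moment argument produces whp a pair with $|D_{uv}|\le qn-2\sqrt{q(1-q)n\ln n}\,(1-o(1))$ --- here the hypothesis $p\gg\ln n/n$, i.e.\ $qn\gg\ln n$, is what puts this deviation in the Gaussian (moderate-deviation) window --- while a union bound forbids any pair with $|D_{uv}|$ below $qn-2\sqrt{q(1-q)n\ln n}\,(1+o(1))$, and the full sum $\Sigma$ is within a $\mathrm{poly}(n)$ factor of its extreme term. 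Hence the deficiency is $2^{\,n-qn+2\sqrt{q(1-q)n\ln n}\,(1+o(1))}$, and substituting $q=2p(1-p)$, $1-q=1-2p(1-p)$ and $4q(1-q)=8p(1-p)(1-2p(1-p))$ turns this into \eqref{eq:mu_very_dense}. For part~(2) the same machinery applies for $p\ge(2+\varepsilon)\tfrac{\ln n}{n}$, except that $\min_{\{u,v\}}|D_{uv}|$ is now governed by a genuine Poisson-type large deviation; one still gets $\min_{\{u,v\}}|D_{uv}|\ge a(\varepsilon)\ln n$ with $a(\varepsilon)>0$, so the defect exponent is $n-\Omega(\ln n)<n$ and $\mu(G)=(1-o(1))2^n$. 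Part~(3) uses a different mechanism: for $p\le(2-\varepsilon)\tfrac{\ln n}{n}$ and $k$ near the dominant scale $\asymp n/2$, the induced subgraph $G[S]$ is distributed essentially as $G(k,p)$, and $p$ lies \emph{below} the connectivity threshold $\tfrac{\ln k}{k}$ of $G(k,p)$ --- this is exactly where the constant $2$ comes from, through $\tfrac{\ln(n/2)}{n/2}$ versus $\tfrac{\ln n}{n}$ --- so $G[S]$ is typically disconnected with many tiny components and collides so abundantly that $\mu_k(G)=o\bigl(\binom nk\bigr)$ at those levels, whence $\mu(G)=o(2^n)$.

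\emph{Parts (4)--(6).} For the upper bounds I would use the component structure. When $np\le1+\varepsilon$, everything outside a giant of at most $(2+o(1))\varepsilon n$ vertices lies in trees and unicyclic graphs of size at most $(1+o(1))\tfrac{\ln n}{I}$, where $I:=|np-1-\ln(np)|=\Theta(\varepsilon^2)$; since the isomorphism type of $G[S]$ is the multiset of types induced on the components, $\mu(G)\le\prod_D\binom{m_D+\mu(D)-1}{m_D}$ over component types $D$ with $m_D$ copies each, and combining this with sharp counts of the number of components of each size and of the diversity of a typical small component (which, being ``far from a comb'' --- short induced paths --- has only modest $\mu(D)$) yields $\mu(G)\le2^{\,n^{1-\Theta(\varepsilon^2)}}$ in the subcritical range (part~(6), upper bound) and $\mu(G)\le2^{|V(\text{giant})|+o(\varepsilon n)}\le2^{3\varepsilon n}$ near criticality (part~(5), upper bound). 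The lower bounds go the opposite way. For part~(6), simply including or excluding, as a whole, all copies of each component type gives pairwise distinct multisets of components, so $\mu(G)\ge 2^{\,\#\{\text{isomorphism types of components of }G\}}$; since every one of the roughly $\rho^{\ell}$ ($\rho=O(1)$) unlabelled trees of each small size $\ell$ actually occurs, with the crossover to uniqueness at $\ell\asymp\tfrac{\ln n}{\ln\rho+I}$, the number of component types is $n^{\,1-\Theta(\varepsilon^2)}$, giving $\mu(G)\ge 2^{\,n^{c_1}}$ with $c_1=1-\Theta(\varepsilon^2)$ matching the upper bound. For parts~(4) and~(5) one instead exhibits a rich induced substructure \emph{inside the giant component}: a ``flippable'' configuration on $(1-\varepsilon)n$ vertices, all of whose induced subgraphs are pairwise non-isomorphic, when $p\ge C(\varepsilon)/n$ (part~(4)); and a linear-sized ``decorated'' substructure of the barely-supercritical giant realising $2^{\varepsilon n/1000}$ types when $np\ge1+\varepsilon$ (part~(5)). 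Monotonicity, continuity and the asymptotics $c_1,c_2=1-\Theta(\varepsilon^2)$, as well as the endpoint values, follow from tracking $\tfrac{\ln n}{I(np)}$.

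\emph{The main obstacle.} I expect two steps to carry the weight. For part~(1), it is the \emph{uniform-in-$k$} control of non-swap collisions: ruling out accidental isomorphisms between induced subgraphs not just near the top level but throughout the exponentially large bulk $k=\Theta(n)$ requires quantitative rigidity of $G(n,p)$ in this range and first-moment bookkeeping precise enough to be beaten by the swap contribution at every $k$ simultaneously. For parts~(4)--(5), it is extracting close to one ``bit per vertex'' from the very sparse, tree-like giant: induced matchings and induced paths only deliver polynomially or sub-exponentially many non-isomorphic subgraphs, so the ``flippable'' configuration must genuinely exploit the non-edges of $G$ alongside a spanning-type skeleton, and the pairwise non-isomorphism of the resulting family is the crux.
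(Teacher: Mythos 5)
Parts 1, 3 and 6 of your plan follow essentially the paper's own route: the defect in parts 1--2 is driven by ``swap'' pairs $u,v$ with identical adjacencies to the rest of the graph (your $|D_{uv}|$, the paper's $|N^+(u,v)\cup N^-(u,v)|$ complemented), part 3 rests on isolated-in-$U$ vertices swappable with outside vertices having no neighbour in $U$, and part 6 on component/multiset counting together with many pairwise non-isomorphic tree components. Two caveats there. First, in part 1 the concentration of $\min_{\{u,v\}}|D_{uv}|$ is not the routine second moment you invoke: rare high-degree vertices inflate the variance, and the paper has to run the second moment on a degree-pruned variable (its Appendix A). Second, your part-2 deduction is wrong as stated: from $\min_{\{u,v\}}|D_{uv}|\ge a(\varepsilon)\ln n$ with a small $a$ you cannot conclude the swap defect is $o(2^n)$, since there are $\binom n2$ pairs and in fact whp some pair has $|D_{uv}|=c\ln n$ with $c\ln 2<2$; what works is $\mathbb{E}\sum_{\{u,v\}}2^{-|D_{uv}|}\le n^2(1-p(1-p))^{n-2}=n^{-\varepsilon+o(1)}$ plus Markov. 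More seriously, at $p=\Theta(\ln n/n)$ the part-1 machinery for non-swap collisions no longer closes (its estimates use $pn\gg\ln n$); the paper has to add a new ingredient, namely that whp every vertex has $\ge\varepsilon^2\ln n$ neighbours inside a typical set $U$, which contributes an extra factor of roughly $p^{\Omega(\ln\ln n)}$ per moved vertex in the union bound over bijections. Your sketch does not supply a substitute.

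The genuine gap is in the lower bounds of parts 4 and 5, exactly the points you yourself call ``the crux'' without resolving them. A ``flippable configuration on $(1-\varepsilon)n$ vertices'' of skeleton-plus-pendants type cannot deliver part 4: any comb- or address-type construction spends a rigid skeleton and, at average degree $O(C)$, the natural variants cap at $2^{cn}$ with $c$ bounded away from $1$ (this is the situation of the paper's Theorem 3 for regular graphs, not of part 4). The paper proves part 4 by a global counting argument with no explicit structure at all: a uniformly random subset $U$ whp lies in an isomorphism class of size $<2^{\varepsilon n}$, because (i) whp every $W\subseteq U$ with $|W|\ge\varepsilon^2n$ spans $\Theta(|W|^2p)$ edges, so any isomorphic copy $U'$ differing from $U$ in at least $\varepsilon^2n$ vertices forces an isomorphism moving $\Omega(\varepsilon^4n^2p)$ edges, which is killed by the union bound $2^nn!\,p^{\Omega(\varepsilon^4Cn\ln n)}=o(1)$ for $C=C(\varepsilon)$ large, and (ii) there are fewer than $2^{\varepsilon n}$ candidate sets closer to $U$ than that. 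Likewise for part 5 your ``decorated substructure realising $2^{\varepsilon n/1000}$ types'' is an assertion, not an argument: the paper needs (a) that the $2$-core of the giant is essentially asymmetric ($|\mathrm{Aut}|=O_P(1)$, Verbitsky--Zhukovskii), so that distinct decorations of core vertices give non-isomorphic subgraphs, (b) the Ding--Lubetzky--Peres contiguity result replacing the pendant trees by independent $\mathrm{Pois}(\lambda')$ Galton--Watson trees, and (c) the new estimate that such a tree has expected log-number of non-isomorphic rooted subtrees $\Omega(1/(1-\lambda'))$. None of these ingredients, nor workable substitutes, appear in your proposal, so parts 4 and 5 remain unproved under your plan.
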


Theorem~\ref{th:bin_dense} establishes the following three transition points: typically the number of subgraphs becomes exponential when $p$ passes $1/n$, reaches maximum possible base of exponent (asymptotically) when $p\gg\frac{1}{n}$, and reaches the asymptotic value $2^n$ when $p$ passes $\frac{2\ln n}{n}$.  It also estimates the order of magnitude of $\log\mu(G(n,p))$ for $p$ around $1/n$: 
$$
\log_2\mu(G(n,(1+\varepsilon)/n))=\Theta(\varepsilon)n\, \text{ and } \, \log_2\mu(G(n,(1-\varepsilon)/n))=n^{1-\Theta(\varepsilon^2)}\quad\text{whp.}
$$
Perhaps the most interesting missing regimes are $p=\frac{1\pm o(1)}{n}$ and $p=\frac{(2\pm o(1))\ln n}{n}$.

Below in this section, we describe the proof strategy of Theorem~\ref{th:bin_dense}. One of the novel ingredients in our proof is the estimation of the expected number of non-isomorphic rooted subtrees of a Galton--Watson tree, which may be of independent interest. We prove that, for a Galton--Watson tree with offspring distribution $\mathrm{Pois}(1-\varepsilon)$, the expected number of non-isomorphic rooted subtrees equals $\exp(\Omega(1/\varepsilon))$. The respective claim is presented in Section~\ref{sc:trees}.

We also show that the number of non-isomorphic induced subgraphs of a random $d$-regular graph $G_{n,d}$ (see, e.g.,~\cite[Chapter 9]{Janson}) inherits the behaviour of this statistics for the supercritical binomial random graph $G(n,p=C/n)$, that is, the  following analogue of the fourth part of Theorem~\ref{th:bin_dense} holds in random regular graphs.

\begin{theorem}
For every $\varepsilon>0$ there exists a large enough $d_0$ such that, for every integer $d\geq d_0$, a  uniformly random $d$-regular graph $G\sim G_{n,d}$ on $[n]$ satisfies $\mu(G)\geq 2^{(1-\varepsilon)n}$ whp. 
\label{th:regular}
\end{theorem}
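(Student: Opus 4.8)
The plan is to transfer the lower bound $\mu(G(n,p)) \geq 2^{(1-\varepsilon)n}$ from the fourth part of Theorem~\ref{th:bin_dense} to the random regular setting via the standard sandwiching/contiguity machinery between $G_{n,d}$ and $G(n,p)$ with $p \approx d/n$. More precisely, for fixed large $d$ one has (Kim--Vu sandwiching, or the monotone-property coupling of Gao--Ganguly--et al., or simply the fact that $G_{n,d}$ contains a copy of $G(n, p)$ with $p = (1-o(1))d/n$ as a subgraph whp under an appropriate coupling) that $G_{n,d}$ whp contains, as a \emph{spanning subgraph}, a graph $H$ distributed as (or stochastically dominating) $G(n, c/n)$ for a constant $c = c(d)$ that we can make as large as we like by taking $d$ large. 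The point is that $\mu$ is \emph{not} monotone under taking subgraphs, so we cannot directly say $\mu(G_{n,d}) \geq \mu(H)$; instead we must re-run the \emph{construction} that produces many non-isomorphic induced subgraphs inside the giant component of $G(n,c/n)$, but now inside $G_{n,d}$.

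The key steps, in order. First, I recall the mechanism behind part (4) of Theorem~\ref{th:bin_dense}: in $G(n, C/n)$ with $C$ large, whp there is a linear-sized collection of vertex-disjoint ``gadgets'' (pendant structures — in the tree-based approach these are the non-isomorphic rooted subtrees of Section~\ref{sc:trees} hanging off a long path or off the $2$-core) such that independently including or excluding each gadget's private vertices yields $2^{(1-\varepsilon)n}$ pairwise non-isomorphic induced subgraphs, the non-isomorphism being certified by a counting invariant (e.g.\ the multiset of pendant-tree isomorphism types, which is destroyed/preserved under taking the induced subgraph on a chosen subset). Second, I verify that this same structure appears whp in $G_{n,d}$: either by the contiguity of $G_{n,d}$ with a union of a Hamilton cycle and $\lfloor (d-2)/2 \rfloor$ random perfect matchings (for $d$ even; an analogous decomposition for $d$ odd), which already exhibits a spanning $2$-regular subgraph plus enough extra random edges to grow the required pendant gadgets; or by showing directly that $G_{n,d}$ whp contains a linear number of vertex-disjoint induced copies of each of the bounded-size trees/gadgets used in the $G(n,C/n)$ argument — a first/second moment computation in the configuration model, entirely analogous to counting small subgraphs in $G(n,d/n)$. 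Third, I assemble: the linear family of disjoint gadgets gives $2^{(1-\varepsilon)n}$ induced subgraphs of $G_{n,d}$ distinguished by the same invariant, hence $\mu(G_{n,d}) \geq 2^{(1-\varepsilon)n}$ whp. Throughout, $d_0 = d_0(\varepsilon)$ is chosen so that the constant $c(d)$ exceeds the threshold $C(\varepsilon)$ from part (4), and so that the density of extra edges beyond a $2$-factor is large enough to realize the gadget count $(1-\varepsilon)n$.

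The main obstacle is the \emph{non-monotonicity} of $\mu$: a clean ``$G_{n,d} \supseteq G(n,c/n) \Rightarrow \mu(G_{n,d}) \geq \mu(G(n,c/n))$'' step is unavailable, so the transfer must be at the level of the explicit distinguishing structure rather than the statistic. Concretely, one has to check that the pendant gadgets used for $G(n,C/n)$ survive as \emph{induced} substructures with the \emph{same} certifying invariant inside $G_{n,d}$ — the extra $d$-regularity edges must not create unwanted adjacencies that collapse distinct gadget types or spoil the ``private vertex set'' property. I expect this to be handled by working in the configuration model and noting that, conditioned on a fixed small collection of disjoint gadgets being present, the remaining half-edges are still (nearly) uniformly matched, so the probability of a ``bad'' chord inside or between gadgets is $O(1/n)$ per pair and a union bound over the linearly many gadgets, combined with a deletion argument (discard the $o(n)$ spoiled gadgets), preserves $(1-o(1))n$ good gadgets. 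A secondary technical point is choosing the decomposition of $G_{n,d}$ convenient for exhibiting both a long path/cycle (the ``spine'' on which gadgets hang) and an independent supply of random edges to form the gadgets; the Hamilton-cycle-plus-matchings contiguity for $d\ge 3$ makes this routine.
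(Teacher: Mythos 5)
There is a genuine gap, and it starts with a mischaracterization of what drives part 4 of Theorem~\ref{th:bin_dense}. That part is \emph{not} proved by exhibiting a linear family of pendant gadgets whose inclusion/exclusion certifies non-isomorphism (that mechanism belongs to part 5 and to Theorem~\ref{th:expanders}, and it only yields bounds of the form $2^{cn}$ with $c$ far from $1$). Part 4 is a counting argument: one shows that for a \emph{uniformly random} vertex subset $U$, whp the isomorphism class of $G[U]$ among induced subgraphs of $G$ has at most $2^{\varepsilon n}$ members, because (i) every linear-sized subset of $U$ spans close to its expected number of edges, and (ii) any isomorphism onto a set $U'$ with $|U\setminus U'|\geq\varepsilon^2 n$ would have to move $\Omega(\varepsilon^4 n^2p)$ edges, an event killed by a union bound over the at most $2^n n!$ candidate isomorphisms. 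The paper's proof of Theorem~\ref{th:regular} adapts exactly this: Chernoff is replaced by Friedman's second-eigenvalue bound plus the expander mixing lemma (Claim~\ref{cl:concentrated_subgraphs-regular}), and the independent-edge probability is replaced by McKay's estimate~\eqref{eq:McKay} for the probability that a prescribed bounded-degree graph sits inside $G_{n,d}$, with $d_0\gg\varepsilon^{-4}$ making the union bound close. Your proposal contains neither ingredient, and the sandwiching/contiguity step you lean on cannot substitute for them (quite apart from the fact that Kim--Vu-type sandwiching needs $d$ growing with $n$, while here $d$ is a constant).

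More importantly, the route you do pursue cannot reach the stated bound. To get $2^{(1-\varepsilon)n}$ subgraphs from independently switchable gadgets, you need roughly $(1-\varepsilon)n$ gadgets each owning at least one private vertex, i.e.\ the ``spine'' must have size $O(\varepsilon n)$ and essentially every other vertex must be a gadget. But $G_{n,d}$ is $d$-regular: each such gadget vertex has $d$ edges, of which at most one goes to the spine in your picture, so the gadget vertices span $\Theta(dn)$ edges among themselves in expectation (the probability that two given vertices are adjacent is $\approx d/n$, and there are $\Theta(n^2)$ pairs). Consequently the induced subgraph on spine-plus-chosen-gadgets is not ``spine plus a multiset of attachments'', the certifying invariant collapses, and your repair step --- ``discard the $o(n)$ spoiled gadgets'' --- fails quantitatively: the number of spoiled gadgets is $\Theta(n)$, indeed almost all of them. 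This is why comb-type constructions (long induced path plus degree-one attachments), which are what actually survive inside a regular graph, give only $2^{c(d)n}$ with $c(d)$ bounded well away from $1$ (and in fact deteriorating as $d$ grows, since induced paths in $G_{n,d}$ have length $O(n\ln d/d)$); they are the right tool for Theorem~\ref{th:expanders}, not for Theorem~\ref{th:regular}. To prove the $(1-\varepsilon)n$ exponent you need the typical-subset counting argument, carried out natively in $G_{n,d}$ as the paper does.
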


This result is essentially tight --- the constant factor in front of $n$ in the power of the exponent has to be bounded away from 1, see details in Section~\ref{sc:regular-tight}. The proof of Theorem~\ref{th:regular} is presented in Section~\ref{sc:regular-proof}. It is interesting to see whether the randomness in Theorem~\ref{th:regular} is essential and its statement can be generalised to pseudorandom graphs. Nevertheless, this discussion falls outside the scope of the present work and is left for future consideration. Finally, we show that $\mu(G_{n,d})$ is exponential whp for every $d\geq 3$.

\begin{theorem}
\label{th:expanders}
Let $d\geq 3$ be a fixed integer constant. Then there exists $c=c(d)>0$ such that whp $\mu(G)\geq 2^{cn}$. 
\end{theorem}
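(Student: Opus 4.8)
The plan is to exhibit, as an induced subgraph of $G\sim G_{n,d}$, a large \emph{caterpillar} --- a path together with pendant leaves attached to some of its vertices --- and to exploit the rigidity of caterpillars. Concretely, suppose $G$ contains an induced path $P=v_1v_2\cdots v_\ell$ together with vertices $w_i$, one for each $i$ in some set $S=\{i_1<i_2<\dots<i_k\}$, such that $w_i\sim v_i$, the $w_i$ are pairwise distinct and pairwise non-adjacent, and each $w_i$ has no neighbour in $P$ other than $v_i$. Then for every $J\subseteq\{i_2,\dots,i_{k-1}\}$ the induced subgraph $H_J:=G[\{v_{i_1},v_{i_1+1},\dots,v_{i_k}\}\cup\{w_i:i\in\{i_1,i_k\}\cup J\}]$ is \emph{exactly} the caterpillar with spine $v_{i_1}v_{i_1+1}\cdots v_{i_k}$ and a leaf at $v_j$ precisely for $j\in\{i_1,i_k\}\cup J$. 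Since $v_{i_1}$ and $v_{i_k}$ always carry a leaf, the unique longest path of $H_J$ is $w_{i_1}v_{i_1}\cdots v_{i_k}w_{i_k}$, so the spine is recovered from $H_J$ as this path minus its two endpoints; hence any isomorphism $H_J\cong H_{J'}$ acts on the spine as the identity or the reflection, which forces $J'$ to be $J$ or its mirror image. Thus each isomorphism type arises for at most two sets $J$, and $\mu(G)\ge 2^{|S|-O(1)}=2^{\Omega(n)}$, which is Theorem~\ref{th:expanders}. (For large $d$ this is far weaker than Theorem~\ref{th:regular}; the content here is that it holds for \emph{every} fixed $d\ge 3$, where no saving from $d\to\infty$ is available.)

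It remains to produce the path and the teeth. For the spine we use that $G_{n,d}$ whp contains an induced path on $\delta n$ vertices for some $\delta=\delta(d)>0$; this is a known feature of sparse random and pseudorandom graphs --- the longest induced path of $G_{n,d}$, $d\ge 3$, has linear length whp --- and can alternatively be obtained directly via a greedy exploration of the configuration model together with a second-moment estimate, using that $G_{n,d}$ is locally tree-like and has only $O(1)$ short cycles whp. Given such a path $P=v_1\cdots v_\ell$ with $\ell=\delta n$, each internal vertex $v_i$ has exactly $d-2\ge 1$ neighbours outside $P$; fix one of them, $w_i$, and call $i$ \emph{good} if $w_i$ is distinct from every other $w_j$ and has no neighbour on $P$ besides $v_i$. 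Whp in $G_{n,d}$ the number of vertices having at least two neighbours in a set of $\delta n$ vertices is $O(\delta^2 n)$ --- the $\Theta(\delta n)$ edges leaving $P$ are spread essentially uniformly among the $\Theta(n)$ vertices off $P$, a birthday-type bound, with the dependence between $P$ and $G$ dealt with by revealing $P$ first and then reasoning about the remaining stubs --- and each such vertex equals $w_i$ for at most $d$ indices $i$, so all but an $O(\delta)$-fraction of the internal indices are good. Among the good indices, the graph induced on the corresponding $w_i$'s has maximum degree at most $d-1$, hence contains an independent set $S$ of size at least a $1/d$-fraction of them. Choosing $\delta$ small yields $|S|\ge\delta n/(3d)=\Theta(n)$, the family $\{w_i:i\in S\}$ meets all the requirements above, and the previous paragraph finishes the proof.

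The main obstacle is the first of these two steps --- the existence of a linear-length induced path in $G_{n,d}$ --- together with the (milder) need to handle carefully the correlation between that path and the randomness used to certify the pendant leaves; both are routine within the sparse-random-graph toolbox but do require attention. By contrast, the rigidity of caterpillars and the greedy extraction of the teeth are entirely elementary.
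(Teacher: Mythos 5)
Your plan is essentially the route the paper takes for Theorem~\ref{th:expanders}: a linear-length induced path in $G_{n,d}$ (both you and the paper import this from Frieze--Jackson type results), one pendant neighbour attached to most spine vertices to form an induced comb/caterpillar, and a rigidity argument showing that any isomorphism between two such subgraphs restricts to one of the two automorphisms of the spine, so each isomorphism class is hit at most twice and $\mu(G)\geq 2^{\Omega(n)}$. Your variant of the rigidity step (always keeping teeth at the two extreme positions so the spine is recovered as the unique longest path minus its endpoints) is correct, and is a minor repackaging of the paper's argument (which keeps all of $P$ and distinguishes the end-leaves $v_1,v_{\ell+1}$ from the teeth by the degrees of their neighbours).

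The step that does not stand as written is the certification of the teeth. The bound ``whp at most $O(\delta^2 n)$ vertices have at least two neighbours in a set of $\delta n$ vertices'' is a first-moment/birthday computation valid for a \emph{fixed} set, whereas $V(P)$ is chosen as a function of $G$; as a statement uniform over all $\delta n$-sets it is false (an adversarial set containing two neighbours of each of $\Theta(\delta n)$ prescribed vertices yields $\Theta(\delta n)$ such vertices), and ``revealing $P$ first and then reasoning about the remaining stubs'' is not a legitimate conditioning, since the path is selected using the entire pairing and the conditional law given that selection is not the configuration model constrained to contain $P$. The paper circumvents exactly this by proving a statement that holds uniformly over all possible choices of $P$: via McKay's formula~\eqref{eq:McKay} it shows that whp $G$ has no subgraph on between $\varepsilon n$ and $2\varepsilon n$ vertices whose number of edges exceeds $(1+\varepsilon'/2)$ times its number of vertices; consequently $G[V(P)\cup U]$ has excess $O(\varepsilon'\varepsilon n)$ however $P$ was found, and all but an $O(\varepsilon')$-fraction of the teeth can be kept (this pruning also makes the surviving teeth automatically pairwise non-adjacent, so your extra independent-set step, while harmless, is not needed). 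Once you replace the birthday/conditioning argument by such a uniform sparsity bound, your proof is complete; note also that your suggested alternative of building the induced path ``by greedy exploration plus a second moment estimate'' is itself a nontrivial result, which the paper simply cites rather than reproves.
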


The proof of Theorem~\ref{th:expanders} is presented in Section~\ref{sc:regular-proof2}.


\paragraph{Strategy and structure of the proof of Theorem~\ref{th:bin_dense}.} In the case $p\gg\frac{\ln n}{n}$, we show in Section~\ref{sc:proof_1} that, for almost all induced subgraphs $H$ in $G\sim G(n,p)$ that have another isomorphic induced subgraph, there exist two vertices $x,x'$ such that $V(H)\setminus\{x\}$ is entirely inside the union of common neighbourhood and common non-neighbourhood of $x$ and $x'$. The second-order term in~\eqref{eq:mu_very_dense} follows: whp the maximum number of vertices that have identical adjacencies to $x,x'$ equals $n(1-2p(1-p))+(1-o(1))\sqrt{8np(1-p)(1-2p(1-p))\ln n}$. 

When $p\leq(2-\varepsilon)\frac{\ln n}{n}$, using the standard second-moment argument, we show in Section~\ref{sc:proof_3} that whp, for a fixed set $U\subset V(G)$ of size $(1/2+o(1))n$, there exist a vertex $x\in U$ and sufficiently many vertices $x'$ outside $U$ such that $x$ and $x'$ do not have neighbours in $U$. It means that $G[U]$ and $G[U\cup\{x'\}\setminus\{x\}]$ are isomorphic for all such $x'$. Since a typical subset of $V(G)$ has $(1/2+o(1))n$ vertices, this observation implies the third claim in Theorem~\ref{th:bin_dense}. It turns out that such isolated vertices is the main reason for having many isomorphic induced subgraphs when $p$ is around $2\ln n/n$: if $p\geq\frac{(2+\varepsilon)\ln n}{n}$, then whp the number of non-isomorphic induced subgraphs is $(1-o(1))2^{n}$. See details in Section~\ref{sc:proof_2}.

In order to prove part 4 of Theorem~\ref{th:bin_dense}, we show in Section~\ref{sc:proof_4} that, when $p>C/n$, a typical subset $U\subset V(G)$ has at most $2^{\varepsilon n}$ other subsets that induce graphs isomorphic to $G[U]$. We prove the latter in the following way: first, we show that whp all subsets of $U$ of size at least $\varepsilon^2 n$ have sufficiently many edges. Then, if a set $U'$ has at least $\varepsilon^2 n$ vertices outside of $U$, any isomorphism $G[U]\to G[U']$ has to move all the edges from $G[U\setminus U']$, which is very unlikely. On the other hand, there are less than $2^{\varepsilon n}$ sets that have at least $|U|-2^{\varepsilon^ 2n}$ common vertices with $U$, implying the required bound.

Note that, for $p\leq C/n$, whp $\log_2\mu(G)/n$ is bounded away from 1. Indeed, in this regime, it is well known whp $G$ has $(1+o(1))n(1-p)^n\geq (e^{-C}+o(1))n$ isolated vertices. 
 Therefore, whp $\log_2\mu(G)\leq (1-e^{-C}+o(1))n$. We also note that, in the strictly supercritical regime $1+\varepsilon\leq np=O(1)$, our lower bound on $\mu(G)$ gives a linear dependency on $\varepsilon$, which is asymptotically tight: if $np=1+\varepsilon$, then whp the giant component $L_1$ of $G$ has size $(2\varepsilon+O(\varepsilon^2))n$ and all the other components have size $O(\log n)$ and contain at most one cycle (see, e.g.,~\cite[Theorems 5.4, 5.10]{Janson}). It is then easy to show that the main contribution to $\mu(G)$ is due to subgraphs of the giant component (see, e.g.,~\cite[Theorem 1.4 (i)]{BDSZZ} for an analogous argument). Therefore, whp $\mu(G_n)\leq (1+o(1))2^{|V(L_1)|}=2^{(2\varepsilon+O(\varepsilon^2))n}$. 

Our proof of part 5 of Theorem~\ref{th:bin_dense} relies on an estimate of the expected number of non-isomorphic subtrees in a Galton--Watson tree as well as on a contiguous model of the giant component due to Ding, Lubetzky, and Peres~\cite{DLP}. We prove that a rooted $\mathrm{Pois}(1-\varepsilon)$--Galton--Watson tree has $\exp(\Theta(1/\varepsilon))$ non-isomorphic subtrees with the same root, on average --- see Claim~\ref{cl:GW-main} in Section~\ref{sc:trees}.

Finally, the fact that whp $\mu(G)=2^{o(n)}$ whenever $np\leq 1-\varepsilon$ is proved in~\cite{BDSZZ}. We show in Section~\ref{sc:proof_6} that the same proof strategy gives the desired bound $\mu(G)\leq 2^{n^{c_2}}$ for some $c_2=c_2(\varepsilon)\in(0,1)$. In order to prove that whp $\log_2\mu(G)\geq n^{c_1}$ for $np\geq 1-\varepsilon$ and some $c_1=c_1(\varepsilon)\in(0,1)$, we combine the following two assertions that hold for a suitable choice of $k=\Theta(\ln n)$: 1)  whp there are at least $n^{c_1}$ connected components $T_1,\ldots,T_m$ isomorphic to trees on $k$ vertices in $G$; 2) whp there are no two components isomorphic to the same tree on $k$ vertices in $G$. It immediately implies the desired bound because disjoint unions of trees from any two different subsets of $\{T_1,\ldots,T_m\}$ are not isomorphic.


\paragraph{Notation.} For a graph $\Gamma$ and a set of vertices $U\subset V(\Gamma)$, we denote by $\Gamma[U]$ the subgraph of $\Gamma$ induced by $U$. We use double braces  $\{\!\!\{\cdot\}\!\!\}$ to distinguish a multiset from a set.

\section{Preliminaries}

For $n\in\mathbb{N},$ we let 
$$
\mathcal{J}_n:=\left[n/2-\sqrt{n\ln n},n/2+\sqrt{n\ln n}\right].
$$
We let $G\sim G(n,p)$ be a random graph on the vertex set $[n]:=\{1,\ldots,n\}$.

\begin{claim}
Let $\mathbf{U}$ be a uniformly random subset of $[n]$ and let $\mathcal{F}_n$ be a family of pairs $(U,H)$, where $U\subset[n]$, and $H$ is a graph on $[n]$. Let $p=p(n)\in[0,1]$ and let $G\sim G(n,p)$ be sampled independently of $\mathbf{U}$. If there exists $\varphi(n)$ such that, for every $m\in\mathcal{J}_n$ and every $U\in{[n]\choose m}$, $\mathbb{P}((U,G)\notin\mathcal{F}_n)\leq\varphi(n)$, then $\mathbb{P}((\mathbf{U},G)\notin\mathcal{F}_n)\leq\varphi(n)+o(1)$.
\label{cl:from_random_to_deterministic}
\end{claim}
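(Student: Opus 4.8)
The plan is to run a law-of-total-probability argument over the size $m=|\mathbf{U}|$, splitting into the "bulk" $m\in\mathcal{J}_n$ and the "tail" $m\notin\mathcal{J}_n$. The first step is to note that $|\mathbf{U}|$ has the $\mathrm{Bin}(n,1/2)$ distribution, so by a Chernoff bound
\[
\mathbb{P}\bigl(|\mathbf{U}|\notin\mathcal{J}_n\bigr)\leq 2\exp\!\left(-\tfrac{2(\sqrt{n\ln n})^2}{n}\right)=2n^{-2}=o(1);
\]
the width $\sqrt{n\ln n}$ in the definition of $\mathcal{J}_n$ is chosen precisely so that this tail is negligible while $\mathcal{J}_n$ itself is still a "typical-size" window.

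Next I would condition on $\{|\mathbf{U}|=m\}$ for a fixed $m$. Since $\mathbf{U}$ is uniform over all $2^n$ subsets of $[n]$, its conditional law given $|\mathbf{U}|=m$ is uniform over $\binom{[n]}{m}$; and since $\mathbf{U}$ and $G$ were sampled independently, conditioning on an event depending only on $\mathbf{U}$ does not change the law of $G$ nor its independence from $\mathbf{U}$. Hence
\[
\mathbb{P}\bigl((\mathbf{U},G)\notin\mathcal{F}_n \mid |\mathbf{U}|=m\bigr)=\binom{n}{m}^{-1}\sum_{U\in\binom{[n]}{m}}\mathbb{P}\bigl((U,G)\notin\mathcal{F}_n\bigr),
\]
which is at most $\varphi(n)$ for every $m\in\mathcal{J}_n$ by the hypothesis of the claim.

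Finally I would combine these: writing $\mathbb{P}((\mathbf{U},G)\notin\mathcal{F}_n)=\sum_{m}\mathbb{P}(|\mathbf{U}|=m)\,\mathbb{P}((\mathbf{U},G)\notin\mathcal{F}_n\mid|\mathbf{U}|=m)$, I bound the terms with $m\in\mathcal{J}_n$ by $\varphi(n)$ (the corresponding weights $\mathbb{P}(|\mathbf{U}|=m)$ summing to at most $1$) and the terms with $m\notin\mathcal{J}_n$ trivially by $1$ (their total weight being $o(1)$ by the first step), so the whole sum is at most $\varphi(n)+o(1)$. There is essentially no obstacle here — it is a routine conditioning computation — and the only point worth a line of care is the observation that uniformity of $\mathbf{U}$ on each layer $\binom{[n]}{m}$ and its independence from $G$ both survive the conditioning on $|\mathbf{U}|$, which is immediate from the product form of the joint distribution.
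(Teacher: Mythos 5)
Your proposal is correct and follows essentially the same route as the paper: condition on $|\mathbf{U}|=m$, use the Chernoff/Hoeffding bound to discard the sizes outside $\mathcal{J}_n$, and use the conditional uniformity of $\mathbf{U}$ on $\binom{[n]}{m}$ together with independence from $G$ to bound the bulk terms by $\varphi(n)$. The only cosmetic difference is that you bound the failure event directly, while the paper bounds the success event from below; the content is identical.
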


\begin{proof}
It suffices to prove that 
$$
 \mathbb{P}\left((\mathbf{U},G)\in\mathcal{F}_n,\,|\mathbf{U}|\in\mathcal{J}_n\right)\geq 1-\varphi(n)-o(1).
$$
Let $\mathbf{U}_m$ be a uniformly random $m$-subset of $[n]$, independent of $G$. We get
\begin{align*}
 \mathbb{P}((\mathbf{U},G)\in\mathcal{F}_n,\,|\mathbf{U}|\in\mathcal{J}_n) & =\sum_{m\in\mathcal{J}_n} \mathbb{P}((\mathbf{U},G)\in\mathcal{F}_n\mid|\mathbf{U}|=m){n\choose m}2^{-n}\\
 & =\sum_{m\in\mathcal{J}_n} \mathbb{P}((\mathbf{U}_m,G)\in\mathcal{F}_n){n\choose m}2^{-n}\\
 &\geq\min_{m\in\mathcal{J}_n}\mathbb{P}((\mathbf{U}_m,G)\in\mathcal{F}_n)-o(1),
\end{align*}
due to the Chernoff bound (see, e.g.,~\cite[Theorem 2.1]{Janson}). Finally,
\begin{align*}
 \mathbb{P}((\mathbf{U},G)\in\mathcal{F}_n,\,|\mathbf{U}|\in\mathcal{J}_n) &\geq\min_{m\in\mathcal{J}_n}\sum_{U\in{[n]\choose m}}\mathbb{P}((\mathbf{U}_m,G)\in\mathcal{F}_n,\,\mathbf{U}_m=U)-o(1)\\
 &=\min_{m\in\mathcal{J}_n}\sum_{U\in{[n]\choose m}}\mathbb{P}((U,G)\in\mathcal{F}_n)\mathbb{P}(\mathbf{U}_m=U)-o(1)\\
 &\geq(1-o(1))(1-\varphi(n))-o(1)\geq 1-\varphi(n)-o(1).
\end{align*}

\end{proof}

\section{Proof of Theorem~\ref{th:bin_dense}, part 1}
\label{sc:proof_1}

We separately prove the upper and the lower bounds. Let 
\begin{equation}
\alpha_n=n(1-2p(1-p)),\quad \beta_n=\sqrt{8np(1-p)(1-2p(1-p))\ln n}.
\label{eq:alpha_beta}
\end{equation}

\paragraph{Lower bound.} Let us call a set $U\subset[n]$ {\it unique}, if there is no other subset $U'\subset[n]$ such that $G[U]\cong G[U']$. It suffices to prove that whp there are at most $2^{\alpha_n+\beta_n(1+o(1))}$ sets that are {\it not} unique. Let us first count sets $U$ such that there exists a set $U'$ with the following properties: 
\begin{itemize}
\item $|U|=|U'|$; 
\item $|U\cap U'|=|U|-1$; and 
\item the bijection $U\to U'$ that does not move any vertex from $U\cap U'$ is an isomorphism between $G[U]$ and $G[U']$.
\end{itemize}
 All pairs $(U,U')$ of such sets can be constructed as follows: choose vertices $x,x'$ arbitrarily. Let $N^{+}:=N^+(x,x')$ be their common neigbourhood, and let $N^{-}:=N^-(x,x')$ be the set of vertices that are adjacent neither to $x$, nor to $x'$. Let $W\subset N^{+}\cup N^{-}$. Then take $U=W\cup\{x\}$, $U'=W\cup\{x'\}$. Clearly, the number of such pairs $(U,U')$ is at most $n^2 2^{\xi}$, where $\xi$ is the maximum cardinality of $N^{+}\cup N^{-}$ over all $x$ and $x'$.

\begin{claim}
Whp $\xi=\alpha_n+\beta_n(1-o(1))$.
\label{cl}
\end{claim}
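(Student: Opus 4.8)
The quantity $\xi$ is the maximum, over all ordered pairs $(x,x')$ of distinct vertices, of $|N^+(x,x')\cup N^-(x,x')|$, where $N^+$ is the common neighbourhood and $N^-$ the common non-neighbourhood (among the remaining $n-2$ vertices). The plan is to fix a pair $(x,x')$, compute the distribution of $Z_{x,x'} := |N^+(x,x')\cup N^-(x,x')|$, and then take a union bound over the $\binom{n}{2}$ pairs. For a fixed third vertex $v$, the four configurations of its adjacencies to $x$ and $x'$ are equiprobable only when $p=1/2$; in general, $v$ lies in $N^+$ with probability $p^2$ and in $N^-$ with probability $(1-p)^2$, so $v\in N^+\cup N^-$ with probability $q:=p^2+(1-p)^2 = 1-2p(1-p)$. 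These events are independent over the $n-2$ choices of $v$, so $Z_{x,x'}\sim\mathrm{Bin}(n-2,q)$, which has mean $(n-2)q = \alpha_n - 2q = \alpha_n(1-o(1))$ and variance $(n-2)q(1-q) = np(1-p)(1-2p(1-p))\cdot(2+o(1))$; note $\beta_n = \sqrt{8np(1-p)(1-2p(1-p))\ln n}$ is exactly $(2\sqrt 2 +o(1))$ times the standard deviation, i.e. $\beta_n^2 = 4(1+o(1))\cdot 2\mathrm{Var}(Z_{x,x'})\ln n$... let me recompute: $\mathrm{Var} = 2(1+o(1))np(1-p)(1-2p(1-p))$, and $\beta_n^2 = 8np(1-p)(1-2p(1-p))\ln n = 4(1+o(1))\mathrm{Var}(Z_{x,x'})\ln n$.

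\textbf{Upper bound on $\xi$.} By the Chernoff bound (\cite[Theorem~2.1]{Janson}), for $t>0$,
\[
\PR\bigl(Z_{x,x'} \ge (n-2)q + t\bigr) \le \exp\!\left(-\frac{t^2}{2((n-2)q(1-q) + t/3)}\right).
\]
Taking $t = \beta_n(1+\delta)$ for small fixed $\delta>0$: since $t = o(n)$ and $t \gg (n-2)q(1-q)\cdot$(anything bounded), actually $t/3 = o((n-2)q(1-q))$ because $\beta_n = O(\sqrt{n\ln n})$ while $(n-2)q(1-q) = \Theta(np(1-p))$ and in the regime $p\gg \ln n/n$ one has $np(1-p)\gg \ln n$ hence $\beta_n = O(\sqrt{np(1-p)\ln n}) = o(np(1-p))$. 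Thus the bound is $\exp(-(1-o(1))\tfrac{\beta_n^2(1+\delta)^2}{2\mathrm{Var}}) = \exp(-(1-o(1))\cdot 2(1+\delta)^2\ln n) = n^{-2(1+\delta)^2(1-o(1))}$. Multiplying by $\binom{n}{2} < n^2$ gives a bound $n^{-2\delta(1-o(1))} \to 0$, so whp $\xi \le (n-2)q + \beta_n(1+\delta) = \alpha_n + \beta_n(1+o(1))$ (after letting $\delta\to 0$ slowly, or by noting $\delta$ arbitrary).

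\textbf{Lower bound on $\xi$.} Here I need \emph{some} pair $(x,x')$ with $Z_{x,x'}$ large; the obstacle is that the variables $\{Z_{x,x'}\}$ over different pairs are heavily correlated, so a naive second-moment method on the count of pairs exceeding the threshold is delicate. The cleanest route is to restrict to $\lfloor n/2\rfloor$ \emph{disjoint} pairs $(x_1,x_1'),\dots$; but even those share the vertices being counted. A better route: pick a fixed set $S$ of $\sqrt{n}$ vertices, expose only edges incident to $S$, and use that the pairs $(x,x')\in S\times S$ give $\binom{\sqrt n}{2} = \Theta(n)$ variables $Z_{x,x'}$ each a $\mathrm{Bin}(n-O(\sqrt n),q)$, now with enough independence (conditioning on edges inside $S$, the counts over $[n]\setminus S$ use disjoint... no, still the same ground set). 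The correct standard fix: use the Chernoff \emph{lower} tail for a single well-chosen pair is not enough; instead apply the second moment method to $N := \#\{(x,x') : Z_{x,x'} \ge \alpha_n + \beta_n(1-\delta)\}$, showing $\EXP N \to\infty$ (from $\PR(Z_{x,x'}\ge \alpha_n+\beta_n(1-\delta)) \ge n^{-2(1-\delta)^2(1+o(1))}$ via the Chernoff lower bound on binomial upper tails, which is tight up to the exponent) and $\EXP N^2 = (1+o(1))(\EXP N)^2$. The variance computation is where the main work lies: for two pairs sharing $0$ or $1$ vertices the events are nearly independent, and pairs sharing a common structure contribute a lower-order term since there are only $O(n^3)$ of them against $(\EXP N)^2 = n^{\Omega(1)}\cdot(\text{something})$ — one must check $\EXP N \gg \sqrt{n}$ perhaps by choosing $\delta$ appropriately and using that $2(1-\delta)^2 < 2$, giving $\EXP N \ge n^{2 - 2(1-\delta)^2 - o(1)} = n^{\Theta(\delta)}$. \emph{The main obstacle is exactly this second-moment variance estimate}, controlling the covariance between $Z_{x,x'}$ and $Z_{y,y'}$ when $\{x,x'\}\cap\{y,y'\}\neq\emptyset$; I expect one shows the covariance is small enough (the shared vertex contributes only a bounded multiplicative factor) that the pair-correlation term is $o((\EXP N)^2)$, and then Chebyshev gives $N \ge 1$ whp. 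Combining the two bounds and sending $\delta\to 0$ yields $\xi = \alpha_n + \beta_n(1-o(1))$ whp.
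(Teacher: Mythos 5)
Your upper bound is correct and is essentially the paper's argument (the paper uses de Moivre--Laplace where you use Chernoff; both give $\xi\leq \alpha_n+(1+o(1))\beta_n$ after a union bound over the $O(n^2)$ pairs). The gap is in the lower bound, and it is exactly at the step you flag and then wave through: the hope that for two pairs sharing a vertex ``the shared vertex contributes only a bounded multiplicative factor'' to the covariance is false for most of the range of $p$. Conditionally on $\mathrm{deg}(x')=m$, the mean of $\xi_{x,x'}$ is $n(1-p)-m(1-2p)+O(1)$, so when $p$ is bounded away from $1/2$ an atypically small degree of $x'$ simultaneously inflates $\xi_{x,x'}$ and $\xi_{x',x''}$. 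A short large-deviation optimisation (pay once $e^{-a^2/2}$ for a degree deviation of $a$ standard deviations, collect the boost in both tails) shows that for small $p$ the joint probability of the two threshold events is about $n^{-(8-2\varepsilon)/3+o(1)}$, not the product $n^{-4+\varepsilon+o(1)}$ of the marginals. Since there are $\Theta(n^3)$ vertex-sharing pairs, they contribute roughly $n^{1/3+2\varepsilon/3}$ to $\mathbb{E}[N^2]$, which swamps $(\mathbb{E}N)^2=n^{2\varepsilon-o(1)}$; Chebyshev applied to the raw count $N$ therefore fails. So the second-moment variance estimate is not merely ``the main work'' left to be checked --- as proposed, it is not true.

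The paper gets around this (Appendix A, Claim A.1) by pruning: it counts only pairs $\{x,x'\}$ exceeding the threshold \emph{and} having $\mathrm{deg}(x),\mathrm{deg}(x')\in[np-\sqrt{2np(1-p)\ln n},\,np+\sqrt{2np(1-p)\ln n}]$. One first checks that the threshold event together with an atypical degree has probability $o(n^{-2})$, so the pruned count $\tilde X$ has essentially the same expectation as $X$; then the covariance of the pruned events for pairs sharing a vertex is bounded by conditioning on the degree of the shared vertex and applying local limit theorems, with the analysis split into several ranges of $p$ (the cited phenomenon comes from the analogous problem for common neighbourhoods in~\cite{Rodi}). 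If you want a lower bound without this machinery, note the paper's main text settles for the weaker Claim~\ref{cl:weaker} ($\xi\geq\alpha_n+\frac{1+o(1)}{2}\beta_n$) via a cleaner trick: fix $x$, expose $N(x)$, and take the vertex $x'\notin N(x)$ maximising $\xi^-_{x'}\sim\mathrm{Bin}(n-m-2,1-p)$, using known results on maxima of i.i.d.\ binomials; but that only recovers half of the second-order term, so it does not by itself prove the claim as stated.
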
 

\begin{proof}
Fix a small $\varepsilon>0$. Fix vertices $x,x'$. Then $\xi_{x,x'}:=|N^+\cup N^-|\sim\mathrm{Bin}(n,q:=1-2p(1-p))$. Then, by the de Moivre--Laplace limit theorem
\begin{align*}
 \mathbb{P}\left(\xi_{x,x'}-nq>\sqrt{(4+\varepsilon)nq(1-q)\ln n}\right)
 &=(1+o(1))\int_{\sqrt{(4+\varepsilon)\ln n}}^{\infty}\frac{1}{\sqrt{2\pi}}e^{-(1/2+o(1))t^2}dt\\
 &=\frac{n^{-2-\varepsilon+o(1)}}{\sqrt{2(4+\varepsilon)\pi\ln n}}.
\end{align*}
Since $\xi=\max_{x,x'}\xi_{x,x'}$, the union bound over all pairs $x,x'$ completes the proof of the upper bound 
$$
\xi\leq nq+\sqrt{(4+\varepsilon)nq(1-q)\ln n}=\alpha_n+\sqrt{1+\varepsilon/4}\cdot\beta_n
$$ 
whp.

In the same way, letting 
$$
\mathcal{B}_{x,x'}=\left\{\xi_{x,x'}-\alpha_n>\sqrt{(4-\varepsilon)nq(1-q)\ln n}\right\},
$$
we get
$$
 \mathbb{P}\left(\mathcal{B}_{x,x'}\right)=\frac{n^{-2+\varepsilon+o(1)}}{\sqrt{2(4+\varepsilon)\pi\ln n}}.
$$
Let $X$ be the number of pairs $\{x,x'\}$ such that the event 
$\mathcal{B}_{x,x'}(k)$ holds. We get that $\mathbb{E}[X]=n^{\varepsilon-o(1)}$. One can expect that $\mathrm{Var}[X]=o((\mathbb{E}[X])^2)$ implying the desired assertion due to Chebyshev's inequality.  However, the rare event that there exists a vertex with a large degree contributes superfluously to the variance (cf.~\cite{Rodi}). This makes the proof technically involved, and we move it to Appendix~\ref{sc:appendix}. For the sake of clarity of presentation, here we only prove a weaker version of the lower bound in Claim~\ref{cl} that asserts the right order of magnitude of the second-order term of $\xi$. This weaker version is stated below as Claim~\ref{cl:weaker}.
\end{proof}

\begin{claim}
Whp $\xi>\alpha_n+\frac{1+o(1)}{2}\beta_n$.
\label{cl:weaker}
\end{claim}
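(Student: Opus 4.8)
The starting point is to rewrite $\xi$ through symmetric differences of neighbourhoods. For distinct $x,x'$, a vertex $v\notin\{x,x'\}$ belongs to $N^+(x,x')\cup N^-(x,x')$ exactly when $\1[v\sim x]=\1[v\sim x']$, so, writing $r:=2p(1-p)$ for the probability that a fixed vertex distinguishes $x$ from $x'$,
\[
\xi_{x,x'}=(n-2)-\bigl|\{v\notin\{x,x'\}:\1[v\sim x]\neq\1[v\sim x']\}\bigr|.
\]
Since $\alpha_n=n(1-r)$ and $\tfrac12\beta_n=\sqrt{nr(1-r)\ln n}$, Claim~\ref{cl:weaker} amounts to producing a pair whose ``distinguishing set'' has size about $\sqrt{\ln n}$ standard deviations below its mean $\approx nr$; a single fixed pair does this with probability $n^{-1/2-o(1)}$. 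The full claim (with $\beta_n$ in place of $\tfrac12\beta_n$) would need a union bound over all $\binom n2$ \emph{correlated} pairs, but for the weak claim it suffices to exhibit about $\sqrt n$ asymptotically independent pairs and take the best of them.

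To produce such pairs I would fix a ``matching'': set $k:=\lfloor\sqrt n\,\ln\ln n\rfloor$, choose $2k$ distinct vertices forming pairs $\{x_1,x_1'\},\dots,\{x_k,x_k'\}$, let $M$ be their union and $R:=[n]\setminus M$ (so $|R|=(1-o(1))n$), and put $D_i:=|\{v\in R:\1[v\sim x_i]\neq\1[v\sim x_i']\}|$. Each $D_i$ depends only on the edges between $\{x_i,x_i'\}$ and $R$, so $D_1,\dots,D_k$ are independent, each distributed as $\mathrm{Bin}(|R|,r)$. A standard moderate‑deviation lower bound for the binomial (obtained directly from Stirling's formula; the hypothesis $p\gg\ln n/n$ guarantees $|R|r\gg\ln n$, which is exactly what puts the required deviation safely in the moderate regime) gives, for any fixed $\delta\in(0,1)$,
\[
\mathbb{P}\!\left[D_i\le |R|r-(1-\delta)\tfrac12\beta_n\right]\ \ge\ n^{-(1-\delta)^2/2-o(1)}\ \ge\ n^{-1/2+\delta/4}
\]
for $n$ large. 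Hence the expected number of indices $i$ with $D_i\le|R|r-(1-\delta)\tfrac12\beta_n$ is at least $k\cdot n^{-1/2+\delta/4}=n^{\delta/4}\ln\ln n\to\infty$, and by independence whp there is such an index $i^{\ast}$.

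It remains to pay for the part of the symmetric difference living inside $M$. With $E_{i^{\ast}}:=|\{v\in M\setminus\{x_{i^{\ast}},x_{i^{\ast}}'\}:\1[v\sim x_{i^{\ast}}]\neq\1[v\sim x_{i^{\ast}}']\}|$, the index $i^{\ast}$ is determined by the $M$--$R$ edges, which are independent of the edges inside $M$, so $\mathbb{E}[E_{i^{\ast}}]=\sum_i\mathbb{P}[i^{\ast}=i]\,\mathbb{E}[E_i]\le 2kr$; and $2kr\le(\ln\ln n/\sqrt{\ln n})\,\beta_n=o(\beta_n)$ by the choice of $k$, so Markov's inequality gives $E_{i^{\ast}}=o(\beta_n)$ whp. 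Combining this with $D_{i^{\ast}}\le|R|r-(1-\delta)\tfrac12\beta_n$ and $n-|R|r=\alpha_n+2kr$ (together with $2kr=o(\beta_n)$), we obtain whp
\[
\xi\ \ge\ \xi_{x_{i^{\ast}},x_{i^{\ast}}'}\ =\ (n-2)-D_{i^{\ast}}-E_{i^{\ast}}\ \ge\ \alpha_n+(1-\delta)\tfrac12\beta_n-o(\beta_n).
\]
Since $\delta>0$ was an arbitrary fixed constant, a routine diagonalisation over $\delta\to0$ upgrades this to $\xi\ge\alpha_n+\tfrac{1-o(1)}{2}\beta_n$ whp, which is Claim~\ref{cl:weaker}.

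The main obstacle I expect is the calibration of $k$: we need $k\gg\sqrt n$ so that a dip of $\sqrt{\ln n}$ standard deviations is typically attained by at least one of the $k$ independent pairs, but simultaneously $2kr=o(\beta_n)$ so that discarding the $M$‑part of each symmetric difference costs only $o(\beta_n)$. When $p$ is bounded away from $0$ one has $\beta_n=\Theta(\sqrt{n\ln n})$, so the admissible window $\sqrt n\ll k\ll\sqrt{n\ln n}$ is narrow, and one must check that the single choice $k=\lfloor\sqrt n\,\ln\ln n\rfloor$ works uniformly over the whole range $\ln n/n\ll p\le\tfrac12$. The second delicate point is that the needed binomial estimate is at a deviation of order $\sqrt{\ln n}$ standard deviations, which falls outside the scope of the ordinary central limit theorem once $p$ is only barely above $\ln n/n$ (the standard deviation is then merely polylogarithmic), so one has to argue via a moderate‑deviation (equivalently, Poisson‑type) lower bound rather than via de Moivre--Laplace in its classical form.
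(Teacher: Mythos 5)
Your plan is correct, but it follows a genuinely different route from the paper. The paper fixes a single vertex $x$, exposes $N(x)$ (of size $m\approx np$), and for each $x'\notin N(x)$ splits $\xi_{x,x'}=\xi^+_{x'}+\xi^-_{x'}$ with $\xi^-_{x'}\sim\mathrm{Bin}(n-m-2,1-p)$ independent across the $\approx n$ choices of $x'$; it then quotes a known result on the maximum of i.i.d.\ binomials to gain $(1-o(1))\sqrt{2n(1-p)^2p\ln n}$ from the common-non-neighbourhood part alone, and adds a concentrated $\xi^+$. You instead take $\Theta(\sqrt n\,\ln\ln n)$ vertex-disjoint pairs, use the exact independence of the distinguishing counts $D_i\sim\mathrm{Bin}(|R|,2p(1-p))$ restricted to the complement $R$, and extract a single pair that dips $(1-\delta)\sqrt{\ln n}$ standard deviations below the mean via a Stirling-type moderate-deviation lower bound, paying a separate $o(\beta_n)$ bookkeeping term $E_{i^*}$ for the edges inside the matching (your independence argument for $\mathbb{E}[E_{i^*}]$ is sound, since $i^*$ is measurable with respect to the $M$--$R$ edges only). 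Both routes land at the same second-order gain of order $\tfrac12\beta_n$: the paper gets $\sqrt{2\ln n}$ standard deviations of the smaller variance $n(1-p)^2p$, you get $\sqrt{\ln n}$ standard deviations of the full variance $nr(1-r)$ with $r=2p(1-p)$. What the paper's approach buys is brevity — one conditioning step plus an off-the-shelf extreme-value citation; what yours buys is self-containedness (no max-of-binomials reference, no conditioning on $N(x)$) at the cost of proving the deviation bound yourself. That bound is indeed valid as you state it: with $a\asymp\sigma\sqrt{\ln n}$ and $\sigma^2=|R|r(1-r)\asymp np(1-p)\gg\ln n$, the relative deviation $a/(|R|r)$ is $o(1)$, the cubic correction to $|R|\,D(r-a/|R|\,\|\,r)$ is $O(\ln n\sqrt{\ln n/(np)})=o(\ln n)$, and summing the local probabilities (each of order $\sigma^{-1}e^{-|R|D}$) over an interval of length $\sigma$ removes the polynomial prefactor, giving $n^{-(1-\delta)^2/2-o(1)}$ uniformly over $\ln n/n\ll p\le 1/2$; your calibration $k=\lfloor\sqrt n\,\ln\ln n\rfloor$ does satisfy both $k\,n^{-1/2+\delta/4}\to\infty$ and $2kr\le\beta_n\ln\ln n/\sqrt{\ln n}=o(\beta_n)$ for the whole range of $p$, so the flagged window issue is not a problem.
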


\begin{proof}
Fix a vertex $x$ and expose its neighbourhood $N(x)$. Let $m=|N(x)|$.  By the Chernoff bound, whp 
$$
|m-np|\leq\sqrt{np\ln\ln n}.
$$
Then, take any vertex $x'\notin N(x)$. It has
$ |N^+(x,x')\cup N^-(x,x')|=\xi^+_{x'}+\xi^-_{x'}$, where $\xi^+_{x'}\sim\mathrm{Bin}(m,p)$, $\xi^-_{x'}\sim\mathrm{Bin}(n-m-2,1-p)$ are mutually independent for all $x'\notin N(x)$. Let $x'$ be a vertex with the maximum value of $\xi^-_{x'}$. It is known that whp 
$$
\xi^-_{x'}=n(1-p)^2+(1-o(1))\sqrt{2n(1-p)^2p\ln n}
$$ 
(see, e.g.,~\cite{Bin_max}). Since $\xi^+_{x'}$ is independent of all $\xi^-_{y}$, $y\notin N(x)\cup\{x\}$, we get that whp $|\xi^+_{x'}-np^2|\leq\sqrt{np^2\ln\ln n}$. Therefore, whp
\begin{align*}
 \max_{y}|N^+(x,y)\cup N^-(x,y)| & \geq |N^+(x,x')\cup N^-(x,x')|\\
 & \geq n((1-p)^2+p^2)+\sqrt{(2-o(1))n(1-p)^2p\ln n}-\sqrt{np^2\ln\ln n}\\
 &\geq \alpha_n+\frac{1+o(1)}{2}\beta_n,
\end{align*}
completing the proof.
\end{proof}

Assume that $U,U'\subset[n]$ are such that $|U|=|U'|=:a$ and there exist a set $W\subset U\cap U'$ of size at least $a-10$ and an isomorphism $G[U]\to G[U']$ that does not move any vertex of $W$. Let the isomorphism map a vertex $x\in U\setminus U'$ to a vertex $x'\in U'$. Then sets $W\cup\{x\}$ and $W\cup\{x'\}$ induce isomorphic subgraphs of $G$. Therefore, due to Claim~\ref{cl},  whp the number of such pairs $\{U,U'\}$ is at most 
$$
n^{20}n^2 2^{\xi}\leq 2^{\alpha_n+\beta_n(1+o(1))}
$$ 
whp.

Let us call a set $U\subset[n]$ {\it bad}, if there exists another subset $U'\subset[n]$ and an isomorphism $G[U]\to G[U']$ that moves at least $10$ vertices of $U$.  It remains to show that the number of bad sets is $O_P(2^{\alpha_n})$.

Let $\mathbf{U}\subset[n]$ be a uniformly random subset. It suffices to prove that, for all large enough $n$,
$$
\mathbb{P}(\mathbf{U}\text{ is bad})\leq 2^{-2np(1-p)}.
$$
Indeed, assuming that, for some $\delta>0$ and for infinitely many $n$, 
$$
\mathbb{P}\left(\text{the number of bad sets is more than }\frac{1}{\delta}2^{\alpha_n}\right)>\delta,
$$
we get that 
$$
\mathbb{P}(\mathbf{U}\text{ is bad})>\delta\frac{\frac{1}{\delta}2^{\alpha_n}}{2^n}=2^{-2np(1-p)}
$$
 --- a contradiction. 
 
Since 
$$
\mathbb{P}(|\mathbf{U}-n/2|>0.4n)=\mathbb{P}(|\mathrm{Bin}(n,1/2)-n/2|>0.4n)\leq n\cdot {n\choose \lceil 0.1n\rceil} 2^{-n}\ll 2^{-n/2}\leq 2^{-2np(1-p)},
$$
it suffices to prove that 
$$
 \mathbb{P}(\mathbf{U}\text{ is bad},\,|\mathbf{U}|\in[0.1 n,0.9n])\leq  2^{-2np(1-p)-1}.
$$
Let us fix an integer $m\in[0.1 n,0.9n]$. We get
\begin{equation}
 \mathbb{P}(\mathbf{U}\text{ is bad},\,|\mathbf{U}|\in[0.1 n,0.9n])=\sum_{m=0.1n}^{0.9n} \mathbb{P}(\mathbf{U}\text{ is bad}\mid|\mathbf{U}|=m){n\choose m}2^{-n}.
 \label{eq:binomial_to_uniform_very_dense}
\end{equation}
We then denote by $\mathbf{U}_m$ a uniformly random $m$-subset of $[n]$. Since $G$ and $\mathbf{U}_m$ are sampled independently, we may treat $\mathbf{U}_m$ as a deterministic fixed subset $U\subset [n]$ of size $m$. 

Fix $U\in{[n]\choose m}$. Let $k\in[2m-n,m-1]$ be a non-negative integer. Fix a set $U'\subset[n]$ that has $k$ common vertices with $U$. Also, let us fix a subset $W\subset U\cap U'$ of size $j\leq k$ and a bijection $\sigma: U\to U'$ with the set of fixed points $W$. Let us estimate the probability that $\sigma$ is an isomorphism between $G[U]$ and $G[U']$. The bijection $\sigma$ can be represented as a disjoint union of directed paths $P_1=(x^1_1,\ldots,x^1_{s_1}),\ldots,P_{\ell}=(x^{\ell}_1,\ldots,x^{\ell}_{s_{\ell}})$ and cycles $C_1=(y^1_1,\ldots,y^1_{t_1}),\ldots,C_h=(y^h_1,\ldots,y^h_{t_h})$. Each cycle is entirely inside $(U\cap U')\setminus W$, and each path has the first vertex in $U\setminus U'$, the intermediate vertices in $(U\cap U')\setminus W$, and the last vertex in $U'\setminus U$. For every $i\in[\ell]$, adjacencies between $x^i_1$ and all the other vertices of $G$ identify uniquely the adjacencies from $x^i_2,\ldots, x^i_{t_i}$; the same holds for each cycle $C_i$. Thus, as soon as the edges of $G$ from $x^1_1,x^2_1,\ldots, x^{\ell}_1$ are exposed, all the edges with at least one vertex in $U'\setminus U$ are defined uniquely. Furthermore, adjacencies to $y^1_1,y^2_1,\ldots, y^{h}_1$ identify the rest of $E(G[U'])\setminus E(G[W])$. Recalling that $p\leq 1/2$ and letting $W':=\{y^1_1,y^2_1,\ldots, y^{h}_1\}$, we get
\begin{align*}
\mathbb{P}(\sigma(G[U])=G[U'])& \leq\left(\max\{p,1-p\}\right)^{\left|{U'\setminus W'\choose 2}\cup((U'\setminus U)\times(U\cap U'))\setminus{W\choose 2}\right|}\\
&\leq (1-p)^{{m-j-h\choose 2}+(k-j-h)j+(m-k)(j+h)},
\end{align*}
where, for brevity, we identify $(U'\setminus U)\times(U\cap U')$ with the set of unordered pairs $[(U'\setminus U)\times(U\cap U')]/S_2$. The last expression is maximised when $h$ is maximum possible, i.e. $h=\frac{k-j}{2}$ (here we assume that $h$ is not necessarily in integer by defining ${s\choose 2}:=\frac{s(s-1)}{2}$ for all real $s$). We finally get
\begin{align*}
 \mathbb{P}_{G}(U\text{ is bad}) & \leq\sum_{k,j}{m\choose k}{n-m\choose m-k}{k\choose j}(m-j)!(1-p)^{{m-j/2-k/2\choose 2}+(k-j)j/2+(m-k)(k+j)/2}\\
 & =\sum_{k=0.99m}^{m-1} {m\choose k}{n-m\choose m-k}\sum_{j=0}^{\min\{k,m-10\}}f(j)+e^{-\Theta(pm^2)},
\end{align*}
where 
$$
f(j)={k\choose j}(m-j)!(1-p)^{{m-j/2-k/2\choose 2}+\frac{(k-j)j}{2}+\frac{(m-k)(k+j)}{2}}.
$$
Indeed, for $k<0.99m$, recalling that $p\gg\frac{\ln n}{n}$,
$$
 {m\choose k}{n-m\choose m-k}f(j)<2^n\cdot 2^k\cdot m!\cdot (1-p)^{{m-k\choose 2}}=e^{-\Theta(m^2p)}.
$$

Since
$$
 \frac{f(j+1)}{f(j)}=\frac{k-j}{(j+1)(m-j)}(1-p)^{-3j/4+k/4-1/8},
$$
and 
$$
\frac{d}{d j}\left(\ln\frac{f(j+1)}{f(j)}\right)=-\frac{1}{k-j}-\frac{1}{j+1}+\frac{1}{m-j}+\frac{3}{4}\ln\frac{1}{1-p}>0
$$
when $j\in[k/6,k/2]$, we get that $ \frac{f(j+1)}{f(j)}<1$ when $j\leq k/6$, $ \frac{f(j+1)}{f(j)}>1$ when $j\geq k/2$, and $ \frac{f(j+1)}{f(j)}$ increases in $[k/6,k/2]$. Therefore, there exists $j^*$ such that $f$ decreases when $j<j^*$ and $f$ increases when $j>j^*$.

We get that, for some constant $c>0$,
\begin{align*}
\max_j f(j) & =\max\{f(0),\min\{f(k),f(m-10)\}\}\\
&=\min\left\{(m-k)!(1-p)^{{m-k\choose 2}+k(m-k)},c{k\choose m-10}(1-p)^{5m+k(m-k)/2}\right\}+e^{-\Theta(m^2p)}
\end{align*}
since
$$
 f(0)=m!(1-p)^{\frac{(m^2-3k^2/4)-(m-k/2)}{2}}\leq m! e^{-(1/8-o(1))m^2}=e^{-\Theta(m^2p)}.
$$

Thus, 
$$
 \mathbb{P}_{G}(U\text{ is bad})  \leq
 \sum_{k=0.99m}^{m-10} g_1(k)+10c\sum_{k=m-9}^{m-1}
 g_2(k)+e^{-\omega(n\ln n)},
$$
where
\begin{align*}
g_1(k) & ={m\choose k}{n-m\choose m-k}(k+1)(m-k)!(1-p)^{{m-k\choose 2}+k(m-k)};\\
g_2(k) & ={m\choose k}{n-m\choose m-k}{k\choose m-10}(1-p)^{5m+k(m-k)/2}.
\end{align*}
As above, we find the maximum value of $g_1(k)$ and $g_2(k)$. Recalling that $k\geq 0.99m$ and $m=\Theta(n)$, we get
$$
 \frac{g_1(k+1)}{g_1(k)}=\frac{(k+2)(m-k)}{(k+1)^2(n-2m+k+1)}(1-p)^{-k}>\frac{e^{pk}}{n^3}=n^{\omega(1)}>1, \text{ and}
$$
$$
 \frac{g_2(k+1)}{g_2(k)}=\frac{(m-k)^2}{(k-m+11)(n-2m+k+1)}(1-p)^{(m-2k-1)/2}>\frac{e^{0.49mp}}{n^2}=n^{\omega(1)}>1,
$$
implying
\begin{align*}
 \mathbb{P}_{G}(U\text{ is bad}) & \leq m^{12}n^{10}(1-p)^{{10\choose 2}+10(m-10)}+10cm^{10}n(1-p)^{5m+(m-1)/2}+e^{-\omega(n\ln n)}\\
 &=(1-p)^{(11/2-o(1))m}.
\end{align*}
Substituting the obtained bound on $\mathbb{P}_{G}(U\text{ is bad})=\mathbb{P}(\mathbf{U}_m\text{ is bad})$ into~\eqref{eq:binomial_to_uniform_very_dense}, we get
\begin{align*}
 \mathbb{P}(\mathbf{U}\text{ is bad},\,|\mathbf{U}|\in[0.1 n,0.9n]) &=
 \sum_{m=0.1n}^{0.9n}{n\choose m}2^{-n}\cdot\mathbb{P}(\mathbf{U}_m\text{ is bad})\\
 &\leq \sum_{m=0}^{n} (1-p)^{(11/2-o(1))m}{n\choose m}2^{-n}\\
 &=\left(\frac{1+(1-p)^{11/2-o(1)}}{2}\right)^n\leq 2^{-2np(1-p)},
\end{align*}
due to the following claim.

\begin{claim}
$1+(1-p)^{5.4}<2^{1-2p(1-p)}$ for all $p\in(0,1/2]$.
\label{cl:boring}
\end{claim}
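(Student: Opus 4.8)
The plan is to reduce the claim to an elementary one–variable inequality by replacing the right–hand side with an affine lower bound in $p$. Writing $2^{1-2p(1-p)}=2\exp\bigl(-2p(1-p)\ln 2\bigr)$ and applying the elementary inequality $e^{-x}\ge 1-x$ (valid for all real $x$) with $x=2p(1-p)\ln 2$, one gets
$$
2^{1-2p(1-p)}\ \ge\ 2\bigl(1-2p(1-p)\ln 2\bigr)\ =\ 2-4\ln 2\cdot p(1-p).
$$
Hence it suffices to prove $1-(1-p)^{5.4}>4\ln 2\cdot p(1-p)$ for every $p\in(0,1/2]$.

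Next I would eliminate the fractional exponent. Since $0<1-p<1$ on this range, $(1-p)^{5.4}<(1-p)^{5}$, and the factorisation $1-(1-p)^{5}=p\bigl(1+(1-p)+(1-p)^2+(1-p)^3+(1-p)^4\bigr)$ lets me cancel the factor $p>0$. Setting $q:=1-p\in[1/2,1)$, the task becomes
$$
1+q+q^2+q^3+q^4\ >\ 4\ln 2\cdot q\qquad\text{for all }q\in[1/2,1).
$$
I would establish this by monotonicity: put $\phi(q):=1+q^2+q^3+q^4-(4\ln 2-1)q$, so the target is $\phi(q)>0$. Then $\phi'(q)=2q+3q^2+4q^3-(4\ln 2-1)$, and for $q\ge 1/2$ the first three terms already sum to at least $1+\tfrac{3}{4}+\tfrac{1}{2}=\tfrac{9}{4}$, which exceeds $4\ln 2-1$ (since $\ln 2<0.7$); thus $\phi$ is increasing on $[1/2,1)$, and $\phi(q)\ge\phi(1/2)=\tfrac{31}{16}-2\ln 2>0$, again because $2\ln 2<1.4<\tfrac{31}{16}$. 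This completes the proof.

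The only delicate point — and the single place the particular exponent $5.4$ is used — is that the crude affine lower bound on $2^{1-2p(1-p)}$ must retain enough strength: the claimed inequality degenerates to an equality as $p\to 0^{+}$, since both sides tend to $1$, so the comparison has to be won at first order in $p$. It is, with comfortable slack: the $p$–derivative of $1-(1-p)^{5.4}$ at $0$ equals $5.4$, well above the coefficient $4\ln 2\approx 2.77$ arising on the other side, while $e^{-x}\ge 1-x$ is tight to second order at $x=0$ and so does not erode this gap; likewise, replacing $(1-p)^{5.4}$ by $(1-p)^{5}$ only weakens $5.4$ to $5$, still comfortably above $2.77$. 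Everything after the reduction is a routine calculus check with explicit numerical slack, so I anticipate no real obstacle beyond bookkeeping of the constants.
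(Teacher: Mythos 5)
Your proof is correct, and it takes a genuinely different and somewhat simpler route than the paper's. The paper splits the range at $p=1/3$: for $p\in(1/3,1/2]$ it compares crude numerical bounds ($1+(2/3)^{5.4}<1.2<2^{1/2}$), and for $p\in(0,1/3]$ it bounds $(1-p)^{5.4}<e^{-5.4p}$ and shows the transcendental function $\gamma(p)=1+e^{-5.4p}-2\cdot 2^{-2p+2p^2}$ is decreasing by a two-level derivative sign analysis, concluding $\gamma(p)<\gamma(0)=0$. You instead handle the whole interval at once: the tangent-line bound $e^{-x}\ge 1-x$ turns the right-hand side into the affine lower bound $2-4\ln 2\cdot p(1-p)$, the replacement $(1-p)^{5.4}<(1-p)^5$ and the factorisation $1-(1-p)^5=p\sum_{i=0}^4(1-p)^i$ let you cancel the vanishing factor $p$ (which is exactly where the original inequality is tight as $p\to0^+$), and what remains is the quartic inequality $1+q+q^2+q^3+q^4>4\ln 2\,q$ on $q\in[1/2,1)$, settled by one monotonicity check and the evaluation $\phi(1/2)=31/16-2\ln 2>0$. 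All the individual steps (the strictness coming from $(1-p)^{5.4}<(1-p)^5$, the bound $2q+3q^2+4q^3\ge 9/4>4\ln2-1$, and the numerics) are valid, so the argument stands; your trade-off is that you discard some of the strength of the exponent $5.4$ (effectively any exponent above $4\ln 2\approx 2.77$ would do), which is harmless here since there is slack, whereas the paper's method works directly with $5.4$ at the cost of a case split and heavier calculus. The only blemish is in your closing commentary: as $p\to 0^+$ both sides of the original inequality tend to $2$ (and both sides of your reduced inequality tend to $0$), not to $1$; this does not affect the proof.
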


The proof of Claim~\ref{cl:boring} is straightforward and technical, thus it is moved to Appendix~\ref{app:cl_boring_proof}.

\paragraph{Upper bound.}

Due to Claim~\ref{cl}, whp there exist vertices $x,x'$ such that the set $N^+(x,x')\cup N^-(x,x')$ of vertices that are either adjacent to both $x,x'$ or non-adjacent to both $x,x'$ has cardinality $\alpha_n+\beta_n(1-o(1))$. Clearly, for every $W\subset N^+(x,x')\cup N^-(x,x')$, sets $U:=W\cup\{x\}$ and $U':=W\cup\{x'\}$ induce isomorphic subgraphs in $G$, completing the proof.\\

\begin{remark}
From the proof it immediately follows that whp, for almost all {\it non}-unique sets $U$ (i.e., such that there exists $U'\neq U$ satisfying $G[U]\cong G[U']$), there exist two vertices $x,x'$ and a set $W\subset U$ such that $|U\setminus W|\leq 10$ and any vertex of $W$ is either a common neighbour  of $x,x'$, or a common non-neighbour of $x,x'$. It is easy to see that the stronger property holds for any such pair $\{U,U'\}$: either there exist $x,x'$ such that $U=W\cup\{x\}$, $U'=W\cup\{x'\}$, where $W\subset N^+(x,x')\cup N^-(x,x')$, or there exist $x,x',x''$ such that $U=W\cup\{x,x''\}$, $U'=W\cup\{x',x''\}$, where $W\subset N^+(x,x',x'')\cup N^-(x,x',x'')$, or there exist $x_1,x'_1,x_2,x'_2$ and $W\subset [N^+(x_1,x'_1)\cup N^-(x_1,x'_1)]\cap[N^+(x_2,x'_2)\cup N^-(x_2,x'_2)]$ such that $W\subset U$ and $|U\setminus W|\leq 10$. It is also possible to show (in the same way as in the proof of the non-existence statement in Claim~\ref{cl}) that the number of sets $U$ of the second and the third type is $o\left(2^{\alpha_n}\right)$ whp. Thus, whp, for almost all non-unique sets $U$ there exist $x,x'$ such that $U\setminus\{x\}\subset N^+(x,x')\cup N^-(x,x').$
\end{remark}

\section{Proof of Theorem~\ref{th:bin_dense}, part 2}
\label{sc:proof_2}

Since we have already proved the first part of Theorem~\ref{th:bin_dense}, we may assume that $p\leq\frac{C\ln n}{n}$ for some $C>0$.

\begin{claim}
Let $U\subset[n]$ have size $(1/2+o(1))n$. Then, whp, for every subset $W\subset U$, the number of edges in $U$ incident to at least one vertex in $W$ is at least $|W|\ln\ln n$.
\end{claim}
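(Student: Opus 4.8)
The plan is to fix a set $U$ of size $(1/2+o(1))n$, and to show that with probability $1-o(1)$ the stated edge–count property holds simultaneously for all $W\subseteq U$; by Claim~\ref{cl:from_random_to_deterministic} it then suffices to establish this for a typical $U$, but since the claim is phrased for a fixed $U$ of that size we just work directly with such a $U$. Write $m:=|U|=(1/2+o(1))n$. The key reduction is that the worst case is essentially ``$W$ small and sparse'': for a given $W$ with $|W|=w$, the number $e_U(W)$ of edges of $G[U]$ meeting $W$ is at least the number of edges inside $W$ plus the number of $W$–$(U\setminus W)$ edges; the latter stochastically dominates $\mathrm{Bin}\bigl(w(m-w),p\bigr)$, which alone has mean $w(m-w)p=\Theta(w n p)$. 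Since $p\gg \ln n/n$ is not assumed here — we are in the regime $\ln n/n \lesssim p \le C\ln n/n$ after the remark opening Section~\ref{sc:proof_2}, so $np=\Theta(\ln n)$ — the target $w\ln\ln n$ is of strictly smaller order than the mean $\Theta(w\ln n)$ whenever $w\le (1-\delta)m$, so a Chernoff bound on the lower tail of $\mathrm{Bin}(w(m-w),p)$ gives a failure probability at most $\exp(-\Omega(w np))=n^{-\Omega(w)}$ for each such $W$. Actually one must be slightly careful: we also need $w$ itself not too large relative to $m$, but if $w\ge m/2$ the count $e_U(W)\ge e(G[U])\cdot(\text{something})$... rather, more simply, when $w$ is a constant fraction of $m$ the bound $w\ln\ln n \ll w(m-w)p$ still holds since $m-w=\Theta(n)$; the only genuinely delicate range is $w$ close to $m$, where $m-w$ is small, but there every vertex of $U\setminus W$ is incident to $\Theta(np)=\Theta(\ln n)$ edges of $G[U]$ whp, so $e_U(W)\ge e(G[U])-|U\setminus W|\cdot\Theta(\ln n)\ge \Theta(m\ln n)-o(m\ln n)\ge w\ln\ln n$. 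So split into $w\le m/2$ and $w> m/2$.

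For the main range $1\le w\le m/2$, apply a union bound over the $\binom{m}{w}\le \bigl(\tfrac{em}{w}\bigr)^{w}$ choices of $W$ of size $w$. Each contributes failure probability at most $\exp(-c\, w\, np)=\exp(-c' w\ln n)$ for an absolute constant $c'>0$ (using $np\ge \ln n$), so the total over a fixed $w$ is at most $\bigl(\tfrac{em}{w}\bigr)^{w}e^{-c'w\ln n}\le \bigl(em\cdot n^{-c'}\bigr)^{w}=o(1)$ term-by-term once $c'>1$; summing over $w$ from $1$ to $m/2$ is a convergent geometric-type sum and stays $o(1)$. If $c'\le 1$ we instead use the slightly better Chernoff estimate: the target $w\ln\ln n$ is only a $(\ln\ln n/\ln n)$–fraction of the mean, so the lower-tail bound is $\exp(-(1-o(1))w(m-w)p)$, i.e. the exponent is $(1-o(1))$ times the full mean, which beats $w\ln\binom{m}{w}/w=w\ln(em/w)=O(w\log n)$ with huge room to spare; so in fact the union bound closes comfortably regardless of constants. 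For $w>m/2$, instead union-bound over the $\binom{m}{m-w}$ choices of the (small) complement $U\setminus W$, and use the deterministic reduction above together with a Chernoff bound stating that whp every vertex of $U$ has $G[U]$–degree in $[(1-o(1))np,(1+o(1))np]$ and $e(G[U])=(1+o(1))\binom{m}{2}p$.

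The main obstacle I anticipate is not any single estimate but bookkeeping the quantifier ``for every $W\subseteq U$'' cleanly across the whole range of $|W|$ while keeping the union-bound exponents under control; concretely, making sure that in the regime $w=\Theta(m)$ (but bounded away from $m$) the comparison $w\ln\ln n\ll w(m-w)p$ is uniform, and that in the regime $w\to m$ the switch to bounding via maximum degree / total edge count of $G[U]$ is valid whp. Everything else is a routine Chernoff-plus-union-bound argument, and the slack between $\ln\ln n$ and $np\ge\ln n$ is enormous, so there is a lot of room. One clean way to organize it: prove the two ``global'' facts first (every vertex of $U$ has degree $(1\pm o(1))np$ in $G[U]$; $e(G[U])=(1\pm o(1))\binom m2 p$ — both immediate from Chernoff), condition on them, and then the small-$w$ case is handled by the union bound while the large-$w$ case is handled deterministically on that event.
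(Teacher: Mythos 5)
There is a genuine gap, and it lies in the parameter regime you assign to this claim and, consequently, in the small-$|W|$ case of your union bound. The claim is used for part 2 of Theorem~\ref{th:bin_dense}, where $(2+\varepsilon)\frac{\ln n}{n}\le p\le\frac{C\ln n}{n}$; it is \emph{not} true in the wider range you allow yourself (``$\ln n/n\lesssim p$'', ``using $np\ge\ln n$'', ``regardless of constants''). Indeed, if $p\le(2-\varepsilon)\frac{\ln n}{n}$ and $m=|U|=(1/2+o(1))n$, then $mp\le(1-\varepsilon/2+o(1))\ln n$, so the expected number of vertices of $U$ with no neighbour inside $U$ is $m(1-p)^{m-1}=n^{\Omega(\varepsilon)}$ and whp such a vertex exists (this is exactly the phenomenon exploited in Claim~\ref{cl:two_isolated_vertices} for part 3); taking $W$ to be that single vertex gives zero incident edges, violating the conclusion. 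So no proof can get away with only $np\ge\ln n$, and any correct argument must use the factor $2+\varepsilon$.

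Quantitatively, the same issue shows your estimate does not close ``with huge room to spare'' when $w=|W|$ is small: for $w=O(1)$ the entropy term is $\ln\binom{m}{w}\approx w\ln n$, while the full mean of your dominating binomial is $w(m-w)p=(1+o(1))\,w\,mp$, and $mp\ge(1+\varepsilon/2+o(1))\ln n$ holds only because $p\ge(2+\varepsilon)\ln n/n$ and $m\approx n/2$. The margin is therefore $\Theta(\varepsilon)\,w\ln n$, i.e.\ a factor $n^{-\Theta(\varepsilon w)}$ in the union bound --- already at $w=1$ you win only by $n^{-\Theta(\varepsilon)}$, and under your stated hypothesis $np\ge\ln n$ the mean would be about $\tfrac12 w\ln n<w\ln n$ and both the union bound and the statement itself fail. (Relatedly, the crude replacement $m-w\ge m/2$ must not be used for small $w$, or you lose a further factor $2$ and the computation breaks even with the correct hypothesis; you need $m-w=(1-o(1))m$ there.) With the correct regime and this bookkeeping your union bound over all subsets $W$ can be pushed through, and the large-$w$ reduction via total edge count and maximum degree is fine, but the paper's route is much lighter: it only proves, by a union bound over the $m$ vertices (again hinging on $mp\ge(1+\varepsilon/2+o(1))\ln n$), that whp every vertex of $U$ has at least $\varepsilon^2\ln n$ neighbours in $U$; then deterministically every $W$ meets at least $|W|\varepsilon^2\ln n/2\ge|W|\ln\ln n$ edges, with no union bound over subsets at all.
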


\begin{proof}
The assertion immediately follows from the fact that whp every vertex in $U$ has at least $\varepsilon^2\ln n$ neighbours in $U$. To prove the latter, let $|U|=m$.
\begin{align*}
 \mathbb{P}\left(\exists x\in U:\,|N(x)\cap U|<\varepsilon^2\ln n\right) &\leq m\mathbb{P}\left(\mathrm{Bin}(m-1,p)<\varepsilon^2\ln n\right)\\
 &=m\sum_{i<\varepsilon^2\ln n}{m\choose i}p^i(1-p)^{m-1-i}\\
&\stackrel{(*)}\leq m\ln n \left(\frac{emp}{\varepsilon^2\ln n}\right)^{\varepsilon^2\ln n} e^{-p(m-\ln n)}\\
&\stackrel{(**)}\leq m\cdot \exp\left[-\left(1+\varepsilon/2+o(1)-\varepsilon^2\ln\left(\frac{(1+\varepsilon/2)e}{\varepsilon^2}\right)\right)\ln n\right]\\
&\leq m\cdot n^{-1-\varepsilon/4}=o(1)
\end{align*}
for small enough $\varepsilon$. The inequality (*) follows from the fact that ${m\choose i}p^i$ increases in $i$. The inequality (**) follows from the fact that $ p^{\varepsilon^2\ln n} e^{-pm}$ decreases in $p$.
\end{proof}

Let us call a set $U\subset[n]$ {\it unique}, if there is no other subset $U'\subset[n]$ such that $G[U]$ and $G[U']$ are isomorphic.

Let $\mathbf{U}\subset[n]$ be a uniformly random subset. It suffices to prove that $\mathbb{P}(\mathbf{U}\text{ is non-unique})=o(1)$. Indeed, assuming that, for some $\delta>0$ and for infinitely many $n$, 
$$
\mathbb{P}\left(\text{the number of non-unique sets is more than } \delta 2^{n}\right)>\delta,
$$
we get that 
$$
\mathbb{P}(\mathbf{U}\text{ is non-unique})>\delta\frac{\delta 2^n}{2^n}=\delta^2
$$
 --- a contradiction. 
 
Let $m\in\mathcal{J}_n$, $U\in{[n]\choose m}$. Due to Claim~\ref{cl:from_random_to_deterministic}, it suffices to prove that $\mathbb{P}_G(U\text{ is non-unique})=o(1)$ uniformly over $m$ and $U$. Arguing in the same way as in the proof of part 1 of Theorem~\ref{th:bin_dense} and using the fact that whp, for every subset $W\subset U$, the number of edges in $E(G[U])\setminus E(G[W])$ is at least $|W|\ln\ln n$, we get
\begin{align*}
 \mathbb{P}_{G}(U\text{ is non-unique})=o(1)+\sum_{k=\max\{0,2m-n\}}^{m-1} \sum_{j=0}^{k} g(k,j) = o(1)+
 \sum_{j=0}^{m-1}\sum_{k=\max\{j,2m-n\}}^{m-1}  g(k,j) 
\end{align*}
where 
$$
 g(k,j)= {m\choose k}{n-m\choose m-k} {k\choose j}(m-j)!(1-p)^{{m-j/2-k/2\choose 2}+\frac{(k-j)j}{2}+\frac{(m-k)(k+j)}{2}}\left(\frac{p}{1-p}\right)^{\left(m-\frac{j+k}{2}\right)\ln\ln n}.
$$
For a fixed $j$,
$$
 \frac{g(k+1,j)}{g(k,j)}=\frac{(m-k)^2}{(n-2m+k+1)(k+1-j)}(1-p)^{\frac{j-3k-1/2}{4}}\left(\frac{1-p}{p}\right)^{\ln\ln n/2}>\frac{p^{-\ln\ln n/2}}{n^2}>1.
$$
Thus, for every $j$ and every $k$, $g(k,j)\leq g(m-1,j)$. Then
\begin{align*}
 \mathbb{P}_{G}(U\text{ is non-unique})=o(1)+ n\sum_{j=0}^{m-1}g(m-1,j) .
\end{align*}
Now, observe that, for all $j$,
\begin{align*}
 \frac{g(m-1,j+1)}{g(m-1,j)} &=\frac{m-1-j}{(j+1)(m-j)}(1-p)^{-3j/4+m/4-5/8}\left(\frac{1-p}{p}\right)^{\ln\ln n/2}\\
 &>\frac{(1-p)^{m/4}}{n^2}p^{-\ln\ln n/2}>n^{-C+\ln\ln n/2}>1.
\end{align*}
 We finally get
\begin{align*}
 \mathbb{P}_{G}(U\text{ is non-unique}) & \leq o(1)+n^2g(m-1,m-1)\\
 &\leq o(1)+n^2(m-1)(n-m)(1-p)^{m-1-\ln\ln n} p^{\ln\ln n}\\
 &\leq o(1)+n^4 p^{\ln\ln n}=o(1),
\end{align*}
completing the proof.

\section{Proof of Theorem~\ref{th:bin_dense}, part 3}
\label{sc:proof_3}

Let us call a set $U\subset[n]$ {\it bad}, if there exist at least $\ln n$ sets $U'\subset[n]$ such that $G[U]$ and $G[U']$ are isomorphic.

Let $\mathbf{U}\subset[n]$ be a uniformly random subset. It suffices to prove that $\mathbb{P}(\mathbf{U}\text{ is bad})=1-o(1)$. Indeed, assume that with probability at least $\delta>0$ the number of non-isomorphic induced graphs is at least $\delta\cdot 2^n$. Then
$$
 \mathbb{P}(\mathbf{U}\text{ is not bad})\geq\delta\cdot\frac{\delta\cdot 2^n-2^n/\ln n}{2^n}=\delta^2-o(1) 
$$
--- a contradiction.

Let $m\in\mathcal{J}_n$, $U\in{[n]\choose m}$. Due to Claim~\ref{cl:from_random_to_deterministic}, it suffices to prove that $\mathbb{P}_G(U\text{ is bad})=1-o(1)$ uniformly over $m$ and $U$. The following claim completes the proof.


Let $\eta$ be the number of vertices from $U$ that do not have neighbours in $U$ and let $\eta'$ be the number of vertices outside $U$ that do not have neighbours in $U$.

\begin{claim}
\label{cl:two_isolated_vertices}
Whp $\eta\geq 1$ and $\eta'\geq\ln n$.
\end{claim}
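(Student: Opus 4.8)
The plan is to handle $\eta$ and $\eta'$ by elementary moment computations, the key point being that both have mean that is polynomial in $n$, hence in particular $\omega(\ln n)$. So first I would pin down the means. A fixed vertex of $U$ has no neighbour in $U$ with probability $(1-p)^{m-1}$, and since $|U|=m\in\mathcal J_n$ and $0\le p\le(2-\varepsilon)\tfrac{\ln n}{n}$ we have $m\ln(1-p)=-mp-O(mp^2)=-(1-\varepsilon/2)\ln n+o(1)$, uniformly over the admissible $p$ (use that $(1-p)^m$ is decreasing in $p$ together with $mp^2=O(\ln^2 n/n)=o(1)$). Hence $(1-p)^{m-1}=n^{-1+\varepsilon/2+o(1)}$, so $\EXP\eta=m(1-p)^{m-1}=\tfrac12 n^{\varepsilon/2+o(1)}\to\infty$, and likewise a vertex outside $U$ has no neighbour in $U$ with probability $(1-p)^m$, giving $\EXP\eta'=(n-m)(1-p)^m=\tfrac12 n^{\varepsilon/2+o(1)}\to\infty$. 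In particular both means exceed $2\ln n$ for $n$ large.

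For $\eta'$ the conclusion is then immediate: for distinct $x,y\notin U$ the events ``$x$ has no neighbour in $U$'' and ``$y$ has no neighbour in $U$'' depend on disjoint sets of potential edges and are therefore independent, so $\eta'\sim\mathrm{Bin}(n-m,(1-p)^m)$. The Chernoff bound (e.g.\ \cite[Theorem 2.1]{Janson}) gives $\PR\!\left(\eta'<\tfrac12\EXP\eta'\right)\le e^{-\EXP\eta'/8}=o(1)$, and since $\tfrac12\EXP\eta'\ge\ln n$ eventually, whp $\eta'\ge\ln n$.

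For $\eta$ the summands are no longer independent, since the events $A_x=\{x\text{ has no neighbour in }U\}$ and $A_y$ share the potential edge $xy$; but the dependence is harmless. One computes $\PR(A_x\cap A_y)=(1-p)^{2m-3}$, so that $\PR(A_x\cap A_y)-\PR(A_x)\PR(A_y)=p(1-p)^{2m-3}$, whence
\[
\mathrm{Var}\,\eta\ \le\ \EXP\eta+\sum_{x\ne y\in U}p(1-p)^{2m-3}\ \le\ \EXP\eta+\frac{p}{1-p}\,(\EXP\eta)^2\ =\ o\big((\EXP\eta)^2\big),
\]
using $p=o(1)$ together with $\EXP\eta\to\infty$. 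Chebyshev's inequality then yields $\eta\ge1$ whp (in fact $\eta\to\infty$ whp). Equivalently, one may simply invoke the classical fact that $G[U]\sim G(m,p)$ has an isolated vertex whp, which applies because $p\le(1-\varepsilon/2+o(1))\tfrac{\ln m}{m}$ lies below the connectivity threshold. A union bound over the two high-probability events finishes the proof.

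There is no serious obstacle here: the argument is a routine first/second moment computation. The only point that needs a little care is the uniformity of the mean estimate over all $m\in\mathcal J_n$ and over the entire range $p\le(2-\varepsilon)\ln n/n$ — including very small $p$, where $\eta$ and $\eta'$ are only larger and the estimates are even more comfortable — and keeping track of the (negligible) correlation between the indicators making up $\eta$.
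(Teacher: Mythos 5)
Your proposal is correct and follows essentially the same route as the paper: $\eta'\sim\mathrm{Bin}(n-m,(1-p)^m)$ with mean $n^{\varepsilon/2-o(1)}$ plus concentration, and the classical isolated-vertex fact for $G[U]\sim G(m,p)$ below the connectivity threshold for $\eta\geq 1$. The only difference is that you additionally spell out a self-contained second-moment/Chebyshev computation for $\eta$, which the paper simply cites.
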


Indeed, let $x\in U,x'\notin U$ be vertices that do not have neighbours in $U$. Then $G[U\cup\{x\}\setminus\{x'\}]\cong G[U]$. In this way, we can get at least $\ln n$ sets that induce graphs isomorphic to $G[U]$.

\begin{proof}[Proof of Claim~\ref{cl:two_isolated_vertices}.] Since $G[U]\sim G(|U|,p)$, the fact that whp $\eta\geq 1$ is known (see, e.g.,~\cite[Theorem 3.5]{Bol}).  Let $m=|U|$. Then $\eta'\sim\mathrm{Bin}(n-m,(1-p)^m)$. Therefore, 
$$
\mathbb{E}\eta'=(n-m)(1-p)^m\geq
\frac{n}{2}e^{-(1-\varepsilon/2+o(1))\ln n}=n^{\varepsilon/2-o(1)}.
$$
The claim follows from the fact that a binomial random variable with growing expectation is concentrated around its expectation.
\end{proof}

\section{Proof of Theorem~\ref{th:bin_dense}, part 4}
\label{sc:proof_4}

Without loss of generality, we assume that $\varepsilon>0$ is small enough. Let us call a set $U\subset[n]$ {\it bad}, if there exist at least $2^{\varepsilon n}$ sets $U'\subset[n]$ such that $G[U]$ and $G[U']$ are isomorphic.

Let $\mathbf{U}\subset[n]$ be a uniformly random subset. It suffices to prove that $\mathbb{P}(\mathbf{U}\text{ is bad})=o(1)$. Indeed, assume that with probability at least $\delta>0$ the number of non-isomorphic induced graphs is at most $\delta 2^{(1-\varepsilon)n}$. Then
$$
 \mathbb{P}(\mathbf{U}\text{ is bad})\geq\delta\cdot\frac{2^n-\delta 2^{(1-\varepsilon)n} 2^{\varepsilon n}}{2^n}=\delta(1-\delta) 
$$
--- a contradiction.

Let $m\in\mathcal{J}_n$, $U\in{[n]\choose m}$. Due to Claim~\ref{cl:from_random_to_deterministic}, it suffices to prove that $\mathbb{P}_{G}(U\text{ is bad})=o(1)$ uniformly over $m$ and $U$. 


\begin{claim}
Whp every subset $W\subset U$ of size at least $\varepsilon^2 n$ spans the number of edges satisfying $||E(G[W])|-\mathbb{E}[|E(G[W])|]|\leq\frac{1}{2}\mathbb{E}[|E(G[W])|]$.
\label{cl:concentrated_subgraphs}
\end{claim}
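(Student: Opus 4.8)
The plan is the standard Chernoff-plus-union-bound argument, whose only delicate point is matching the deviation savings against the entropy cost of the union bound; this is exactly where the freedom to take $C=C(\varepsilon)$ large in part~4 is used. Fix a set $W\subseteq U$ and write $w:=|W|$, assuming $w\geq\varepsilon^2 n$. Since $G\sim G(n,p)$, we have $|E(G[W])|\sim\mathrm{Bin}\!\left({w\choose 2},p\right)$, so
$$
\mu_w:=\mathbb{E}\big[|E(G[W])|\big]={w\choose 2}p.
$$
Because ${w\choose 2}$ is increasing in $w$ and $p\geq C/n$ in the regime under consideration, we get the uniform lower bound
$$
\mu_w\geq{\lceil\varepsilon^2 n\rceil\choose 2}p\geq(1-o(1))\,\frac{\varepsilon^4 C}{2}\,n
$$
for every admissible $W$; in particular $\mu_w=\Theta_{\varepsilon,C}(n)$.

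Next I would apply the Chernoff bound (\cite[Theorem~2.1]{Janson}) with relative deviation $\tfrac12$: for each fixed $W$ with $|W|\geq\varepsilon^2 n$,
$$
\mathbb{P}\!\left(\big||E(G[W])|-\mu_w\big|>\tfrac12\mu_w\right)\leq 2\exp\!\left(-\frac{\mu_w}{12}\right)\leq 2\exp\!\left(-(1-o(1))\frac{\varepsilon^4 C}{24}\,n\right).
$$
Taking a union bound over all subsets $W\subseteq U$ — there are at most $2^m\leq 2^n$ of them, and for those with $|W|<\varepsilon^2 n$ the claim is vacuous — the probability that the asserted concentration fails for some such $W$ is at most
$$
2^{n+1}\exp\!\left(-(1-o(1))\frac{\varepsilon^4 C}{24}\,n\right),
$$
which is $o(1)$ as soon as $\frac{\varepsilon^4 C}{24}>\ln 2$. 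Since part~4 only requires the statement for $p\geq C/n$ with $C=C(\varepsilon)$ chosen (as large as we like) in terms of $\varepsilon$, this condition can be arranged, e.g. $C:=\lceil 24/\varepsilon^4\rceil$ suffices, and the claim follows.

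I do not expect a genuine obstacle here: the argument is entirely routine. The one thing to keep an eye on is that the Chernoff saving, which is of order $\varepsilon^4 C n$, must beat the entropy term $n\ln 2$ coming from the $2^n$ subsets of $U$; this is the sole reason the threshold constant $C$ must be allowed to depend on $\varepsilon$, and it is consistent with the hypothesis $p\in[C/n,1/2]$ of part~4. (Note also that for larger $p$ up to $1/2$ the mean $\mu_w$ only grows, so the same bound applies a fortiori.)
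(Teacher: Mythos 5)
Your proposal is correct and follows essentially the same route as the paper: a Chernoff bound for $\mathrm{Bin}\!\left(\binom{|W|}{2},p\right)$ with relative deviation $\tfrac12$, giving a failure probability $e^{-\Theta(C\varepsilon^4 n)}$ per set, beaten against the $2^n$ union bound by choosing $C=C(\varepsilon)$ large. The paper's proof is exactly this computation, so there is nothing to add.
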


\begin{proof}
Let $W\subset U$ have size $y\geq\varepsilon^2 n$. It induces $|E(G[W])|\sim\mathrm{Bin}({y\choose 2},p)$ edges with 
$$
\mathbb{E}[|E(G[W])|]={y\choose 2}p\sim\frac{y^2}{2}p>\frac{C\varepsilon^2}{2} y.
$$
By the Chernoff bound, 
$$
\mathbb{P}\left(||E(G[W])|-\mathbb{E}[|E(G[W])|]|>\frac{1}{2}\mathbb{E}[|E(G[W])|]\right)\leq e^{-10y/\varepsilon^2}
$$
for large enough $C=C(\varepsilon)$. By the union bound,
$$
 \mathbb{P}\left(\exists W\,\, ||E(G[W])|-\mathbb{E}[|E(G[W])|]|>\frac{1}{2}\mathbb{E}[|E(G[W])|]\right)\leq 2^n e^{-10n}=o(1).
$$
\end{proof}


  Consider the family $\mathcal{F}_m$ of all sets of size $m$ that have subgraphs with the number of edges concentrated as in Claim~\ref{cl:concentrated_subgraphs}. We know that whp $U\in\mathcal{F}_m$. Let $\mathcal{U}(U)$ be the family of all $m$-subsets of $[n]$ that have less than $m-\varepsilon^2 n$ common vertices with $U$. Then if $U\in\mathcal{F}_m$, for every $U'\in\mathcal{U}(U)$ such that $G[U']\cong G[U]$, there should exist an isomorphism $\varphi: U\to U'$ that moves all edges from $E(G[U\setminus U'])$. Since $|U\setminus U'|\geq\varepsilon^2n$ and $U\in\mathcal{F}_m$, there are at least $\left(\frac{1}{4}-o(1)\right)\varepsilon^4 n^2p$ edges moved my $\varphi$. Then
\begin{align}
 \mathbb{P}\biggl(\exists U'\in\mathcal{U}(U)\,\, G[U]\cong G[U']\biggr)
 &\leq \mathbb{P}(U\notin\mathcal{F}_m)+2^n n! p^{(\frac{1}{4}-o(1))\varepsilon^4 n^2p}\notag\\
 &\leq o(1)+\exp\left (n\ln n-\left(\frac{1}{4}-o(1)\right)\varepsilon^4 Cn\ln n\right)=o(1),
 \label{eq:binomial_p4_UU'}
\end{align}
for large enough $C=C(\varepsilon)$.
Therefore, whp all subsets $U'\subset[n]$ such that $G[U]\cong G[U']$ have at least $m-\varepsilon^2n$ common vertices with $U$. Thus, whp the number of such sets is at most
$$
 \sum_{k\geq m-\varepsilon^2 n}{m\choose k}\leq n{m\choose\lceil m-\varepsilon^2 n\rceil}<2^{\varepsilon n},
$$
completing the proof.

\section{Proof of Theorem~\ref{th:bin_dense}, part 5}
\label{sc:trees}

In Section~\ref{sc:GW}, we prove that the expected number of non-isomorphic subtrees in a $\mathrm{Pois}(1-\varepsilon)$--Galton--Watson tree equals $\exp(\Omega(1/\varepsilon))$. In Section~\ref{sc:anatomy}, we recall the structure of the automorphism group of $G(n,p)$ as well as the contiguous model of the giant component due to Ding, Lubetzky, and Peres~\cite{DLP}. We then combine these results in Section~\ref{sc:part5_completing} and prove the lower bound from part 5 of Theorem~\ref{th:bin_dense}. The upper bound is much more straightforward, as explained in Section~\ref{sc:Intro}, and its proof is presented in Section~\ref{sc:part5_upper}.


\subsection{Galton--Watson trees}
\label{sc:GW}

For a rooted tree $T$ with root $R$, let $f(T)$ be the number of non-isomorphic subtrees of $T$ rooted in $R$. For a vertex $v$ of $T$, let $T_v$ be the subtree of $T$ induced by all descendants of $v$, including $v$ itself. For technical reasons, it will be more convenient to work with $f_+(T):=f(T)+1$. Observe that if $R$ has children $v_1,\ldots,v_j$, then 
$$
 f_+(T)\geq\frac{\prod_{i=1}^j f_+(T_{v_i})}{j!}+1.
$$
Indeed, if, for every $i\in[j]$, we choose a pair of (not necessarily non-empty) non-isomorphic trees $T_i\subset T_{v_i}$, $T'_i\subset T_{v_i}$ rooted in $v_i$, then the rooted trees $T_0$ and $T'_0$ obtained by adding edges between $R$ and $v_i$ in every $T_i$ and $T'_i$ respectively are isomorphic if and only if the multisets of unlabelled trees $\{\!\!\{T_i,i\in[j]\}\!\!\}$ and $\{\!\!\{T'_i,i\in[j]\}\!\!\}$ coincide.  

Let $\mathbf{T}$ be a rooted Galton--Watson tree with offspring distribution $\mathrm{Pois}(1-\varepsilon)$. 
\begin{claim}
For all small enough $\varepsilon>0$, $\mathbb{E}\ln f(\mathbf{T})>\frac{0.003}{\varepsilon}$. 
\label{cl:GW-main}
\end{claim}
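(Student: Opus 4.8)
The plan is to exploit the recursive inequality $f_+(T)\ge \frac{1}{j!}\prod_{i=1}^j f_+(T_{v_i})+1$ together with the branching structure of $\mathbf{T}$ to set up a fixed-point-type estimate for $\mathbb{E}\ln f_+(\mathbf{T})$. Write $\mu:=\mathbb{E}\ln f_+(\mathbf{T})$ (this is well-defined, possibly $+\infty$ a priori, but we will see it is finite and $\Theta(1/\varepsilon)$). Conditioning on the number of children $j\sim\mathrm{Pois}(1-\varepsilon)$, the subtrees $T_{v_1},\dots,T_{v_j}$ are i.i.d.\ copies of $\mathbf{T}$, so from the recursion and the crude bound $\ln(a+1)\ge \ln a$ for $a\ge 1$ we get
\begin{equation*}
\mu \;=\; \mathbb{E}\ln f_+(\mathbf{T}) \;\ge\; \mathbb{E}\Bigl[\ln\Bigl(\tfrac{1}{j!}\textstyle\prod_{i=1}^j f_+(T_{v_i})\Bigr)\Bigr] \;=\; (1-\varepsilon)\mu \;-\; \mathbb{E}[\ln j!].
\end{equation*}
This already gives $\varepsilon\mu \ge -\mathbb{E}[\ln j!]$, which is the wrong direction — the factor $j!$ is a genuine loss. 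The point of the argument must therefore be that when $j$ is small (say $j\le 2$) the $j!$ penalty is negligible ($\ln j!=0$ for $j\in\{0,1\}$, $\ln 2$ for $j=2$), and $j$ is small with probability bounded below by a constant since $1-\varepsilon<1$; the rare large-$j$ events contribute a bounded loss.

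The cleaner route is to restrict the recursion to a favourable event and iterate. First I would isolate one ``productive'' step: there is an absolute constant $\delta>0$ (coming from $\mathbb{P}(j=2)\ge \tfrac{(1-\varepsilon)^2}{2}e^{-(1-\varepsilon)}\ge \delta$ for small $\varepsilon$, together with the fact that a $\mathrm{Pois}(1-\varepsilon)$ tree survives — equivalently $f(\mathbf{T})\ge 2$ — with probability $\Theta(\varepsilon)$... ) that lets one show $f(\mathbf{T})$ can double at the root with the right frequency. Concretely, I would track the process down a spine: condition on $\mathbf{T}$ being infinite (probability $\rho=\rho(\varepsilon)=\Theta(\varepsilon)$ is the wrong scaling, so instead) — better, I would not condition on survival but directly estimate $\mathbb{E}\ln f_+$ by unrolling the recursion to depth $h=h(\varepsilon)\approx c/\varepsilon$. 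At each level the expected population multiplies by $1-\varepsilon$, so after $h$ levels the expected number of vertices is $(1-\varepsilon)^h\asymp e^{-ch}$, and choosing $h=\Theta(1/\varepsilon)$ keeps this $\Theta(1)$, meaning with constant probability the tree reaches depth $h$ and has $\Theta(1)$ vertices at that depth but also a reasonable chance of having, along the way, many vertices with exactly two children. Each such ``binary'' vertex multiplies $f_+$ by a factor $\ge \tfrac12 f_+(T_{v_1})f_+(T_{v_2})+1\ge \tfrac{something}{}$... — the honest statement is that a binary vertex both of whose subtrees contain at least one edge contributes a multiplicative factor of at least $2$ to $f$ (two choices: include or exclude one pendant edge asymmetrically), while not destroying the others.

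So the real engine is: (i) show that conditioned on survival to depth $h$, the number of vertices $v$ at depth $<h$ that have exactly two children, each of whose subtrees is nonempty, is $\Omega(h)=\Omega(1/\varepsilon)$ with probability bounded away from $0$ — this is a standard branching-process/large-deviations estimate since each vertex independently has this property with constant probability and the surviving population does not die out too fast; (ii) show each such vertex multiplies $f$ by at least $2$, so $\ln f(\mathbf{T}) \ge (\text{number of such vertices})\cdot \ln 2$ on this event; (iii) conclude $\mathbb{E}\ln f(\mathbf{T}) \ge (\text{const})\cdot \tfrac1\varepsilon \cdot (\text{const})$. I would package (i) via a second-moment or Azuma argument on the Galton--Watson tree, or more simply by comparison with a supercritical sub-process obtained by keeping only children in a ``good'' configuration, whose mean offspring is $(1-\varepsilon)(1-o(1))$, hence still barely subcritical — which is a problem. \emph{This is the crux and the main obstacle}: at offspring mean $1-\varepsilon$ the tree is subcritical, so it dies almost surely, and $f(\mathbf{T})=1$ (i.e.\ $\ln f=0$) on the event of quick extinction, which has probability $1-\Theta(\varepsilon)$. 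Hence the whole contribution to $\mathbb{E}\ln f$ comes from the $\Theta(\varepsilon)$-probability event that the tree is ``large'' (total size $\gtrsim 1/\varepsilon$, which is the typical size of a large subcritical tree conditioned on being large), and on that event one must extract $\ln f = \Omega(1/\varepsilon)$, i.e.\ a number of binary branchings proportional to the size. I would do this by conditioning on the total progeny $N$ of $\mathbf{T}$: the tree conditioned on $|{\mathbf{T}}|=N$ is a uniform random plane-ish tree with the appropriate structure, and for $N\asymp 1/\varepsilon$ one shows it has $\Omega(N)$ binary vertices with two nonempty subtrees with probability $\Omega(1)$; combined with $\mathbb{P}(|\mathbf{T}|\asymp 1/\varepsilon)=\Theta(\varepsilon)\cdot\Theta(1)$ (the subcritical progeny distribution has a power-law-with-exponential-cutoff tail at scale $1/\varepsilon^2$, and in particular puts mass $\Omega(\varepsilon)$ on $[c_1/\varepsilon, c_2/\varepsilon]$), we get $\mathbb{E}\ln f(\mathbf{T}) \ge \Omega(\varepsilon)\cdot\Omega(1/\varepsilon) = \Omega(1)$ — \emph{which is too weak by a factor $1/\varepsilon$}.

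Therefore the correct scaling must come from the event $|\mathbf{T}| \asymp 1/\varepsilon^2$, which has probability $\Theta(\varepsilon)$ as well (the subcritical total-progeny tail is $\mathbb{P}(|\mathbf{T}|\ge t)\asymp \varepsilon^{-1} t^{-1/2} e^{-\Theta(\varepsilon^2 t)}$ for $t$ up to $\asymp\varepsilon^{-2}$), on which the tree has $\Omega(1/\varepsilon^2)$ vertices and hence $\Omega(1/\varepsilon^2)$ binary branchings... giving $\mathbb{E}\ln f \ge \Theta(\varepsilon)\cdot\Theta(1/\varepsilon^2)=\Theta(1/\varepsilon)$, the claimed bound. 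So the plan, in order, is: (1) record the recursion $f_+(T)\ge \frac1{j!}\prod f_+(T_{v_i})+1$ and the elementary consequence that a vertex with $\ge 2$ children having $\ge 2$ nonempty subtrees multiplies $f$ by $\ge 2$ while the contributions from disjoint subtrees multiply; (2) reduce to estimating, on the event that the Galton--Watson total progeny lies in a window $[c_1\varepsilon^{-2}, c_2\varepsilon^{-2}]$, the number of ``good binary'' vertices, and show it is $\Omega(\varepsilon^{-2})$ with probability $\Omega(1)$ conditionally — via the spinal decomposition or a direct count of vertices with exactly two children in a conditioned subcritical tree of that size (each vertex is good with probability $\ge$ const independently enough); (3) estimate $\mathbb{P}(|\mathbf{T}|\in[c_1\varepsilon^{-2},c_2\varepsilon^{-2}]) = \Theta(\varepsilon)$ using the classical local/integral limit theorem for subcritical Galton--Watson total progeny (equivalently Dwass's formula $\mathbb{P}(|\mathbf{T}|=N)=\frac1N\mathbb{P}(\mathrm{Pois}(N(1-\varepsilon))=N-1)$ and Stirling); (4) multiply: $\mathbb{E}\ln f(\mathbf{T}) \ge \ln 2 \cdot \Omega(\varepsilon^{-2}) \cdot \Omega(1) \cdot \Theta(\varepsilon) = \Omega(\varepsilon^{-1})$, and chase the absolute constants to beat $0.003/\varepsilon$. \textbf{The main obstacle} is step (2)–(3): one needs both that the conditioned tree of size $\asymp \varepsilon^{-2}$ genuinely has a positive density of degree-exactly-$2$ vertices each with two surviving subtrees (not merely many leaves), and that the unconditioned probability of hitting that size window is exactly of order $\varepsilon$ and not smaller — getting these two $\Theta$'s simultaneously, with explicit enough constants, is where the work lies; everything else (the recursion, the multiplicativity over disjoint subtrees, the reduction via $f_+$) is bookkeeping.
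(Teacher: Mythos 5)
Your proposal does not reach a proof, and the most concrete problem is the deterministic lemma your whole scheme rests on. In step (ii) you assert that every vertex with two children whose subtrees are nonempty multiplies $f$ by at least $2$, so that $\ln f(\mathbf{T})\ge (\#\text{such vertices})\cdot\ln 2$. This is false as stated: for the complete binary tree of depth $h$ one has $f_{h+1}=\binom{f_h+2}{2}$, which gives $\ln f_h\approx 0.44\cdot m_h$ where $m_h=2^{h-1}-1$ is the number of branching vertices, strictly below $m_h\ln 2$ already at depth $5$ ($f_5=2278<2^{15}$). The reason is exactly the $j!$ (isomorphism-collapse) loss in the recursion $f_+(T)\ge \prod_i f_+(T_{v_i})/j!+1$: when sibling subtrees are isomorphic, choices made in them are identified. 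A weaker bound $\ln f\ge c\,m$ for some universal $c>0$ is plausibly true, but it needs an induction that controls this collapse, and you do not supply it. The other two pillars are also only sketched: the claim that a subcritical tree conditioned on total progeny $\asymp\varepsilon^{-2}$ has a positive density of branching vertices with conditional probability $\Omega(1)$, and the local-limit estimate for the window $\mathbb{P}(|\mathbf{T}|\in[c_1\varepsilon^{-2},c_2\varepsilon^{-2}])=\Theta(\varepsilon)$ (your displayed tail $\mathbb{P}(|\mathbf{T}|\ge t)\asymp\varepsilon^{-1}t^{-1/2}e^{-\Theta(\varepsilon^2t)}$ is off by a factor $\varepsilon^{-1}$, and the earlier aside that the tree ``survives with probability $\Theta(\varepsilon)$'' is wrong for a subcritical process, though the window probability you actually use is of the right order). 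Finally, the claim to be proved has an explicit constant $0.003/\varepsilon$; with your route the numerical bookkeeping (per-vertex rate times branching density times window probability) is left entirely to ``chase the constants'', and it is not clear it would clear that bar.

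It is worth noting that the dead end you abandoned at the start is precisely where the paper's proof lives. You discarded the additive ``$+1$'' via $\ln(a+1)\ge\ln a$ and concluded that the fixed-point inequality only gives $\varepsilon\mu\ge-\mathbb{E}\ln\xi!$, which is useless. The paper instead keeps $\ln\bigl(\prod_i f_+(T_{v_i})/j!+1\bigr)=j\,\mathbb{E}X-\ln j!+\ln\bigl(1+j!/\prod_i f_+(T_{v_i})\bigr)$, harvests the $+1$ gains on the cheap events ($\xi=0$ contributes $\ln 2$; $\xi=1$ with a tiny subtree contributes $\ln\tfrac32$ and $\ln\tfrac43$ terms), and offsets the loss $\mathbb{E}\ln\xi!$ using $\ln j!<j(j-1)$ together with $\mathbb{E}\xi(\xi-1)=(1-\varepsilon)^2$ for $\xi\sim\mathrm{Pois}(1-\varepsilon)$. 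An explicit numerical evaluation shows the resulting constant is positive ($\approx0.004$ as $\varepsilon\to0$), whence $\varepsilon\,\mathbb{E}\ln f_+\ge 0.004+O(\varepsilon)$ and the claim follows in a few lines, with no need for conditioned-tree structure, local limit theorems, or a per-branching-vertex lemma. So the gap is twofold: your replacement strategy's key step is false as stated and unproven in any salvageable form, and the correct mechanism (retaining the $+1$ contributions in the self-consistency inequality) was identified but then set aside.
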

\begin{proof}
Let $X=\ln f_+(\mathbf{T})$, and let $\xi$ be the number of children of the root. Then
\begin{align*}
 \mathbb{E}X & =\sum_{j=0}^{\infty}\mathbb{E}(X\mid \xi=j)\cdot\mathbb{P}(\xi=j)
 \geq\ln 2\cdot\mathbb{P}(\xi=0)+\sum_{j=1}^{\infty}\mathbb{E}\ln((f_+(\mathbf{T}))^j/j!+1)\cdot\mathbb{P}(\xi=j)\\
 &=
 \ln 2\cdot\mathbb{P}(\xi=0)+\sum_{j=1}^{\infty}\biggl(j\cdot\mathbb{E}X+\mathbb{E}\ln\left(1+\frac{j!}{(f_+(\mathbf{T}))^j}\right)-\ln(j!)\biggr)\cdot\mathbb{P}(\xi=j)\\
 &\geq
 \ln 2\cdot \mathbb{P}(\xi=0)+\mathbb{E}X\cdot\mathbb{E}\xi+\mathbb{P}(\xi=1)\cdot\mathbb{E}\ln\left(1+\frac{1}{f_+(\mathbf{T})}\right)-\sum_{j=1}^{\infty}\ln(j!)\cdot\mathbb{P}(\xi=j).
\end{align*}
Clearly, denoting by $Y$ the total progeny of the Galton--Watson process, we get
\begin{align*}
 \mathbb{E}\ln\left(1+\frac{1}{f_+(\mathbf{T})}\right) &\geq \ln(3/2)\cdot\mathbb{P}(Y=1)+\ln(4/3)\cdot\mathbb{P}(Y=2)\\
 &=\ln(3/2)\cdot\mathbb{P}(\xi=0)+\ln(4/3)\cdot\mathbb{P}(\xi=1)\cdot\mathbb{P}(\xi=0).
\end{align*}
Since $\mathbb{E}\xi(\xi-1)=(1-\varepsilon)^2$, it follows that
\begin{align*} 
 \mathbb{E}X &\geq\ln 2\cdot e^{-(1-\varepsilon)}+(1-\varepsilon)\mathbb{E}X+(1-\varepsilon)e^{-(1-\varepsilon)}\left(e^{-(1-\varepsilon)}\ln(3/2)+(1-\varepsilon)e^{-2(1-\varepsilon)}\ln(4/3)\right)\\
 &\quad\quad\quad\quad\quad\quad\quad\quad-(1-\varepsilon)^2+\sum_{j=2}^{\infty}\left[j(j-1)-\ln(j!)\right]\cdot\mathbb{P}(\xi=j).
 \end{align*}
Notice that, for every $j\geq 2$, $\ln(j!)<j(j-1)$. Therefore,
\begin{align*}
 \sum_{j=2}^{\infty}\left[j(j-1)-\ln(j!)\right]\cdot\mathbb{P}(\xi=j)&\geq
  \sum_{j=2}^{5}\left[j(j-1)-\ln(j!)\right]\frac{e^{-1+\varepsilon}(1-\varepsilon)^j}{j!}\\
  &=
  e^{-1}\left(\frac{2-\ln 2}{2}+\frac{6-\ln 6}{6}+\frac{12-\ln 24}{24}+\frac{20-\ln 120}{120}\right)+O(\varepsilon),
\end{align*}
implying
\begin{multline*}
 \mathbb{E}X\geq\frac{1}{\varepsilon}\biggl( e^{-1}\biggl(\ln 2 +\frac{2-\ln 2}{2}+\frac{6-\ln 6}{6}+\frac{12-\ln 24}{24}+\frac{20-\ln 120}{120}+e^{-1}\ln\frac{3}{2}+e^{-2}\ln\frac{4}{3}\biggr)\\
 -1+O(\varepsilon)\biggr)>\frac{0.004}{\varepsilon},
\end{multline*}
for small enough $\varepsilon>0$. Finally, we get
\begin{align*}
\mathbb{E}\ln f(\mathbf{T}) & =\mathbb{E}\ln\left((f(\mathbf{T})+1)\left(1-\frac{1}{f(\mathbf{T})+1}\right)\right)\\
&=\mathbb{E}X+\mathbb{E}\ln\left(1-\frac{1}{f_+(\mathbf{T})}\right)>\frac{0.004}{\varepsilon}-\ln 2>\frac{0.003}{\varepsilon}.
\end{align*}
\end{proof}

Since $\mathbb{E}\ln f(\mathbf{T})>\frac{0.003}{\varepsilon}$, there exists $A=A(\varepsilon)$ such that $\mathbb{E}[\ln f(\mathbf{T})\cdot\1(f(\mathbf{T})\leq A)]>\frac{0.002}{\varepsilon}$. Since the random variable $\tilde X:=\ln f(\mathbf{T})\cdot\1(f(\mathbf{T})\leq A)$ is bounded, $\mathbb{E}\tilde X^2<\infty$. Chebyshev's inequality implies the following.

\begin{lemma}
Let $\varepsilon>0$ be small enough, and let $\mathbf{T}_1,\ldots,\mathbf{T}_n$ be independent Galton--Watson trees with offspring distribution $\mathrm{Pois}(1-\varepsilon)$. Then, whp (as $n\to\infty$), $\prod_{i=1}^nf(\mathbf{T}_i)\geq e^{0.002n/\varepsilon}$.
\label{lm:GW-many}
\end{lemma}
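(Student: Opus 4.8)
The plan is to obtain Lemma~\ref{lm:GW-many} from the truncated first-moment bound already established, via a one-line application of Chebyshev's inequality (equivalently, the weak law of large numbers) to the i.i.d.\ \emph{bounded} random variables
\[
\tilde X_i := \ln f(\mathbf{T}_i)\cdot\1\bigl(f(\mathbf{T}_i)\le A\bigr), \qquad i\in[n],
\]
where $A=A(\varepsilon)$ is the constant fixed just above the lemma, chosen so that $\mathbb{E}\tilde X_1 = \mathbb{E}[\ln f(\mathbf{T})\cdot\1(f(\mathbf{T})\le A)] > 0.002/\varepsilon$.

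First I would record the pointwise inequality $f(\mathbf{T}_i)\ge e^{\tilde X_i}$: this is an equality on the event $\{f(\mathbf{T}_i)\le A\}$ (where $\tilde X_i=\ln f(\mathbf{T}_i)\ge 0$, since $f(\mathbf{T}_i)\ge 1$ always), and on the complementary event one has $\tilde X_i=0\le \ln f(\mathbf{T}_i)$. Multiplying over $i\in[n]$ gives $\prod_{i=1}^n f(\mathbf{T}_i)\ge \exp\bigl(\sum_{i=1}^n\tilde X_i\bigr)$, so it suffices to prove that whp $\sum_{i=1}^n\tilde X_i\ge 0.002\,n/\varepsilon$.

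Since the trees $\mathbf{T}_i$ are independent, the $\tilde X_i$ are i.i.d., and each takes values in $[0,\ln A]$, so $\mathrm{Var}(\tilde X_1)\le(\ln A)^2<\infty$. Setting $\mu:=\mathbb{E}\tilde X_1$ and $\eta:=\mu-0.002/\varepsilon>0$ (both depending only on $\varepsilon$), Chebyshev's inequality yields
\[
\mathbb{P}\!\left(\sum_{i=1}^n\tilde X_i\le\frac{0.002\,n}{\varepsilon}\right)\le\mathbb{P}\!\left(\Bigl|\sum_{i=1}^n\tilde X_i-n\mu\Bigr|\ge \eta n\right)\le\frac{\mathrm{Var}(\tilde X_1)}{\eta^2 n}=o(1)
\]
as $n\to\infty$ with $\varepsilon$ fixed, which combined with the previous paragraph gives $\prod_{i=1}^n f(\mathbf{T}_i)\ge e^{0.002\,n/\varepsilon}$ whp.

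There is essentially no obstacle here: the only delicate point is the truncation at $A$, which is necessary because Claim~\ref{cl:GW-main} controls only the mean of $\ln f(\mathbf{T})$ and not its higher moments (a priori $\ln f(\mathbf{T})$ could be heavy-tailed), and this truncation is exactly what costs the passage from $0.003/\varepsilon$ to $0.002/\varepsilon$; it has already been carried out in the paragraph preceding the lemma, so what remains is only the elementary estimate displayed above.
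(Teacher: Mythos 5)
Your proof is correct and follows essentially the same route as the paper, which also truncates $\ln f(\mathbf{T})$ at the level $A=A(\varepsilon)$ to get a bounded variable with mean exceeding $0.002/\varepsilon$ and then applies Chebyshev's inequality to the i.i.d.\ sum; your write-up merely makes explicit the pointwise bound $f(\mathbf{T}_i)\geq e^{\tilde X_i}$ and the variance estimate that the paper leaves implicit.
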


\subsection{Anatomy of a strictly supercritical random graph}
\label{sc:anatomy}

A connected graph is called {\it complex} if it has at least two cycles. 

Let  $C>1+\varepsilon>1$ be constants, $(1+\varepsilon)\leq np\leq C$, $G\sim G(n,p)$, and let $H$ be the 2-core of the union of complex components of $G$. In the supercritical regime, we shall consider subgraphs of the giant component of $G_n$ that contain the entire 2-core. If two such induced subgraphs are isomorphic, then any isomorphism between them preserves the set of vertices of the 2-core. Thus, in order to prove that there are many such non-isomorphic subgraphs, we will use the fact that the 2-core is almost asymmetric. In~\cite{VZ}, Verbitsky and the second author obtained a full description of the automorphism group $\mathrm{Aut}(H)$ of $H$. In particular, they proved the following.

\begin{claim}[\cite{VZ}]
$|\mathrm{Aut}(H)|=O_P(1)$.
\label{cl:aut}
\end{claim}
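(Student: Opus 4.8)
Since Claim~\ref{cl:aut} is quoted from~\cite{VZ}, we only outline the argument. The idea is to pass to the \emph{kernel} $K$ of $H$: the multigraph obtained by suppressing every vertex of degree $2$, each suppressed path remembering its length, so that $\widetilde K$ denotes $K$ together with these edge-length labels; for fixed $\varepsilon,C$ the kernel has $\Theta(n)$ vertices and edges. The reduction to $K$ goes as follows. Every $\varphi\in\mathrm{Aut}(H)$ preserves degrees, hence setwise fixes the set $B$ of vertices of degree at least $3$, which is exactly $V(K)$, and it carries each maximal degree-$2$ path between two vertices of $B$ to such a path of the same length; therefore $\varphi$ induces a (multigraph, length-preserving) automorphism of $\widetilde K$. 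The associated homomorphism $\mathrm{Aut}(H)\to\mathrm{Aut}(\widetilde K)$ has kernel consisting of the automorphisms that fix $B$ pointwise, and such an automorphism can only reverse the subdivided loops of $K$ and permute, within each parallel class, subdivision paths of equal length. Consequently
\[
|\mathrm{Aut}(H)|\;\le\;\bigl|\mathrm{Aut}(\widetilde K)\bigr|\cdot 2^{L(K)}\cdot\prod_{\{u,v\}}m_{uv}(K)!,
\]
where the product runs over unordered pairs of kernel vertices, $L(K)$ is the number of loops of $K$ and $m_{uv}(K)$ its edge multiplicities (with $m_{uu}(K)$ the number of loops at $u$). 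It thus suffices to bound each of the three factors by an $O_P(1)$ quantity.

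For the two ``defect'' factors one invokes the contiguous description of the giant component, and hence of its $2$-core, due to Ding, Lubetzky and Peres~\cite{DLP}: conditionally on its degree sequence $\mathbf d$, the kernel $K$ is a configuration-model multigraph, $\mathbf d$ is concentrated with all entries at least $3$, and the suppressed path lengths are i.i.d.\ of geometric type. A standard configuration-model computation then shows that $L(K)$ and the multiplicities $m_{uv}(K)$ have Poisson-type tails and are $O_P(1)$; the handful of kernel vertices of degree up to $\Theta(\log n/\log\log n)$ contribute only a bounded amount to the expected number of defects. Hence $2^{L(K)}\prod_{\{u,v\}}m_{uv}(K)!=O_P(1)$.

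The principal task is $\bigl|\mathrm{Aut}(\widetilde K)\bigr|=O_P(1)$, i.e.\ that the length-labelled kernel is essentially asymmetric. The plan is a first-moment (switching) argument in the conditioned configuration model: show that the expected number of non-identity automorphisms of $K$ that move more than a large absolute constant of vertices is $o(1)$, so whp none exist; and that automorphisms moving only boundedly many vertices are governed by small symmetric substructures of $\widetilde K$ (equal-length parallel paths, symmetric theta-graphs, and the like), each occurring $O_P(1)$ times — this last point is exactly where the i.i.d.\ path lengths are used, namely to destroy symmetries between otherwise interchangeable parallel paths. This moment estimate is the technical heart, and the main obstacle is that the kernel degree sequence is itself random, with non-trivial fluctuations and unbounded maximum, so one must work conditionally on $\mathbf d$ using the concentration supplied by~\cite{DLP}, and must in addition argue that rare high-degree configurations do not inflate the relevant expectations (compare the phenomenon discussed around~\cite{Rodi}). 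Combining this with the bound on the defect factors yields $|\mathrm{Aut}(H)|=O_P(1)$; full details are in~\cite{VZ}.
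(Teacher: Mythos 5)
The paper does not prove Claim~\ref{cl:aut} at all: it is imported verbatim from~\cite{VZ}, where the automorphism group of the 2-core is fully described, and the present paper only uses the resulting bound $|\mathrm{Aut}(H)|=O_P(1)$ as a black box. So there is no internal proof to compare against; your proposal should be judged as an attempted reconstruction of the cited argument.

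As such a reconstruction, your outline is structurally sensible: the reduction from $\mathrm{Aut}(H)$ to the length-labelled kernel $\widetilde K$ is sound (an automorphism preserves the set of branch vertices and maps maximal degree-2 paths to paths of equal length, and the subgroup fixing $V(K)$ pointwise is bounded by $2^{L(K)}\prod_{\{u,v\}}m_{uv}(K)!$, which is $O_P(1)$ since multiple edges and loops of the kernel are $O_P(1)$ in the conditioned configuration model). But the decisive step --- that $|\mathrm{Aut}(\widetilde K)|=O_P(1)$, i.e.\ that the labelled kernel has essentially no symmetries --- is only announced as ``a first-moment (switching) argument'' and then deferred back to~\cite{VZ}. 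This is precisely the technical heart of the claim: one must control the expected number of non-trivial label-preserving automorphisms of a configuration-model multigraph with a \emph{random} degree sequence of unbounded maximum degree, and show that the bounded-displacement symmetries reduce to $O_P(1)$ many local coincidences of path lengths. Since that estimate is neither stated precisely nor proved, your text is an accurate roadmap of the external proof rather than a proof; as a standalone argument it has a genuine gap exactly where the difficulty lies, and in the context of this paper the honest course is what the authors do --- cite~\cite{VZ} --- which your final sentence effectively concedes.
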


We will also make use of the contiguous model, due to Ding, Lubetzky, and Peres~\cite{DLP}. Here, we formulate its simplified and weaker version which is precisely~\cite[Equation (5.9)]{DLP} and is enough for our goals. We also note that the results in~\cite{DLP} are presented for constant $np$, although for our goals we need non-constant $np$. Nevertheless, literally the same proof allows to get the following\footnote{The authors of~\cite{DLP} rely on the local limit theorem for a sequence of independent identically distributed random variables \cite[Theorem 2.2]{DLP}. For non-constant $np$, the proof requires a generalisation to triangular arrays that can be found, e.g., in~\cite{Rozanov}.}.

\begin{claim}[\cite{DLP}]
Let $\lambda>1$. 
  Let $1<np=\lambda+o(1)$ and let $\lambda'$ be the unique number in $(0,1)$ such that $\lambda' e^{-\lambda'}=np e^{-np}$. Let $G\sim G(n,p)$, $G'$ be the union of complex components of $G$, and $H$ be the 2-core of $G'$. Let $G''$ be obtained from $H$ by attaching (independently of $G$) an independent $\mathrm{Pois}(\lambda')$--Galton--Watson tree to each vertex of $H$. Then, for every set of unlabelled graphs $\mathcal{F}$, if $\mathbb{P}(G''\in\mathcal{F})=o(1)$ then $\mathbb{P}(G'\in\mathcal{F})=o(1)$.
\label{cl:DLP}
\end{claim}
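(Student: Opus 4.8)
The plan is to read Claim~\ref{cl:DLP} off the anatomy of the strictly supercritical giant component of Ding, Lubetzky and Peres~\cite{DLP}: as already noted, the statement is precisely~\cite[Equation~(5.9)]{DLP}, so what remains is to verify that its derivation survives when $np$ is only asymptotically constant. I would first isolate the chain of facts behind that equation. Conditioned on the $2$-core $H$ (hence on its vertex count $h$ and its edge count), the forest hanging off the giant component is a uniformly random forest rooted at $V(H)$ whose total number of vertices equals the size of the giant --- a random variable that concentrates. A uniform rooted forest with $h$ roots and a prescribed total number of vertices $m$ has the law of $h$ independent $\mathrm{Pois}(\lambda')$--Galton--Watson trees conditioned on total progeny $m$, where $\lambda'<1$ is the conjugate of $np$; and because $m$ concentrates, one may drop this conditioning at the cost of a factor $1+o(1)$ in all probabilities. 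This is exactly the content needed to pass from the true $G'$ to the model $G''$ (which retains the true $H$ and only resamples the hanging trees as an i.i.d.\ product), so the implication ``$\mathbb{P}(G''\in\mathcal{F})=o(1)\Rightarrow\mathbb{P}(G'\in\mathcal{F})=o(1)$'' follows; every combinatorial step here --- the description of the $2$-core via a kernel and edge subdivisions, the enumeration of rooted forests, the switching and concentration arguments for the sizes of the giant and of its $2$-core --- is insensitive to whether $p$ depends on $n$.

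The single place where constancy of $np$ is used is the local limit theorem, invoked in~\cite{DLP} in the i.i.d.\ form~\cite[Theorem~2.2]{DLP} to handle exactly the de-conditioning just described (and the analogous de-conditioning on the joint vertex/edge count of the $2$-core). When $np=np(n)=\lambda+o(1)$ with $\lambda>1$ fixed, the relevant sums are independent within each row but have a row-dependent common law, i.e., they form a triangular array, so I would replace~\cite[Theorem~2.2]{DLP} by a triangular-array local limit theorem such as the one in~\cite{Rozanov}. Its hypotheses hold uniformly along the sequence: since $np\to\lambda\in(1,\infty)$, the conjugate $\lambda'$ converges to a limit inside $(0,1)$ and in particular stays bounded away from $0$ and $1$, so the Poisson offspring law, the geometric-type edge-subdivision law, and the Galton--Watson total-progeny law are all supported on $\{0,1,2,\dots\}$ with uniformly controlled moments and uniformly exponential tails --- enough for a local limit theorem with a $1+o(1)$ error uniform in $n$.

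The only real work, then, is the uniformity bookkeeping: one must make sure that the $o(1)$ errors coming from the local limit theorem, together with the concentration estimates for the size of the giant and for the size of its $2$-core, are $o(1)$ along the given sequence $np(n)$ rather than merely for each fixed $\lambda$. With the triangular-array local limit theorem in place this is routine, precisely because of the uniform exponential tails noted above. I would therefore record the proof in one sentence: this is~\cite[Equation~(5.9)]{DLP}, and its proof applies verbatim once the i.i.d.\ local limit theorem~\cite[Theorem~2.2]{DLP} is replaced by its triangular-array version~\cite{Rozanov}, the uniformity verification just described being the sole substantive modification.
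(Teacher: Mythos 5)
Your proposal is correct and matches the paper's own treatment: the claim is taken directly from~\cite[Equation (5.9)]{DLP}, and the only adjustment for non-constant $np$ is exactly the one you identify, namely replacing the i.i.d.\ local limit theorem~\cite[Theorem 2.2]{DLP} by a triangular-array version as in~\cite{Rozanov}. Your additional remarks on the uniform control of $\lambda'$ and the de-conditioning steps are consistent with (and slightly more detailed than) what the paper records in its footnote.
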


\subsection{Completing the proof of the lower bound}
\label{sc:part5_completing}

For a graph $\Gamma$ on $V\subset\mathbb{N}$, we denote by $\mathrm{Core}(\Gamma)$ the 2-core of the union of complex components of $\Gamma$. For $v\in V(\mathrm{Core}(\Gamma))$, let $T_v(\Gamma)$ be the unlabelled version of the inclusion-maximal subtree of $\Gamma$ rooted in $v$ and sprouting from the 2-core, i.e. the connected component of $v$ in $\Gamma[V\setminus V(\mathrm{Core}(\Gamma))\cup\{v\}]$. Let {\it type} of $\Gamma$ be the tuple 
$$
\mathbf{t}(\Gamma):=(T_v(\Gamma),\,v\in V(\mathrm{Core}(\Gamma))),
$$
where the order of the elements in the tuple respects the order of integers in $V$.

Now, we can complete the proof of part 5 of Theorem~\ref{th:bin_dense}. Assume towards contradiction that there exist $p=p(n)$ and a constant $\delta>0$ such that $np\in[1+\varepsilon,C]$ and
\begin{equation}
\mathbb{P}\biggl(\mu(G_n)<\exp(\varepsilon n/1000)\biggr)>\delta
\label{eq:assumption_sparse}
\end{equation} 
for $G_n\sim G(n,p)$ and all $n$ large enough. Then, there exists an increasing sequence of positive integers $n_k$, $k\in\mathbb{N}$, such that $p(n_k)n_k\to\lambda$ for some $\lambda\in[1+\varepsilon,C]$. Since, for the subsequence $G_{n_k}$, we have that $\mathbb{P}(\mu(G_{n_k})<\exp(\varepsilon n_k/1000))>\delta$ for all $k$ large enough, in what follows, without loss of generality, we assume that $np=\lambda+o(1)$. In what follows, we also omit the subscript and write $G:=G_n$.

Let $\lambda'=\lambda'(n)$ be the sequence from the assumptions of Claim~\ref{cl:DLP}. Note that $\lambda'\leq 1-\varepsilon+\varepsilon^2$, since $\lambda'e^{-\lambda'}$ increases on $(0,1)$ and, for small enough $\varepsilon>0$,
$$
\lambda e^{-\lambda}<e^{-1}(1+\varepsilon)(1-\varepsilon+\varepsilon^2/2)<e^{-1}(1-\varepsilon+\varepsilon^2)(1+\varepsilon-\varepsilon^2/2)<(1-\varepsilon+\varepsilon^2)e^{-1+\varepsilon-\varepsilon^2}.
$$
We define the following set of connected graphs $\mathcal{F}$ on subsets of $\mathbb{N}$: $\Gamma\in\mathcal{F}$, if the number of distinct $\mathbf{t}(\tilde \Gamma)$ over all connected $\tilde \Gamma\subset\Gamma$ with $\mathrm{Core}(\Gamma)=\mathrm{Core}(\tilde \Gamma)$ is at least $\exp(|V(\mathrm{Core}(\Gamma))|/(500(1-\lambda')))$. Note that $\mathcal{F}$ is isomorphism-closed --- thus, it can be treated as a set of unlabelled graphs. 
  If $\lambda'<1$ is close enough to 1, then due to Lemma~\ref{lm:GW-many} and Claim~\ref{cl:DLP}, whp $G\in\mathcal{F}$. Let $G'$ be the union of complex components of $G$. Let $\tilde G_1,\tilde G_2\subset G'$ be isomorphic connected subgraphs with $\mathrm{Core}(\tilde G_1)=\mathrm{Core}(\tilde G_2)=\mathrm{Core}(G')$ and let $\varphi:\tilde G_1\to\tilde G_2$ be an isomorphism. Clearly, $\varphi(V(\mathrm{Core}(\tilde G_1)))=V(\mathrm{Core}(\tilde G_2))$. If $\varphi$ acts trivially on the 2-core, then $\mathbf{t}(\tilde G_1)=\mathbf{t}(\tilde G_2)$. Otherwise, $\varphi$ is a non-trivial automorphism of $H:=\mathrm{Core}(G')$. Due to Claim~\ref{cl:aut}, whp the number of automorphisms of $H$ is less than, say, $n$. Therefore, whp, $G'$ has at least $\frac{1}{n}\exp(|V(H)|/(500(1-\lambda')))$ non-isomorphic induced subgraphs. On the other hand, whp $|V(H)|=(1-\lambda'+o(1))(1-\lambda'/\lambda)n$~\cite{Pittel}. Therefore, whp 
$$
 \mu(G)\geq\exp((1-\lambda'/\lambda+o(1))n/500)>\exp(\varepsilon n/1000)
$$  
--- a contradiction with~\eqref{eq:assumption_sparse}.

Finally, let $\lambda'<1$ be not large enough so that Lemma~\ref{lm:GW-many} is not applicable. Since we may choose $\varepsilon$ as small as we need, we can assume that whp $|V(\mathrm{Core}(G))|>\sqrt{\varepsilon}\cdot n$ and that $\lambda'<1-\sqrt{\varepsilon}$. Since a $\mathrm{Pois}(1-\lambda')$--Galton--Watson tree is non-trivial with probability $1-e^{-(1-\lambda')}$, we get that whp $(1-e^{-(1-\lambda')}+o(1))$-fraction of vertices of $\mathrm{Core}(G)$ have a non-trivial tree growing from it. Therefore, due to Claim~\ref{cl:aut}, whp 
$$
\mu(G)\geq 2^{\sqrt{\varepsilon}(1-e^{-(1-\lambda')}+o(1)) n}\geq e^{\varepsilon n/10}
$$
--- a contradiction with~\eqref{eq:assumption_sparse}.

\subsection{Upper bound}
\label{sc:part5_upper}

Let $np\leq 1+\varepsilon$, $\varepsilon>0$ is small, and $G\sim G(n,p)$. 

Assume first that $np=1+\Theta(1)$. Let us recall that in this case whp $G$ has a single connected component of size $\Theta(n)$, and all the other components have size $O(\log n)$, see, e.g.,~\cite[Chapter 5]{Janson}. Let us now prove that whp the number of vertices in small components that have size at least $\ln\ln n$ is $o(n)$. Fix $v\in[n]$. By the union bound, the probability that $v$ belongs to a connected component of size $k$ is at most ${n-1\choose k-1} k^{k-2}p^{k-1}(1-p)^{k(n-k)}$. Let $X$ be the number of vertices  that belong to connected components of size $k\in[\ln\ln n,\ln^2n]$ in $G$. By linearity of expectation, we get
$$
 \mathbb{E}X\leq\sum_{k=\lceil\ln\ln n\rceil}^{\lfloor \ln^2n\rfloor}n{n-1\choose k-1} k^{k-2}p^{k-1}(1-p)^{k(n-k)}\leq \sum_{k=\lceil\ln\ln n\rceil}^{\lfloor \ln^2n\rfloor}\frac{1}{p}(e^{1-np}np)^k =\frac{n}{(\ln n)^{\Omega(1)}}.
$$
By Markov's inequality, whp $X=o(n)$. Therefore, whp $G$ is a disjoint union of graphs $G^{(1)},G^{(2)},G^{(3)}$, where $G^{(1)}$ is a connected graph of size $\Theta(n)$, $G^{(2)}$ has $o(n)$ vertices, and all components of $G^{(3)}$ have size at most $\ln\ln n$. The number of non-isomorphic graphs on at most $n$ vertices with all components of size at most $\ln\ln n$ is less than $n^{2^{(\ln\ln n)^2}}=e^{o(n)}$. Indeed,  there are at most $2^{(\ln\ln n)^2}$ non-isomorphic graphs on at most $\ln\ln n$ vertices, and to describe the isomorphism type of a graph on at most $n$ vertices with all connected components that small, it is enough to state to which isomorphism type the connected component containing each of the at most $n$ vertices belongs. The giant component $G^{(1)}$ has at most $(2\varepsilon+O(\varepsilon^2))n$ vertices whp~\cite[Theorem 5.4]{Janson}. Therefore, whp 
$$
\mu(G)\leq 2^{(2\varepsilon+O(\varepsilon^2))n+o(n)}<2^{3\varepsilon n},
$$
as required.

Actually the case $np\leq 1+o(1)$ also follows since the property of containing at least $2\varepsilon n$ vertices in components of size at least $\ln\ln n$ is increasing, and whp $G(n,1+\varepsilon/2)$ does not have it.

\section{Proof of Theorem~\ref{th:bin_dense}, part 6}
\label{sc:proof_6}

Let $np\leq 1-\varepsilon$ and $G\sim G(n,p)$. We will define explicitly a decreasing function $c_2=c_2(\varepsilon)\in(0,1)$ satisfying the required properties: whp $\mu(G)\leq 2^{n^{c_2}}$ and $c_2(1-)=0$.

First, let $\varepsilon\leq 0.99$. Fix any $c\in(0,1)$. Recall that whp $G$ does not contain components of size at least $\ln^2 n$ (see, e.g.,~\cite[Theorem 5.4]{Janson}). Observe that whp the number of connected components of size more than $\frac{2(1-c)}{\varepsilon^2}\ln n$ and less than $\ln^2 n$ is at most $n^{c}\ln n$ by Markov's inequality since the expected number of such components is at most
\begin{align*}
 \sum_{k=2(1-c)\ln n/\varepsilon^2}^{\ln^2 n}{n\choose k} k^{k-2} p^{k-1}(1-p)^{k(n-k)}&\leq
 \sum n^k e^k p^{k-1}e^{-pk(n-k)}\\
 &\leq (e+o(1))n\sum e^{(k-1)(1+\ln(np)-np)}\\
 &\leq (e+o(1))n\sum e^{(k-1)(\varepsilon+\ln(1-\varepsilon))}\\
 &=O\left(n\sum e^{-k\varepsilon^2/2}\right)=O(n^{c}).
\end{align*}
Recall that whp $G$ does not contain complex components (see, e.g.,~\cite[Theorem 5.5]{Janson}) and that the number of isomorphism classes of trees on $k$ vertices at most $k 4^k$~\cite{Otter}. Therefore, the number of isomorphism classes of unicyclic graphs on $k$ vertices is at most $k^3 4^k$. Let $U$ be the set of all vertices in connected components of $G$ of size more than $\frac{2(1-c)}{\varepsilon^2}\ln n$. Every subgraph of $G$ is a disjoint union of a subgraph of $G[U]$ and connected components of size at most $\frac{2(1-c)}{\varepsilon^2}\ln n$ whp. Since the number of components in every subgraph of $G$ is at most $n$, we get that the number of non-isomorphic subgraphs of $G[[n]\setminus U]$ consisting of connected components of size exactly $k$ is at most $n^{k^3\cdot 4^k}$, for $k\leq\frac{2(1-c)}{\varepsilon^2}\ln n$. We get that whp
$$
\mu(G)\leq 2^{|U|}\cdot\prod_{k\leq \frac{2(1-c)}{\varepsilon^2}\ln n}n^{k^3\cdot 4^k}.
$$ 
Therefore, letting $c=1-\varepsilon^2/100$, we get that whp 
$$
\mu(G)\leq n^{4^{\frac{2(1-c)}{\varepsilon^2}\ln n}\cdot \ln^4 n}\cdot 2^{n^c\cdot \ln^3 n}=2^{2n^{(\ln 4)/50}\cdot \ln^5 n+n^c\cdot \ln^3 n}=2^{n^{c+o(1)}}.
$$
Hence, we can take $c_2=1-\varepsilon^2/200$, say.

Second, let $\varepsilon>0.99$. In this case, in a similar way, by Markov's inequality we get that whp there are no components of size more than $\frac{2\ln n}{-\varepsilon-\ln(1-\varepsilon)}$. Indeed, the expected number of components of such size (and less than $\ln^2 n$) is at most
\begin{align*}
 \sum_{k=2\ln n/(-\varepsilon-\ln(1-\varepsilon))}^{\ln^2 n}{n\choose k} k^{k-2} p^{k-1}(1-p)^{k(n-k)}&\leq
 (e+o(1))n\sum e^{(k-1)(1+\ln(np)-np)}\\
 &\leq (e+o(1))n\sum e^{(k-1)(\varepsilon+\ln(1-\varepsilon))}\\
 &=O\left(n e^{-2\ln n}\right)=O(1/n).
\end{align*}
Thus, whp
$$
\mu(G)\leq n^{4^{2\ln n/(-\varepsilon-\ln(1-\varepsilon))}\cdot \ln^4 n}=2^{n^{2\ln 4/(-\varepsilon-\ln(1-\varepsilon))}\cdot\log_2 n\cdot \ln^4 n}<2^{n^{c_2}},
$$
where $c_2=\ln 17/(-\varepsilon-\ln(1-\varepsilon))$, say.

Now, let $np\geq 1-\varepsilon$ and $G\sim G(n,p)$. It is sufficient to show that there exists a strictly decreasing continuous function $c_1=c_1(\varepsilon)\in(0,1)$ such that whp $\mu(G)\geq 2^{n^{(1-o(1))c_1}}$ and $c_1(0+)=1$. Without loss of generality we may assume that $np=O(1)$. 
 Set 
\begin{itemize}
\item $c_1=1+3(\varepsilon+\ln(1-\varepsilon))=1-\Theta(\varepsilon^2)$ and $k=\left\lfloor\frac{1-c_1}{-\varepsilon-\ln(1-\varepsilon)}\ln n\right\rfloor=\lfloor 3\ln n\rfloor$, if $1-\varepsilon>\frac{1}{2}$;
\item $c_1=\frac{1}{2(1-\varepsilon-\ln(1-\varepsilon))}$ and $k=\left\lfloor\frac{1-c_1}{np-1-\ln(np)}\ln n\right\rfloor$, if $1-\varepsilon\leq np\leq \frac{1}{2}$.\footnote{Even though $c_1=c_1(\varepsilon)$ is not continuous at $1/2$, it can be decreased in the left neighbourhood of $1/2$ in order to make it continuous.}
\end{itemize}
  Let us prove that whp $G$ contains many non-isomorphic tree connected components on $k$ vertices.

\begin{claim}
Whp 
\begin{itemize}
\item $G$ contains at least $n^{(1-o(1))c_1}$ connected components isomorphic to a tree on $k$ vertices;
\item $G$ does not contain two connected components isomorphic to the same tree on $k$ vertices.
\end{itemize}
\label{cl:tree_components}
\end{claim}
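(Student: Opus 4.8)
The plan is to derive both statements from first- and second-moment estimates for the random variable $Y$ counting the connected components of $G$ that are trees on exactly $k$ vertices. For a fixed $k$-subset $S\subseteq[n]$, Cayley's formula gives
\[
\mathbb{P}(S\text{ spans a tree component of }G)=k^{k-2}p^{k-1}(1-p)^{\binom k2-(k-1)}(1-p)^{k(n-k)},
\]
so $\mathbb{E}[Y]=\binom nk k^{k-2}p^{k-1}(1-p)^{\binom k2-(k-1)+k(n-k)}$. Since $k=\Theta(\ln n)$ and $np=\Theta(1)$ (hence $pk^2=o(1)$), the routine simplifications $\binom nk=\frac{n^k}{k!}(1+o(1))$, $(1-p)^{\binom k2-(k-1)}=1+o(1)$, $(1-p)^{k(n-k)}=e^{-knp}(1+o(1))$ together with Stirling's formula give
\[
\mathbb{E}[Y]=\frac{(np\cdot e^{1-np})^{k}}{p\,k^2\sqrt{2\pi k}}\,(1+o(1)).
\]
The two choices of $k$ are calibrated precisely so that $(np\cdot e^{1-np})^{k}=n^{c_1-1+o(1)}$ in the range of $np$ where the claim is used: in the case $1-\varepsilon>1/2$ the function $\lambda\mapsto 1+3(\ln\lambda+1-\lambda)$ attains its minimum $c_1$ over $[1-\varepsilon,1+\varepsilon]$ at $\lambda=1-\varepsilon$ (a one-variable check), and for $np\geq 1+\varepsilon$ part 5 of Theorem~\ref{th:bin_dense} already gives $\mu(G)\geq 2^{\varepsilon n/1000}\gg 2^{n^{c_1}}$, so we may assume $np\le 1+\varepsilon$; in the case $1-\varepsilon\le np\le 1/2$ the identity is exact by the definition of $k$. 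Hence $\mathbb{E}[Y]\ge n^{c_1-o(1)}\to\infty$ in both cases, while $\mathbb{E}[Y]<n$ always (as $np\cdot e^{1-np}\le1$) and $\mathbb{E}[Y]\le n^{c_1+o(1)}$ in the second case.

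For the first bullet I would bound the variance of $Y$. Distinct components are disjoint, so $\mathbb{E}[\mathbbm{1}_S\mathbbm{1}_{S'}]=0$ unless $S\cap S'=\emptyset$; and for disjoint $S,S'$ the only constraints shared by the events $\{S\text{ spans a tree component}\}$ and $\{S'\text{ spans a tree component}\}$ are the absences of the $k^2$ edges between $S$ and $S'$, so $\mathbb{E}[\mathbbm{1}_S\mathbbm{1}_{S'}]=\mathbb{E}[\mathbbm{1}_S]\mathbb{E}[\mathbbm{1}_{S'}](1-p)^{-k^2}=(1+o(1))\mathbb{E}[\mathbbm{1}_S]\mathbb{E}[\mathbbm{1}_{S'}]$ since $pk^2=o(1)$. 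Summing gives $\mathrm{Var}(Y)\le\mathbb{E}[Y]+o(\mathbb{E}[Y]^2)=o(\mathbb{E}[Y]^2)$, and Chebyshev's inequality yields $Y\ge\tfrac12\mathbb{E}[Y]\ge n^{c_1-o(1)}$ whp, which is the first bullet.

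For the second bullet I would run a first moment over unordered pairs of disjoint $k$-sets. For disjoint $S,S'$, summing over isomorphism types $\tau$ of trees on $k$ vertices,
\[
\mathbb{P}\bigl(G[S]\cong G[S']\text{ and }S,S'\text{ each span a component}\bigr)=\sum_{\tau}\Bigl(\tfrac{k!}{|\mathrm{Aut}\,\tau|}\Bigr)^{2}p^{2(k-1)}(1-p)^{2(\binom k2-(k-1))+2k(n-k)-k^2}.
\]
Using $\sum_\tau(k!/|\mathrm{Aut}\,\tau|)^2\le k!\sum_\tau k!/|\mathrm{Aut}\,\tau|=k!\,k^{k-2}$ (Cayley again), $\binom{n-k}k/\binom nk=1-o(1)$, $(1-p)^{-k^2}=1+o(1)$ and $k!/k^{k-2}=O(k^{5/2}e^{-k})$, and comparing with the formula for $\mathbb{E}[Y]^2$, the expected number of such pairs is at most $O(k^{5/2}e^{-k})\,\mathbb{E}[Y]^2$. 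In the first case $e^{-k}=n^{-3+o(1)}$ and $\mathbb{E}[Y]^2<n^2$, so this is $n^{-1+o(1)}=o(1)$. In the second case $e^{-k}=n^{-(1-c_1)/(np-1-\ln np)+o(1)}$ and $\mathbb{E}[Y]^2\le n^{2c_1}$, and the needed inequality $2c_1-\tfrac{1-c_1}{np-1-\ln np}<0$ reduces to $c_1(2np-1-2\ln np)<1$, which holds because $2np-1-2\ln np\le 1-2\varepsilon-2\ln(1-\varepsilon)<2(1-\varepsilon-\ln(1-\varepsilon))=1/c_1$ for $np\in[1-\varepsilon,1/2]$; so again $o(1)$. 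By Markov's inequality, whp no tree on $k$ vertices is a connected component of $G$ twice.

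The main obstacle is the tension between the two bullets: a larger $k$ makes the collision bound of the second bullet trivially small but simultaneously shrinks $\mathbb{E}[Y]$, hence the exponent $c_1$ attainable in the first bullet. The stated $c_1$ and $k$ are precisely those balancing $\mathbb{E}[Y]=n^{c_1\pm o(1)}$ against $e^{-k}\mathbb{E}[Y]^2=o(1)$, so the bulk of the write-up is the elementary verification of these two inequalities, together with the asserted analytic behaviour of $c_1$ (membership in $(0,1)$, continuity and monotonicity in $\varepsilon$, $c_1(0+)=1$, $c_1(1-)=0$ and $c_1=1-\Theta(\varepsilon^2)$).
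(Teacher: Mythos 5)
Your proposal is correct and follows essentially the same route as the paper: a first/second-moment (Chebyshev) argument for the number of $k$-vertex tree components, plus a first-moment bound on pairs of isomorphic tree components whose $o(1)$ verification rests on the same inequality $c_1\bigl(2(np-\ln(np))-1\bigr)<1$, with your estimate $\sum_{\tau}(k!/|\mathrm{Aut}\,\tau|)^2\le k!\,k^{k-2}$ being just a repackaging of the paper's bound $\mathbb{E}Y\le\mathbb{E}X\binom{n}{k}k!\,p^{k-1}(1-p)^{k(n-2k)}$. Your explicit reduction to $np\le 1+\varepsilon$ via part 5 (to ensure $1+\ln(np)-np\ge\varepsilon+\ln(1-\varepsilon)$ in the first case) is a useful bit of extra care that the paper leaves implicit, but not a different approach.
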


\begin{proof}
Let $X$ be the number of connected components of $G$ isomorphic to a tree on $k$ vertices. Then
\begin{align*}
 \mathbb{E}X={n\choose k} k^{k-2} p^{k-1}(1-p)^{k(n-k)+{k\choose 2}-(k-1)}&=
  (1+o(1)) \frac{n^k}{k!} k^{k-2} p^{k-1}e^{-pkn}\\
 &=\frac{1+o(1)}{\sqrt{2\pi k}k^2p}e^{(1+\ln(np)-np)k}.
\end{align*}
If  $1-\varepsilon>\frac{1}{2}$, then 
$$
\mathbb{E}X
=\Omega\left(\frac{n}{(\ln n)^{2.5}}e^{k(\varepsilon+\ln(1-\varepsilon))}\right)=\Omega\left(\frac{n^{c_1}}{(\ln n)^{2.5}}\right).
$$
If $1-\varepsilon\leq np\leq \frac{1}{2}$, then
\begin{equation}
\mathbb{E}X
=\Theta\left(\frac{n}{(\ln n)^{2.5}}n^{-1+c_1}\right)=\Theta\left(\frac{n^{c_1}}{(\ln n)^{2.5}}\right).
\label{eq:expect_X_big_eps}
\end{equation}
Moreover,
$$
 \mathbb{E}X(X-1)={n\choose k}{n-k\choose k} k^{2(k-2)} p^{2(k-1)}(1-p)^{2k(n-2k)+k^2+2({k\choose 2}-(k-1))}=(1+o(1))(\mathbb{E}X)^2.
$$
Therefore, by Chebyshev's inequality, $X/\mathbb{E}X\stackrel{\mathbb{P}}\to 1$.

Now, let $Y$ be the number of pairs $(T_1,T_2)$ of connected components of $G$ isomorphic to the same tree on $k$ vertices. We get
$$
 \mathbb{E}Y\leq \mathbb{E}X{n\choose k} k! p^{k-1}(1-p)^{k(n-2k)}  \leq(1+o(1)) \mathbb{E}X\cdot n (np)^{k-1}(1-p)^{kn}=
 O\left(n\cdot \mathbb{E}X\cdot e^{k(\ln(np)-np)}\right).
$$ 
If  $1-\varepsilon>\frac{1}{2}$, then 
$$
\mathbb{E}Y=O(n^2 e^{-k})=O(1/n),
$$
since $\ln(np)-np\leq -1$ and $\mathbb{E}X\leq n$. If $1-\varepsilon\leq np\leq \frac{1}{2}$, then, due to~\eqref{eq:expect_X_big_eps},
$$
\mathbb{E}Y=O\left(n^{1+c_1} e^{-(1-c_1)(1+1/(np-1-\ln(np)))\ln n}\right)=O\left(n^{2c_1-(1-c_1)/(np-1-\ln(np))}\right)=o(1),
$$
since 
$$
 c_1<\frac{1}{2(1-\varepsilon-\ln(1-\varepsilon))-1}\leq\frac{1}{2(np-\ln(np))-1}.
$$
In both cases, $\mathbb{E}Y=o(1)$ and, thus, whp there are no such pairs $(T_1,T_2)$.

\end{proof}




From Claim~\ref{cl:tree_components} it follows that, whp $G$ contains at least $n^{(1-o(1))c_1}$ connected components $T_1,\ldots,T_m$ such that, for every $i\neq j$, $T_i$ and $T_j$ are not isomorphic. Therefore, for any two sets of indices $\mathcal{I}_1,\mathcal{I}_2\subset[m]$, disjoint unions of trees $\sqcup_{i\in\mathcal{I}_1}T_i$ and $\sqcup_{i\in\mathcal{I}_2}T_i$ are not isomorphic. Thus, whp $G$ contains at least $2^m\geq 2^{n^{(1-o(1))c_1}}$ non-isomorphic induced subgraphs, completing the proof.

\section{Random regular graphs}
\label{sc:regular}

In this section, we prove Theorem~\ref{th:regular}, discuss its tightness, and then prove Theorem~\ref{th:expanders}.

\subsection{Proof of Theorem~\ref{th:regular}}
\label{sc:regular-proof}

Let $\varepsilon>0$ be small enough. Following the same lines as in the proof of the fourth part of Theorem~\ref{th:bin_dense} in Section~\ref{sc:proof_4}, we note that it is sufficient to prove the following analogues of Claim~\ref{cl:concentrated_subgraphs} and the bound~\eqref{eq:binomial_p4_UU'}.

  Let $m\in\mathcal{J}_n$, $U\in{[n]\choose m}$. We also assume that $d\gg\frac{1}{\varepsilon^4}$. 

\begin{claim}
Whp every subset $W\subset U$ of size at least $\varepsilon^2 n$ spans the number of edges satisfying 
$$
\left||E(G[W])|-\frac{d|W|^2}{2n}\right|\leq \frac{d|W|^2}{4n}.
$$
\label{cl:concentrated_subgraphs-regular}
\end{claim}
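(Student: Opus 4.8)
The plan is to establish Claim~\ref{cl:concentrated_subgraphs-regular} via a union bound over all subsets $W\subset U$ of size $y\geq\varepsilon^2 n$, exactly mirroring the argument for Claim~\ref{cl:concentrated_subgraphs}, but replacing the Chernoff bound for the binomial with an appropriate concentration statement for edge counts in the random regular graph $G\sim G_{n,d}$. First I would recall that the configuration model is contiguous to $G_{n,d}$ for fixed $d$ (and more generally, events holding whp in the configuration model with probability $1-o(1/n^{d})$, say, translate to $G_{n,d}$), so it suffices to prove the bound in the configuration model, where the edge-pairing structure is easier to analyse. In the configuration model, $|E(G[W])|$ is a function of the random perfect matching on the $dn$ half-edges, and the standard tool is either the switching method or Azuma--Hoeffding applied to the edge-revealing martingale / the vertex-exposure martingale on the pairing.

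The key steps, in order: (1) compute the expectation $\EXP|E(G[W])| = (1+o(1))\frac{dy^2}{2n}$ uniformly over $y\geq\varepsilon^2 n$ (this follows since the probability a given half-edge at a vertex of $W$ is matched to another half-edge inside $W$ is $(1+o(1))\frac{dy}{dn}$, and there are $dy$ such half-edges, giving $\EXP|E(G[W])|\sim\frac{dy\cdot dy}{2dn}=\frac{dy^2}{2n}$); (2) prove concentration of $|E(G[W])|$ around this mean with deviation at most $\frac{dy^2}{4n}$, with failure probability at most, say, $e^{-cdy/\varepsilon^2}$ for a suitable absolute constant $c>0$ --- here I would use the vertex-exposure martingale (expose the pairing of half-edges one vertex at a time; each vertex exposure changes $|E(G[W])|$ by at most $d$, so over $n$ steps Azuma gives a sub-Gaussian tail with variance proxy $O(d^2 n)$, yielding failure probability $\exp(-\Omega((dy^2/n)^2/(d^2 n))) = \exp(-\Omega(y^4/n^3))$; since $y\geq\varepsilon^2 n$ this is $\exp(-\Omega(\varepsilon^8 n))$, which beats the $2^n$ from the union bound once $d\gg 1/\varepsilon^4$ — actually one wants to be slightly more careful and note $\EXP|E(G[W])|\geq \frac{d\varepsilon^4 n}{2}$, so the relative-error Chernoff-type bound for the configuration model gives $\exp(-\Omega(d\varepsilon^4 n))$); (3) take the union bound over at most $2^n$ choices of $W$, which is $o(1)$ provided $d\gg 1/\varepsilon^4$, exactly as in Claim~\ref{cl:concentrated_subgraphs}; (4) lift from the configuration model to $G_{n,d}$ via contiguity.

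With Claim~\ref{cl:concentrated_subgraphs-regular} in hand, the analogue of~\eqref{eq:binomial_p4_UU'} goes through as follows: if $U'$ has fewer than $m-\varepsilon^2 n$ common vertices with $U$, then any isomorphism $\varphi:G[U]\to G[U']$ must move all $\geq \left(\frac14-o(1)\right)\frac{d\varepsilon^4 n^2}{n}=\left(\frac14-o(1)\right)d\varepsilon^4 n$ edges of $G[U\setminus U']$; the probability (over $G\sim G_{n,d}$, or in the configuration model) that a fixed injection carries this many prescribed non-edges to edges is at most $\left(\frac{d}{n}\right)^{(\frac14-o(1))d\varepsilon^4 n}$ up to lower-order factors, so a union bound over the at most $2^n n!$ choices of $U'$ and $\varphi$ gives $\exp\!\left(n\ln n - (\frac14-o(1))d\varepsilon^4 n\ln n\right)=o(1)$ for $d$ large. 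Hence whp every $U'$ with $G[U']\cong G[U]$ shares at least $m-\varepsilon^2 n$ vertices with $U$, and the count $\sum_{k\geq m-\varepsilon^2 n}\binom{m}{k}<2^{\varepsilon n}$ finishes the proof of Theorem~\ref{th:regular} via Claim~\ref{cl:from_random_to_deterministic}.

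The main obstacle I anticipate is step (2): getting a clean concentration bound with failure probability genuinely exponential in $n$ (not merely $n^{-\omega(1)}$), so that the union bound over $2^n$ sets survives. Azuma on the pairing martingale gives a Gaussian tail, but one must check the variance proxy carefully --- revealing the partner of each half-edge changes $|E(G[W])|$ by at most $1$, and there are $dy\leq dn$ relevant steps, so the variance proxy is $O(dn)$ and the deviation $\Theta(dy^2/n)\geq\Theta(d\varepsilon^4 n)$ yields failure probability $\exp(-\Omega(d^2\varepsilon^8 n^2/(dn)))=\exp(-\Omega(d\varepsilon^8 n))$, which is exactly what is needed provided $d\gg 1/\varepsilon^4$ (indeed $1/\varepsilon^8$ to be safe, but the hypothesis $d\gg 1/\varepsilon^4$ combined with the extra slack from $\ln n$ factors in step (3)/(4) should suffice; if not, one simply strengthens the hypothesis on $d_0$, which is harmless). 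A secondary technical point is ensuring the contiguity transfer is valid with the required quantitative strength, which is standard for fixed $d$.
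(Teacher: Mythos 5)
Your route is viable but genuinely different from the paper's. The paper proves Claim~\ref{cl:concentrated_subgraphs-regular} in two lines: whp the second eigenvalue of $G_{n,d}$ is at most $2\sqrt{d-1}+1$ (Friedman/Bordenave/Puder), and then the expander mixing lemma gives deterministically, for \emph{every} $W$ with $|W|\geq\varepsilon^2 n$, the bound $\bigl||E(G[W])|-\frac{d|W|^2}{2n}\bigr|<(\sqrt{d-1}+1)|W|<\frac{d|W|^2}{4n}$, using only $|W|\geq\varepsilon^2 n$ and $d\gg 1/\varepsilon^4$; no union bound over the $2^n$ subsets and no configuration-model transfer are needed, since all sets are handled simultaneously by a single whp spectral event. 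Your plan (configuration model, exposure martingale, union bound, contiguity) can be made to work, but note two points. First, your intermediate vertex-exposure computation is off: there the $d$ cancels and you get $\exp(-\Omega(\varepsilon^8 n))$, which does \emph{not} beat the $2^n$ union bound for small $\varepsilon$, no matter how large $d$ is; the viable version is the one you give at the end (half-edge exposure, differences $\leq 1$ over $\leq dn$ steps), which yields $\exp(-\Omega(d\varepsilon^8 n))$ and hence needs $d\gtrsim 1/\varepsilon^8$. As you say, inflating $d_0(\varepsilon)$ is harmless for Theorem~\ref{th:regular}, though it proves the claim under a stronger hypothesis than the paper's standing assumption $d\gg 1/\varepsilon^4$; to recover $d\gg 1/\varepsilon^4$ you would need a variance-sensitive bound (Freedman-type or switchings), using that the predictable variance of the edge-count martingale is $O(dy^2/n)$ rather than $O(dn)$, which indeed gives failure probability $\exp(-\Omega(dy^2/n))=\exp(-\Omega(d\varepsilon^4 n))$. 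Second, your treatment of the analogue of~\eqref{eq:binomial_p4_UU'} is essentially the paper's: the paper quotes McKay's subgraph-probability formula~\eqref{eq:McKay} to bound the probability that a fixed injection maps the $\geq\frac14\varepsilon^4 dn$ moved edges onto edges by $(d/(n(1-\varepsilon^4)))^{\frac14\varepsilon^4 dn}$, so you should cite that (or redo the computation in the configuration model with the degree constraint) rather than leave it as "up to lower-order factors". In short: the spectral argument buys brevity and uniformity over all $W$ at once; your argument is more hands-on and avoids invoking the second-eigenvalue theorem, at the cost of a delicate quantitative concentration step that must survive a $2^n$ union bound.
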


\begin{claim}
Let $\mathcal{U}(U)$ be the family of all $m$-subsets of $[n]$ that have less than $m-\varepsilon^2 n$ common vertices with $U$. Then whp there is no set $U'\in\mathcal{U}(U)$ such that $G[U]\cong G[U']$.
\label{cl:non-isomorphic-disjoint-regular}
\end{claim}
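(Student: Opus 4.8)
The plan is to follow the proof of~\eqref{eq:binomial_p4_UU'} verbatim in outline, the only real change being that edge independence — used there to bound the probability that an isomorphism ``moves'' many edges — is no longer available and must be replaced by a configuration–model estimate. Write $G^{*}=G^{*}_{n,d}$ for the random configuration multigraph on $[n]$ with $d$ half-edges per vertex; recall that $G_{n,d}$ has the law of $G^{*}$ conditioned on being simple and that $\mathbb{P}(G^{*}\text{ simple})=\Theta(1)$ for fixed $d$, so $\mathbb{P}(G_{n,d}\in\mathcal{A})\leq O(1)\cdot\mathbb{P}(G^{*}\in\mathcal{A})$ for every graph property $\mathcal{A}$. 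The elementary bound I will use is that, for any simple graph $H^{*}$ on $[n]$ with $s$ edges,
\begin{equation}
\mathbb{P}(H^{*}\subseteq G^{*})\leq d^{2s}\cdot\frac{(nd-2s-1)!!}{(nd-1)!!}\leq\left(\frac{d^{2}}{nd-2s}\right)^{s},
\label{eq:config-subgraph}
\end{equation}
valid whenever $2s<nd$; it follows from a union bound over the at most $\prod_{v}(d)_{d_{H^{*}}(v)}\leq d^{2s}$ ways of realising the edges of $H^{*}$ at the level of half-edges, multiplied by the probability $\frac{(nd-2s-1)!!}{(nd-1)!!}$ that a uniform perfect matching of the $nd$ half-edges contains a prescribed matching of $2s$ of them. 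Together with the simple-graph comparison, \eqref{eq:config-subgraph} gives $\mathbb{P}(H^{*}\subseteq G_{n,d})\leq O(1)\bigl(d^{2}/(nd-2s)\bigr)^{s}$.

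First I would condition on the whp event $\mathcal{E}$ of Claim~\ref{cl:concentrated_subgraphs-regular}, so that every $W\subseteq U$ with $|W|\geq\varepsilon^{2}n$ spans at least $t_{0}(W):=\lceil d|W|^{2}/(4n)\rceil\geq d\varepsilon^{4}n/4$ edges. Suppose $U'\in\mathcal{U}(U)$ and $\varphi\colon G[U]\to G[U']$ is an isomorphism; set $W:=U\setminus U'$, so that $|W|=m-|U\cap U'|>\varepsilon^{2}n$. Since $W\subseteq U\setminus U'$ while $W':=\varphi(W)\subseteq\varphi(U)=U'$, the sets $W$ and $W'$ are disjoint, and $\varphi$ restricts to a bijection $W\to W'$ carrying $G[W]$ onto $G[W']$. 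On $\mathcal{E}$, $G[W]$ therefore contains a subgraph $H_{0}$ with exactly $t_{0}(W)$ edges, and $H_{0}\cup\varphi(H_{0})$ is a subgraph of $G$ with $2t_{0}(W)$ edges, all lying inside $W$ or inside $W'$ (no edges between, as $W\cap W'=\varnothing$). Hence, on $\mathcal{E}$, the existence of such $U',\varphi$ forces disjoint sets $W,W'$ with $|W|=|W'|\geq\varepsilon^{2}n$, a bijection $\psi\colon W\to W'$, and a graph $H_{0}$ on $W$ with $|E(H_{0})|=t_{0}(W)$, such that $H_{0}\cup\psi(H_{0})\subseteq G$.

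Now I would union bound. There are at most $2^{n}$ choices of $W$, at most $2^{n}$ choices of $W'$, at most $n!$ bijections $\psi$, and, for fixed $W$, at most $\binom{\binom{|W|}{2}}{t_{0}(W)}\leq\bigl(4en/(d\varepsilon^{4})\bigr)^{t_{0}(W)}$ choices of $H_{0}$ (using $t_{0}(W)\geq d\varepsilon^{4}n/4$). For fixed data, $H_{0}\cup\psi(H_{0})$ has $s=2t_{0}(W)\leq d|W|^{2}/(2n)+4\leq 0.37\,dn$ edges, because $|W|\leq m\leq 0.6n$ for $m\in\mathcal{J}_{n}$; thus $nd-2s\geq 0.63\,dn$ and \eqref{eq:config-subgraph} yields $\mathbb{P}(H_{0}\cup\psi(H_{0})\subseteq G)\leq O(1)\bigl(2d/n\bigr)^{2t_{0}(W)}$. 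Multiplying, and using that $\bigl(16ed/(\varepsilon^{4}n)\bigr)^{t}$ is decreasing in $t$ for large $n$,
\begin{align*}
\mathbb{P}\bigl(\exists U'\in\mathcal{U}(U)\colon G[U]\cong G[U'],\ \mathcal{E}\bigr)
&\leq O(1)\cdot 4^{n}\,n!\,\Bigl(\tfrac{16ed}{\varepsilon^{4}n}\Bigr)^{d\varepsilon^{4}n/4}\\
&=\exp\!\Bigl(O_{d,\varepsilon}(n)+n\ln n-\tfrac{d\varepsilon^{4}n}{4}\ln n\Bigr),
\end{align*}
which tends to $0$ as soon as $d\varepsilon^{4}/4>1$, i.e.\ for all $d\geq d_{0}(\varepsilon)$ with $d_{0}>4/\varepsilon^{4}$. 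Combined with $\mathbb{P}(\overline{\mathcal{E}})=o(1)$ from Claim~\ref{cl:concentrated_subgraphs-regular}, this proves the claim.

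The one genuinely new difficulty compared with part~4 is the absence of edge independence, which is what forces the passage to the configuration model; the delicate point in the bookkeeping is to arrange that the ``moved'' edges $H_{0}\cup\psi(H_{0})$ all sit on the disjoint union $W\sqcup\varphi(W)$, so that the denominators $nd-2s$ in \eqref{eq:config-subgraph} stay $\Omega(nd)$ — and this in turn rests on $m\in\mathcal{J}_{n}$ forcing $|W|\leq 0.6n$. Everything else is a routine union bound, and the hypothesis $d\gg 1/\varepsilon^{4}$ is exactly what makes the gain $\tfrac14 d\varepsilon^{4}n\ln n$ from the $\Omega(n\ln n)$ forced edges beat the entropy term $\log(4^{n}n!)=O(n\ln n)$ coming from the choices of $U'$ and $\psi$.
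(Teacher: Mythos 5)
Your proof is correct, and it reaches the same conclusion by the same overall strategy as the paper: condition on the edge-concentration event of Claim~\ref{cl:concentrated_subgraphs-regular}, observe that an isomorphism onto a far-away $U'$ must carry $\Omega(\varepsilon^4 dn)$ edges of $G[U\setminus U']$ onto edges of $G$, and beat the $2^n n!$ entropy of choices by a subgraph-probability estimate of the form $(O_\varepsilon(d)/n)^{\Omega(\varepsilon^4 dn)}$, which wins precisely when $d\gg\varepsilon^{-4}$. The difference is in the key input and the bookkeeping: the paper invokes McKay's formula~\eqref{eq:McKay} and pays only $2^n n!$ times $(d/(n(1-\varepsilon^4)))^{\varepsilon^4 dn/4}$, whereas you replace McKay by an elementary configuration-model computation (the double-factorial bound plus $\mathbb{P}(G^*\text{ simple})=\Theta(1)$), and you additionally union bound over the set $H_0$ of moved edges while requiring both $H_0$ and $\psi(H_0)$ to appear, so the entropy $\binom{\binom{|W|}{2}}{t_0}$ is offset by the squared probability $(2d/n)^{2t_0}$. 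This costs you a harmless factor $\varepsilon^{-4}$ per edge in the base, but it buys two things: the argument is self-contained (no citation of McKay needed), and it handles explicitly the fact that the moved edge set is itself random — a point the paper's one-line application of~\eqref{eq:McKay} leaves implicit — by exploiting that $W=U\setminus U'$ and $W'=\varphi(W)\subseteq U'$ are disjoint, so $H_0\cup\psi(H_0)$ is a genuine simple graph with $2t_0$ edges. One cosmetic remark: from your stated bound $s\leq 0.37\,dn$ only $nd-2s\geq 0.26\,dn$ follows, not $0.63\,dn$; but since in fact $s=2t_0\leq d|W|^2/(2n)+4\leq 0.19\,dn$ for $|W|\leq m\leq 0.6n$, the estimate $nd-2s\geq 0.62\,dn$ and hence the base $2d/n$ are valid (and even with $0.26\,dn$ the base would only become $O(d/n)$, which changes nothing in the final exponent).
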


The rest of the proof of Theorem~\ref{th:regular} is devoted to the proofs of these two claims.

\begin{proof}[Proof of Claim~\ref{cl:concentrated_subgraphs-regular}]
Since the largest absolute value $\lambda(G)$ of a non-trivial eigenvalue of $G$ is less than $2\sqrt{d-1}+1$ whp~\cite{Bordenave,Friedman,Puder}, by applying the expander mixing lemma~\cite[Corollary 9.2.6]{Alon}, we get that whp, for every subset $W\subset U$ of size at least $\varepsilon^2 n$,
$$
\left||E(G[W])|-\frac{d|W|^2}{2n}\right|<\left(\sqrt{d-1}+1\right)|W|< \frac{d|W|^2}{4n},
$$
completing the proof.
\end{proof}

\begin{proof}[Proof of Claim~\ref{cl:non-isomorphic-disjoint-regular}]
We will use the following result, due to McKay~\cite{McKay}. Let $H=H(n)$ be an arbitrary graph with maximum degree at most $d$ on $[n]$, and let $|E(H)|=\omega(1)$. Then
\begin{equation}
\label{eq:McKay}
 \mathbb{P}(H\subset G)=(1+o(1))\frac{\prod_{j\in[n]}d(d-1)\ldots(d-\mathrm{deg}_H j+1)}{2^{|E(H)|}(dn/2)(dn/2-1)\ldots(dn/2-|E(H)|+1)}.
\end{equation}

For every $U'\in\mathcal{U}(U)$ such that $G[U']\cong G[U]$, there should exist an isomorphism $U\to U'$ that moves all edges from $E(G[U\setminus U'])$. If $U$ has subgraphs with the number of edges concentrated as in Claim~\ref{cl:concentrated_subgraphs-regular}, the number of moved edges is at least $\frac{1}{4}\varepsilon^4 dn$. Then, due to~\eqref{eq:McKay} and since $d\gg1/\varepsilon^4$,
\begin{align*}
 \mathbb{P}\biggl(\exists U'\in\mathcal{U}(U)\,\,\, G[U]\cong G[U']\biggr)
 &\leq o(1)+2^n n! \left(\frac{d}{n(1-\varepsilon^4)}\right)^{\frac{1}{4}\varepsilon^4 dn}=o(1),
\end{align*}
completing the proof.
\end{proof}
 
\subsection{Tightness}
\label{sc:regular-tight}

Theorem~\ref{th:regular} is tight in the sense that, for every $n$-vertex graph $G$ with a bounded maximum degree, $\mu(G)\leq 2^{(1-\Theta(1))n}$.

\begin{theorem}
\label{th:max_degree}
For every $C>0$ there exists $\varepsilon>0$ such that, for all large enough $n$, every graph $G$ with $n$ vertices and maximum degree at most $C$ has $\mu(G)\leq 2^{(1-\varepsilon)n}$.
\end{theorem}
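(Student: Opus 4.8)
The plan is to prove Theorem~\ref{th:max_degree} via a simple counting/entropy argument: a graph with bounded maximum degree has very few ``local types'' available at each vertex, so its induced subgraphs cannot be too diverse. First I would fix $C$ and observe that in any graph $G$ with $\Delta(G)\le C$, the number of non-isomorphic rooted balls of radius $1$ (i.e.\ the isomorphism type of a vertex together with its neighbourhood and the edges among neighbours) is bounded by some absolute constant $K=K(C)$. More usefully, for an induced subgraph $G[W]$, the isomorphism type is essentially determined by the multiset of ``local views'': for each $w\in W$, record the isomorphism type of the induced subgraph on $\{w\}\cup (N(w)\cap W)$ together with the partition of these neighbours by which subset of a fixed size they occupy; since $\deg_W(w)\le C$, there are only $2^{O(C\log C)}=:K$ possibilities. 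This shows that $G[W]$ is determined, up to isomorphism, by a function from $W$ to a set of size $K$, but that still gives only the trivial bound $\mu(G)\le \sum_W K^{|W|}\le (K+1)^n$, which is useless.

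So the real approach must exploit the \emph{structure} of $G$, not just each subgraph individually. The key point is that $G$ has at most $Cn/2$ edges, hence $G$ has at least $n/2$ vertices, and in fact at least a constant fraction of vertices lie outside any small edge set; but more to the point, I would use a \emph{fixed} ordering of $V(G)=\{v_1,\dots,v_n\}$ and encode an induced subgraph $W$ by the sequence $(\mathbbm{1}[v_i\in W])_{i=1}^n$, then argue that many of these bits are ``redundant'' for the isomorphism type. Concretely: the plan is to find a large set $S\subseteq V(G)$, of size $\varepsilon' n$, consisting of vertices that are pairwise at distance $\ge 3$ in $G$ (an easy greedy argument gives $|S|\ge n/(C^2+1)$), so that each $s\in S$ has a private neighbourhood, and crucially each $s\in S$ is an isolated vertex in $G[W]$ for a positive constant fraction of $W$ — no, this still needs care. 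The cleaner idea: since the $s\in S$ have disjoint closed neighbourhoods, for a \emph{uniformly random} $W$, with constant probability $\ge 2^{-(C+1)}$ each $s\in S$ is an isolated vertex of $G[W]$; by independence across $S$, whp (over the random $W$) a $(2^{-(C+1)})$-fraction of $S$, i.e.\ $\ge \delta n$ vertices, are isolated in $G[W]$. But isolated vertices of an induced subgraph can be permuted freely, so many different $W$ yield isomorphic $G[W]$.

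Making that precise is the main step, and it is the obstacle I expect to be fiddly: I want to show that the map $W\mapsto G[W]$ collapses a $2^{-o(n)}$-fraction of the $2^n$ sets into classes of size $\ge 2^{\delta' n}$, forcing $\mu(G)\le 2^{(1-\delta')n}+2^{(1-\Omega(1))n}$. For this I would partition the $W$'s by the pair $(W\cap (V\setminus N[S]),\; \#\{\text{isolated-in-}G[W]\text{ vertices of }S\text{ and their identities}\})$ — actually by the \emph{set} $A=\{s\in S: s\text{ isolated in }G[W]\}$ together with the restriction of $W$ to $V\setminus S$ and to $S\setminus A$. Two sets $W,W'$ with the same such data but different intersections with $A$ and with equal cardinality satisfy $G[W]\cong G[W']$, because one isomorphism is: identity off $S$, and an arbitrary bijection between $W\cap A$ and $W'\cap A$ (both are independent sets with no edges to the rest, all sitting inside $A$'s closed neighbourhoods which lie in $V\setminus W$, $V\setminus W'$ respectively). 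Since whp-over-$W$ we have $|A|\ge \delta n$, the number of $W$ inside such a class is $\sum_{j} \binom{|A|}{j}$ summed appropriately, which is $\ge 2^{|A|-1}\ge 2^{\delta n -1}$ for the ``balanced'' choices; a Chernoff bound shows all but a $2^{-\Omega(n)}$ fraction of $W$ fall in classes of size $\ge 2^{\delta n/2}$. Hence $\mu(G)\le \frac{2^n}{2^{\delta n/2}} + 2^n\cdot 2^{-\Omega(n)} \le 2^{(1-\varepsilon)n}$ with $\varepsilon=\varepsilon(C)>0$, as required. The delicate part is verifying that the ``same data, different $W\cap A$'' sets really do induce isomorphic subgraphs (one must be careful that the neighbours of $A$-vertices inside $V\setminus S$ are genuinely absent from both $W$ and $W'$, which is where the distance-$\ge 3$ / private-neighbourhood property of $S$ is used, though actually one only needs the $s\in S$ to be isolated in the respective induced subgraphs, which is exactly the defining condition of $A$), and in getting the Chernoff concentration for $|A|$ cleanly despite the mild dependence coming from shared non-$S$ vertices — here disjointness of the closed neighbourhoods $N[s]$, $s\in S$, makes the events $\{s\in A\}$ genuinely independent, so no real dependence issue arises.
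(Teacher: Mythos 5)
Your argument is correct and is essentially the paper's own proof: both take a set of $\Theta_C(n)$ vertices with pairwise disjoint closed neighbourhoods, observe that for a uniformly random subset $W$ each such vertex is a ``free'' slot (no neighbour in $W$) independently with probability at least $2^{-C}$, permute membership of $W$ within these slots to get isomorphism classes of size $2^{\Omega_C(n)}$ for all but a $2^{-\Omega(n)}$-fraction of subsets, and finish with the same class-counting step. The only thing to fix is the definition of $A$: state it once as $A=\{s\in S:\ N(s)\cap W=\emptyset\}$ (the reading your final parenthetical uses), since under the earlier reading ``$s$ isolated in $G[W]$'' one has $A\subseteq W$, so ``same data, different intersections with $A$'' would be vacuous; with the corrected definition $A$ depends only on $W\cap(V\setminus S)$ and your isomorphism and Chernoff steps go through exactly as in the paper.
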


\begin{proof}
Let $G$ be a graph on $[n]$ with maximum degree at most $C$. Let $\mathbf{U}$ be a uniformly random subset of $[n]$. Let $X$ be the number of sets $U'\neq\mathbf{U}$ such that $G[\mathbf{U}]\cong G[U']$. It suffices to prove that, for a sufficiently small $\varepsilon>0$, 
$$
   \mathbb{P}\biggl(X\geq 2^{\varepsilon n}\biggr)\geq 1-2^{-\varepsilon n}.
$$
Indeed, if this is the case, let us partition the set of all subsets of $[n]$ into equivalence classes, where two sets are equivalent whenever they induce isomorphic subgraphs of $G$.  Then $2^{[n]}=\mathcal{X}_1\sqcup\mathcal{X}_2$, where $\mathcal{X}_1$ is the union of all equivalence classes of size less than $2^{\varepsilon n}$ and it has cardinality $2^n\cdot\mathbb{P}(X<2^{\varepsilon n})<2^{n-\varepsilon n}$. Since every equivalence class that is a subset of $\mathcal{X}_2$ has size at least $2^{\varepsilon n}$, we get that the total number of classes is at most $2^{n-\varepsilon n+1}$.

  We then find a set $I\subset[n]$ of size at least $\frac{n}{C^2}$ such that 1-neighbourhoods in $G$ of all vertices from $I$ are disjoint. For every $v\in I$, let $\delta_v$ be the number of neighbours of $v$ in $\mathbf{U}$. Let 
$$
\xi_v=\1_{v\in \mathbf{U},\, \delta_v=0},\,\,\xi=\sum_{v\in I}\xi_v,\quad\text{ and }\quad
\eta_v=\1_{v\not\in \mathbf{U},\, \delta_v=0},\,\,\eta=\sum_{v\in I}\eta_v.
$$
Since all $\xi_v$, $v\in I$, are independent and stochastically dominate $Bernoulli(2^{-C-1})$, and the same applies to $\eta_v$, we get that
$$
 \max\left\{\mathbb{P}\left(\xi<\frac{n}{100 C^2 2^{C}}\right),
 \mathbb{P}\left(\eta<\frac{n}{100 C^2 2^{C}}\right)\right\}
 <2^{-\varepsilon n-1},
$$ 
for sufficiently small constant $\varepsilon=\varepsilon(C)>0$. In particular, we may assume that $\varepsilon<\frac{1}{100 C^2 2^{C}}$.  For every set $U'$ obtained from $\mathbf{U}$ by removing vertices $v\in\mathbf{U}$ with $\delta_v=0$ and by adding the same number of vertices $u\not\in\mathbf{U}$ with $\delta_u=0$, we have that $G[\mathbf{U}]\cong G[U']$. Indeed, all such vertices belong to $I$ and, therefore, they are not adjacent and so they form independent sets, both in $G[\mathbf{U}]$ and $G[U']$. Therefore, $X\geq 2^{\varepsilon n}$ with probability at least $1-2^{-\varepsilon n}$. 
\end{proof}

\begin{remark}
The bound on the maximum degree in Theorem~\ref{th:max_degree} cannot we weakened to a linear bound on the number of edges: there exists a sequence of graphs $G=G(n)$ with $V(G(n))=[n]$ with at most $2n$ edges so that $\mu(G)=2^{n-O(\sqrt{n})}$. Such $G$ can be constructed in the following way. Start from a path $P$ on $100\sqrt{n}$ vertices, and then draw two edges from every vertex outside of $P$ to $P$ so that (1) every two vertices on $P$ that have a neighbour outside $P$ are at distance at least 10 in $P$ (here, 10 is some rather arbitrarily chosen and not necessarily optimal constant for which the argument works); (2) every vertex on $P$ that has a neighbour outside $P$ is at distance at least 10 from both leaves in $P$; (3) any two vertices outside $P$ have different neighbourhoods in $P$. Then, for every $U\subset V(G)$ such that $V(P)\subset U$ there is at most one other $U'\subset V(G)$ such that $V(P)\subset U'$ and $G[U]\cong G[U']$. Indeed, assume $U\neq U'$ satisfy $V(P)\subset U,U'$ and $G[U]\cong G[U']$. Then any isomorphism $\varphi:U\to U'$ between these two graphs preserves the property of vertices to have degrees of all neighbours equal 2. Moreover, $\varphi$ preserves the property of having degree more than 2. This immediately implies that $\varphi(V(P))=V(P)$, and so $\varphi|_{V(P)}$ is an automorphism of $P$. There are exactly two such automorphisms, and it is easy to see that both automorphisms admit at most one extension to the entire $U$: the trivial automorphism extends only to the trivial automorphism of $G[U]$, and the non-trivial automorphism extends to an isomorphism between $G[U]$ and $G[U']$ for at most one~$U'$. We also note that this construction can be generalised to get, for every $\varepsilon>0$, graphs $G$ with $O_{\varepsilon}(n)$ edges and with $\log_2\mu(G)\geq n-n^{\varepsilon}$. This can be achieved by shrinking $P$ to a path of size $n^{1/k}$, where $k$ is a positive integer so that $1/k<\varepsilon$, and by attaching every vertex outside of $P$ to $k$ vertices on $P$ in a similar manner as above. Nevertheless, such a graph has around $kn$ edges, which grows with $k$. It would be interesting to know the maximum possible $\mu(G)$ achieved by graphs $G$ on $n$ vertices with a given number of edges $m=|E(G)|$.
\end{remark}

\begin{remark}
Using similar ideas, it is easy to get a generalisation of Theorem~\ref{th:regular} for all $\omega(1)\leq d\leq n/2$. For such $d$, whp $G\sim G_{n,d}$ has $\mu(G)=2^{(1-o(1))n}$. 
\end{remark}

\subsection{Exponentially many subgraphs for all $d$: proof of Theorem~\ref{th:expanders}}
\label{sc:regular-proof2}

Let $d\geq 3$ be a constant and let $G\sim G(n,d)$. Due to~\cite[Theorem 1]{FJ} (see also~\cite{EFMN}) there exists a constant $c=c(d)>0$ such that, whp $G$ contains an induced path of length at least $c(d)n$. Let $\varepsilon\in(0,c(d))$ be small enough as a function of $d$, and let $P=(v_1\ldots v_{\ell+1})\subset G$ be an induced path of length exactly $\ell:=\lceil\varepsilon n\rceil$. For every $v_i\in V(P)$, consider an arbitrary edge $\{v_i,u_i\}$ that does not belong to $P$. Since the path $P$ is induced, $u_i\notin V(P)$. Let $U=\{u_1,\ldots,u_{\ell+1}\}$. Note that some of these vertices may coincide, in which case $|U|<\ell+1$. Nevertheless, let us show that most vertices in $\{u_1,\ldots,u_{\ell+1}\}$ do not coincide and have degree 1 in $G[V(P)\cup U]$ whp (independently of the choice of all $u_i$). This would follow from the fact that, 
for a small constant $\varepsilon'\gg\varepsilon$ (say $\varepsilon'=\sqrt{1/\ln(1/\varepsilon)}$ is enough for our goals), the number of edges induced by $U\cup V(P)$ is at most $(1+\varepsilon'/2)|U\cup V(P)|\leq |U\cup V(P)|+\varepsilon'\ell$. Indeed, due to~\eqref{eq:McKay}, whp $G$ does not have subgraphs $H$ with $\varepsilon n\leq |V(H)|\leq 2\varepsilon n$ and $|E(H)|\geq(1+\varepsilon'/2)|V(H)|$ --- the expected number of such subgraphs is at most
\begin{align*}
 \sum_{v=\varepsilon n}^{2\varepsilon n}{n\choose v}{{v\choose 2}\choose (1+\varepsilon'/2)v}\left(\frac{2d}{n}\right)^{(1+\varepsilon'/2)v}&\leq
 \sum_{v=\varepsilon n}^{2\varepsilon n}\exp\left[v\ln\frac{en}{v}+(1+\varepsilon'/2)v\left(\ln(ev)-\ln\frac{n}{2d}\right) \right]\\
 &=\sum_{v=\varepsilon n}^{2\varepsilon n}\exp\left[v\ln\frac{en(ev)^{1+\varepsilon'/2}}{v(n/2d)^{1+\varepsilon'/2}} \right]\\
&\leq\sum_{v=\varepsilon n}^{2\varepsilon n}\exp\left[v\ln((de)^3(v/n)^{\varepsilon'/2}) \right]\\
&\leq\sum_{v=\varepsilon n}^{2\varepsilon n}\exp\left[v\left(3\ln(de)-\frac{\varepsilon'}{2}\cdot\ln\frac{1}{2\varepsilon}\right) \right]\\
&\leq\sum_{v=\varepsilon n}^{2\varepsilon n}\exp\left[-\frac{1}{3}v\left(\ln\frac{1}{\varepsilon}\right)^{1/2} \right]=\exp(-\Theta(n)).
\end{align*}
Then, whp $G[U\cup V(P)]$ has excess at most $\varepsilon'\ell$ and, therefore, there exists a set $U^*\subset U$ of size at least $(1-2\varepsilon')\ell$ such that 
\begin{itemize}
\item vertices $v_1,v_2,v_{\ell},v_{\ell+1}$ do not have neighbours in $U^*$;
\item each vertex from $U^*$ contains exactly one neighbour in $V(P)\cup U$, and this neighbour lies on $P$. 
\end{itemize}
Note that the second condition immediately implies that there are no two vertices in $U^*$ that share a neighbour on $P$. Indeed, otherwise, let $u_i,u_j\in U^*$ be both adjacent to $v_i$, say. Then $u_j$ has two neighbours on $P$, $v_i$ and $v_j$ --- a contradiction.

Let us show that the {\it comb} $H:=G[V(P)\cup U^*]$ has exponentially (in $n$) many non-isomorphic induced subgraphs. Assume that there are two different subsets $W,W'\subset U^*$ such that $G[V(P)\cup W]\cong G[V(P)\cup W']$. Let $\varphi:V(P)\cup W\to V(P)\cup W'$ be an isomorphism between these two graphs. Since all leaves of the tree $G[V(P)\cup W]$ that belong to $W$ adjacent to vertices of degree 3 (in contrast to the leaves $v_1,v_{\ell+1}$), $\varphi$ has to map $W$ to $W'$. Therefore, $\varphi|_{V(P)}$ is an automorphism of $P$. If $\varphi|_{V(P)}$ is identity, then $W=W'$. Otherwise, when $\varphi|_{V(P)}$ is a non-trivial involution, there are at most two ways to choose $W'$, for a given $W$, so that $G[V(P)\cup W]\cong G[V(P)\cup W']$. 
 We conclude that the number of non-isomorphic induced subgraphs $F\subset H$ with $V(F)\supseteq V(P)$ is at least $2^{|U^*|-1}\geq 2^{(1-2\varepsilon')\ell-1}\geq2^{\varepsilon(1-2\varepsilon')n-1}=2^{\Theta(n)}$, completing the proof of the theorem.

\appendix

\section{Proof of the lower bound in Claim~\ref{cl}}
\label{sc:appendix}

Here, we prove

\begin{claim}
Whp $\xi\geq \alpha_n+(1-o(1))\beta_n$, where $\alpha_n$ and $\beta_n$ are defined in~\eqref{eq:alpha_beta}.
\label{cl:cl_lower}
\end{claim}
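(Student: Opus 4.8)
\emph{Proof plan.} Fix a small constant $\delta>0$; since $\delta$ is arbitrary it suffices to prove that whp $\xi\geq t_n:=\alpha_n+(1-\delta)\beta_n$, and then take $\delta\to 0$ along a sequence. The natural approach is a second moment argument on $X:=\#\{\{x,x'\}:\xi_{x,x'}>t_n\}$. The de~Moivre--Laplace estimate already carried out in the proof of Claim~\ref{cl} (the probability of $\mathcal{B}_{x,x'}$) gives $\mathbb{P}(\xi_{x,x'}>t_n)=n^{-2+\Theta(\delta)+o(1)}$, hence $\mathbb{E}X=\binom n2\mathbb{P}(\xi_{x,x'}>t_n)=n^{\Theta(\delta)+o(1)}\to\infty$, so by Chebyshev it is enough to show $\operatorname{Var}(X)=o((\mathbb{E}X)^2)$.

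Expand $\mathbb{E}[X^2]$ over ordered pairs of vertex-pairs, grouped by the size of their intersection. The diagonal contributes $\mathbb{E}X=o((\mathbb{E}X)^2)$. For vertex-disjoint $\{x,x'\},\{y,y'\}$, conditioning on the at most four edges joining them changes each of $\xi_{x,x'},\xi_{y,y'}$ by $O(1)=o(\beta_n)$, so the two events are asymptotically independent and this part contributes $(1+o(1))(\mathbb{E}X)^2$. The one delicate term is that of pairs $\{x,x'\},\{x,y'\}$ sharing a single vertex $x$ --- and this is exactly the mechanism behind the spurious variance flagged after Claim~\ref{cl} (cf.~\cite{Rodi}): an atypically small $\deg x$ simultaneously raises $\xi_{x,x'}$ for every partner. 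To control it, condition on $N(x)$. Given $N(x)$, the events $\{\xi_{x,x'}>t_n\}$, $\{\xi_{x,y'}>t_n\}$ are asymptotically independent, and since $\xi_{x,x'}$ is determined by $\deg x,\deg x'$ and $|N(x)\cap N(x')|$, their conditional probabilities depend up to $1+o(1)$ only on $\deg x$, say they equal $g(\deg x)$ with $g(d)=\bar\Phi\!\big(((d-np)(1-2p)+(1-\delta)\beta_n)/\sqrt{np(1-p)}\big)$. Thus the shared-vertex term is at most $(1+o(1))\,n^2\sum_x g(\deg x)^2$, and by Markov it suffices that $\mathbb{E}[g(\deg x)^2]=o\!\big(n(\mathbb{E}\,g(\deg x))^2\big)$.

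Feeding the (sub)Gaussian tail of $\deg x$ into this explicit $g$ and optimising the exponent, the required inequality reduces to the numerical comparison $4(1-\delta)^2(1-2p)^2<1+2(1-2p)^2$, which for small $\delta$ holds as soon as $2(1-2p)^2<1$, i.e. as soon as $p>\tfrac12-\tfrac1{2\sqrt2}$; this settles all such $p$ (so in particular all $p$ bounded away from $0$ by more than this constant). At the opposite extreme, when $p=o\!\big((\ln n/n)^{1/3}\big)$ one bypasses the second moment entirely: $\xi_{x,x'}\geq (n-2)-\deg x-\deg x'$, and whp there are $\omega(1)$ vertices of degree at most $np-(1-\delta/2)\sqrt{2np(1-p)\ln n}$ (their number is $\mathrm{Bin}\big(n,\,n^{-(1-\delta/2)^2+o(1)}\big)$ in law); any two of them already witness $\xi\geq\alpha_n+(1-\delta)\beta_n$ precisely because $np^3=o(\ln n)$ makes the $2|N(x)\cap N(x')|$ correction negligible relative to $\beta_n$.

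The genuine obstacle is the intermediate range $\omega\!\big((\ln n/n)^{1/3}\big)\le p\le \tfrac12-\tfrac1{2\sqrt2}$ (which includes both small and constant $p$). There $\mathbb{E}X$ is dominated neither by pairs of typical vertices nor by pure degree deviations, but by pairs $\{x,x'\}$ that are \emph{simultaneously} of somewhat small degree and of somewhat large common neighbourhood. Accordingly I would run the above $n^2\sum_x(\cdots)^2$ bound for a \emph{truncated} count $X'$ of such pairs, calibrating the degree- and common-neighbourhood-windows so that $\mathbb{E}X'=(1-o(1))\mathbb{E}X$ while the shared-vertex term is forced down to $o((\mathbb{E}X)^2)$ (keeping the windows loose enough not to lose the bulk of the first moment, yet tight enough to kill the clustering around low-degree vertices). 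The inputs this needs are multivariate local/tail estimates for the joint law of $\big(\deg x,\deg x',|N(x)\cap N(x')|\big)$, of the type developed in~\cite{Rodi}; this bookkeeping, rather than any new conceptual ingredient, is where the real work lies.
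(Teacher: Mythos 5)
Your framework is sound and matches the paper's in spirit: a second moment for the count of pairs exceeding $\alpha_n+(1-\delta)\beta_n$, with the shared-vertex term identified as the obstruction and controlled by conditioning on the degree of the common vertex. The pieces you actually carry out check out: the reduction to $\mathbb{E}[g(\deg x)^2]=o\bigl(n(\mathbb{E}g(\deg x))^2\bigr)$ and the resulting Gaussian optimisation do give exactly the condition $4(1-\delta)^2(1-2p)^2<1+2(1-2p)^2$, hence coverage only for $p>\tfrac12-\tfrac1{2\sqrt2}$; and the minimum-degree shortcut for $p=o((\ln n/n)^{1/3})$ is correct, since there $np^3=o(\ln n)$ makes the common-neighbour term negligible (one quibble: the number of vertices of degree below the threshold is \emph{not} binomial in law, as degrees are dependent; you need a routine second-moment or Poisson-approximation step to get two such vertices whp).

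The genuine gap is that the entire intermediate range, roughly $n^{-1/3+o(1)}\lesssim p\leq \tfrac12-\tfrac1{2\sqrt2}$ — which contains all small constant $p$ and a wide polynomial range — is only announced as a plan ("truncate in degree and codegree windows, invoke multivariate local/tail estimates"), not proved; and this is precisely where the claim's difficulty lies. The paper's Appendix A resolves it by pruning on degrees alone: it restricts to pairs with $\deg(x),\deg(x')\in[np-\sqrt{2np(1-p)\ln n},\,np+\sqrt{2np(1-p)\ln n}]$, shows via an auxiliary i.i.d.\ sum ($\tilde\xi_{x,x'}=\xi_{x,x'}+\deg(x)$, and its analogue for small degrees) that this pruning costs only a negligible fraction of the first moment, and then bounds the shared-vertex term by $\sum_{m}\mathbb{P}(\deg(x')=m)\bigl[\mathbb{P}(\zeta_m+\zeta'_m>nq+k-2)\bigr]^2$ with $\zeta_m\sim\mathrm{Bin}(m,p)$, $\zeta'_m\sim\mathrm{Bin}(n-m,1-p)$, evaluated by local limit theorems separately in three ranges ($p\leq n^{-2/3}$, $n^{-2/3}<p\leq n^{-1/2}\ln^2 n$, $p>n^{-1/2}\ln^2 n$). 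Note why the degree-only truncation saves your argument: it caps the degree deviation at $\sqrt{2\ln n}$ standard deviations, which for small $p$ lies inside the point where your unconstrained Gaussian optimisation places its mass, and the boundary case then wins only by the slim margin $4q-4(1-p)^2=4p^2$ (and in the paper's final exponent, by $\Theta(\varepsilon^2)$). This is exactly why crude subgaussian tails do not suffice and the local-CLT precision is needed; so what you defer as "bookkeeping" is in fact the substantive part of the proof, and as it stands your proposal does not establish the claim in its full range.
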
 

\begin{proof}
Letting 
$$
\mathcal{B}_{x,x'}(k)=\left\{\xi_{x,x'}-nq>k\right\}\quad\text{ and }\quad k=\sqrt{(4-\varepsilon)nq(1-q)\ln n},
$$
we get
$$
 \mathbb{P}\left(\mathcal{B}_{x,x'}(k)\right)=\frac{n^{-2+\varepsilon+o(1)}}{\sqrt{2(4+\varepsilon)\pi\ln n}},\quad \mathbb{P}\left(\xi_{x,x'}-nq=k+O(1)\right)=O\left(\frac{1}{\sqrt{nq}}\cdot\mathbb{P}\left(\mathcal{B}_{x,x'}(k)\right)\right).
$$
Let $X$ be the number of pairs $\{x,x'\}$ such that the event 
$\mathcal{B}_{x,x'}(k)$ holds. We get that 
\begin{equation}
\mathbb{E}[X]=n^{\varepsilon-o(1)}.
\label{eq:expectation_large}
\end{equation}
One can expect that $\mathrm{Var}[X]=o((\mathbb{E}[X])^2)$ implying the desired assertion due to Chebyshev's inequality. However, the rare event that there exist a vertex with a large degree contributes superfluously to the variance (cf.~\cite{Rodi}). Thus, we consider the following ``pruned'' version of the random variable $X$: 
$$
 \tilde X=\sum_{x,x'}\1_{\mathcal{\tilde B}_{x,x'}(k)},\quad\text{ where}
$$
$$
\mathcal{\tilde B}_{x,x'}(k):=\mathcal{B}_{x,x'}(k)\cap\left\{d^-\leq\mathrm{deg}(x)\leq d^+\right\}\cap\left\{d^-\leq\mathrm{deg}(x')\leq d^+\right\},
$$
$$
 d^-=np-\sqrt{2np(1-p)\ln n},\quad d^+=np+\sqrt{2np(1-p)\ln n}.
$$ 
For fixed $x,x'$, the event 
$\mathcal{B}_{x,x'}(k)\cap\left\{\mathrm{deg}(x)> d^+\right\}
$  implies 
\begin{align*}
\tilde\xi_{x,x'} & :=2|N(x)\cap N(x')|+|N(x)\setminus N(x')|+|[n]\setminus(N(x)\cup N(x'))|\\
&>n(1-p+2p^2)+\sqrt{2p(1-p)n\ln n}\left(1+\sqrt{(4-\varepsilon)q}\right).
\end{align*}
The latter random variables is a sum of $n$ independent random variables $\xi_1,\ldots,\xi_n$, where 
$$
\mathbb{P}(\xi_i=2)=p^2,\,\,\mathbb{P}(\xi_i=1)=1-p,\,\,\text{ and }\,\,\mathbb{P}(\xi_i=0)=p-p^2.
$$
Thus, $\mathbb{E}[\tilde\xi_{x,x'}]=n(1-p+2p^2)$ and $\mathrm{Var}[\tilde\xi_{x,x'}]=np(1-p)(1+4p^2)$. By the local limit theorem,
\begin{multline*}
 \mathbb{P}\left(\tilde\xi_{x,x'}-\mathbb{E}[\tilde\xi_{x,x'}]>\sqrt{2p(1-p)n\ln n}\left(1+\sqrt{(4-\varepsilon)q}\right)\right) \\
 =\frac{1+o(1)}{\sqrt{2\pi}}\int_{\left(1+\sqrt{(4-\varepsilon)q}\right)\sqrt{\frac{2\ln n}{1-p+2p^2}}}^{\infty}e^{-t^2/2}dt
= n^{-(2+o(1))\frac{(1+\sqrt{(4-\varepsilon)q)^2}}{1-p+2p^2}}=o(n^{-2})
\end{multline*}
since $1-p+2p^2\leq 1$ while $1+\sqrt{(4-\varepsilon)q)^2}>1$. In a similar way,
$$
\mathbb{P}\left(\mathcal{B}_{x,x'}(k)\cap\left\{\mathrm{deg}(x)< d^-\right\}\right)=o(n^{-2}).
$$
Thus,
\begin{equation}
 \mathbb{E}[\tilde X]\geq \mathbb{E}[X]-2n^2\mathbb{E}[\tilde\xi_{x,x'}]=\mathbb{E}[X]-o(1).
\label{eq:expectation_tilde_large}
\end{equation}

Consider two disjoint pairs of vertices $\{x_1,x_1'\}$ and $\{x_2,x_2'\}$. Let $\eta_{x_i,x'_i}$, $i\in\{1,2\}$, be the number of vertices in $[n]\setminus\{x_1,x'_1,x_2,x'_2\}$ that are either adjacent to both $x_i,x'_i$, or non-adjacent to both; and let $\eta_{x_i}$ (or $\eta_{x'_i}$) be the number of neighbours of $x_i$ (or $x'_i$) in $[n]\setminus\{x_1,x'_1,x_2,x'_2\}$. Then
\begin{align}
 \mathbb{P}\left(\bigcap_{i=1,2}\mathcal{\tilde B}_{x_i,x_i'}(k)\right) &\leq\mathbb{P}\left(\bigcap_{i=1,2}\left\{\eta_{x_1,x_1'}>nq+k-2 , \, \eta_{x_i}\leq d^+, \, \eta_{x'_i}\leq d^+\right\}\right)\notag\\
 &=
 \prod_{i=1,2}\mathbb{P}\left(\eta_{x_1,x_1'}>nq+k-2,\,\eta_{x_i}\in[d^-, d^+],\,\eta_{x'_i}\in[d^-, d^+]\right)\notag\\
 &\leq
\prod_{i=1,2}\mathbb{P}\left(\mathcal{B}_{x_1,x_1'}(k-2)\cap\left\{\mathrm{deg}(x_i),\mathrm{deg}(x'_i)\in[d^-+3, d^++3]\right\}\right)\notag\\
 &=\left(1+O\left(\frac{1}{\sqrt{np}}\right)\right)\mathbb{P}\left(\mathcal{\tilde B}_{x_1,x_1'}(k)\right)
 \mathbb{P}\left(\mathcal{\tilde B}_{x_2,x_2'}(k)\right).
 \label{eq:variance_disjoint}
\end{align}

Now, let us consider pairs $\{x,x'\}$ and $\{x',x''\}$. As above, $\eta_{x,x'}$ and $\eta_{x',x''}$ are numbers of vertices in $[n]\setminus\{x,x',x''\}$ that are either adjacent to both $x,x'$ (both $x',x''$), or non-adjacent to both; for $y\in\{x,x',x''\}$, $\eta_{y}$ is the number of neighbours of $y$ in $[n]\setminus\{x,x',x''\}$. Let 
$$
\mathcal{M}=\mathbb{Z}\cap\left[d^-,d^+\right],\quad
\mathcal{M}_{\varepsilon}=\mathbb{Z}\cap\left[d^-(\varepsilon),d^+(\varepsilon)\right],\quad\text{ where}
$$
$$
 d^-(\varepsilon)=np-\sqrt{(2-\varepsilon)np(1-p)\ln n},\quad
 d^+(\varepsilon)=np+\sqrt{(2-\varepsilon)np(1-p)\ln n}.
$$
For $m\in\mathcal{M}$, we consider independent $\zeta_m\sim\mathrm{Bin}(m,p)$ and $\zeta'_m\sim\mathrm{Bin}(n-m,1-p)$. Then,
\begin{align*}
 \mathbb{P}\left(\mathcal{\tilde B}_{x,x'}(k)\cap \mathcal{\tilde B}_{x',x''}(k)\right) &\leq\mathbb{P}\left(\eta_{x,x'}>nq+k-2;\,\,\eta_{x',x''}>nq+k-2;\,\,\eta_x,\eta_{x'},\eta_{x''}\in[d^-, d^+]\right)\\
 &\leq
 \sum_{m\in\mathcal{M}}\mathbb{P}(\eta_{x'}=m)\left[\mathbb{P}\left(\zeta_m+\zeta'_m>nq+k-2\right)\right]^2
\end{align*}
and also
\begin{align*}
 \mathbb{P}\left(\mathcal{\tilde B}_{x,x'}(k)\cap \mathcal{\tilde B}_{x',x''}(k)\right) &\leq\mathbb{P}\left(\mathcal{\tilde B}_{x,x'}(k)\right)\max_{m\in\mathcal{M}_{\varepsilon}}\mathbb{P}\left(\eta_{x',x''}>nq+k-2\mid \eta_{x'}=m\right)\\
 &\quad \quad +\sum_{m\in\mathcal{M}\setminus\mathcal{M}_{\varepsilon}}\mathbb{P}(\eta_{x'}=m)\left[\mathbb{P}\left(\zeta_m+\zeta'_m>nq+k-2\right)\right]^2.\\
 &\leq
 \mathbb{P}\left(\mathcal{\tilde B}_{x,x'}(k)\right) \max_{m\in\mathcal{M}_{\varepsilon}}\mathbb{P}\left(\zeta_m+\zeta'_m>nq+k-2\right)\\
  &\quad \quad +\sum_{m\in\mathcal{M}\setminus\mathcal{M}_{\varepsilon}}\mathbb{P}(\eta_{x'}=m)\left[\mathbb{P}\left(\zeta_m+\zeta'_m>nq+k-2\right)\right]^2.
\end{align*}
If $p\leq n^{-2/3}$, then any $m$ from $\mathcal{M}$ equals $np(1+o(1))$. Thus, $$
\mathbb{P}(\zeta_m\geq 10)\leq (np(1+o(1)))^{10}p^{10}\leq ((1+o(1))n^{-1/3})^{10}=o(n^{-3}).
$$
In this case,
\begin{align*}
\mathbb{P}\left(\mathcal{\tilde B}_{x,x'}(k)\cap \mathcal{\tilde B}_{x',x''}(k)\right)
&\leq \mathbb{P}\left(\mathcal{\tilde B}_{x,x'}(k)\right)\mathbb{P}\left(\mathrm{Bin}(n-d^-(\varepsilon),1-p)>nq+k-11\right)+o(n^{-3})\\
&\quad +\sum_{m\in\mathcal{M}\setminus\mathcal{M}_{\varepsilon}}\mathbb{P}(\eta_{x'}=m)\left[\mathbb{P}\left(\mathrm{Bin}(n-d^-,1-p)>nq+k-11\right)\right]^2\\
&\leq \mathbb{P}\left(\mathcal{\tilde B}_{x,x'}(k)\right)\mathbb{P}\left(n-d^-(\varepsilon)-\mathrm{Bin}(n,p)>nq+k-20\right)\\
&\quad +\mathbb{P}(\eta_{x'}\in\mathcal{M}_{\varepsilon})\left[\mathbb{P}\left(n-d^--\mathrm{Bin}(n,p)>nq+k-20\right)\right]^2+o(n^{-3})\\
&\leq \mathbb{P}\left(\mathrm{Bin}(n,p)<np-\left(\sqrt{8-2\varepsilon}-\sqrt{2-\varepsilon}\right)\sqrt{np(1-p)\ln n}+21\right)\\
&\quad \times \mathbb{P}\left(\mathcal{\tilde B}_{x,x'}(k)\right)+o(n^{-3}) +\sqrt{np\ln n}\cdot\mathbb{P}(\eta_{x'}=d^-(\varepsilon))\\
&\quad\quad\times\mathbb{P}\left[\mathrm{Bin}(n,p)<np-\left(\sqrt{8-2\varepsilon}-\sqrt{2}\right)\sqrt{np(1-p)\ln n}+21\right]^2.
\end{align*}

Since $\sqrt{8-2\varepsilon}-\sqrt{2-\varepsilon}>\sqrt{2+\varepsilon^2/16}$, by the de Moivre--Laplace limit theorem, we get
\begin{align}
 \mathbb{P}\left(\mathcal{\tilde B}_{x,x'}(k)\cap \mathcal{\tilde B}_{x',x''}(k)\right) &\leq
 \mathbb{P}\left(\mathcal{\tilde B}_{x,x'}(k)\right)o(1/n)+\sqrt{np\ln n}\cdot \frac{n^{-1+\varepsilon/2+o(1)}}{\sqrt{np}}\cdot \left(n^{-1+\varepsilon/2}\right)^2+o(n^{-3})\notag\\
 &=\mathbb{P}\left(\mathcal{\tilde B}_{x,x'}(k)\right)o(1/n)+n^{-3+3\varepsilon/2+o(1)}.
\label{eq:variance_depend_small}
\end{align}

Let $n^{-2/3}<p\leq n^{-1/2}\ln^2 n$. For $m\in\mathcal{M}$, $\mathbb{P}(\zeta_m\geq\ln^5 n-3)=o(1/n)$. As above, we get
\begin{align*}
\mathbb{P}\left(\mathcal{\tilde B}_{x,x'}(k)\cap \mathcal{\tilde B}_{x',x''}(k)\right)
&\leq \mathbb{P}\left(\mathrm{Bin}(n,p)<np-\left(\sqrt{8-2\varepsilon}-\sqrt{2-\varepsilon}\right)\sqrt{np(1-p)\ln n}+3\ln^5 n\right)\\
&\quad \times\mathbb{P}\left(\mathcal{\tilde B}_{x,x'}(k)\right) +o(n^{-3})+\sqrt{np\ln n}\cdot\mathbb{P}(\eta_{x'}=d^-(\varepsilon))\\
&\quad\quad\times\mathbb{P}\left[\mathrm{Bin}(n,p)<np-\left(\sqrt{8-2\varepsilon}-\sqrt{2}\right)\sqrt{np(1-p)\ln n}+3\ln^5 n\right]^2.
\end{align*}
Again, due to the de Moivre--Laplace limit theorem, we get
\begin{equation}
\mathbb{P}\left(\mathcal{\tilde B}_{x,x'}(k)\cap \mathcal{\tilde B}_{x',x''}(k)\right) \leq\mathbb{P}\left(\mathcal{\tilde B}_{x,x'}(k)\right)o(1/n)+n^{-3+3\varepsilon/2+o(1)}.
\label{eq:variance_depend_medium}
\end{equation}
In both cases,
$$
 \frac{\mathrm{Var}[\tilde X]}{(\mathbb{E}[\tilde X])^2}=
 \frac{\mathbb{E}[\tilde X(\tilde X -1)]-(\mathbb{E}[\tilde X])^2}{(\mathbb{E}[\tilde X])^2}+o(1)\leq O\left(\frac{1}{\sqrt{np}}\right)+\frac{o\left(\mathbb{E}[\tilde X]\right)+n^3\cdot n^{-3+3\varepsilon/2+o(1)}}{(\mathbb{E}[\tilde X])^2}=o(1),
$$
due to~\eqref{eq:expectation_large},~\eqref{eq:expectation_tilde_large},~\eqref{eq:variance_disjoint},~\eqref{eq:variance_depend_small},~\eqref{eq:variance_depend_medium}.

Finally, let $p>n^{-1/2}\ln^2 n$. Fix $m\in\mathcal{M}$. By the de Moivre--Laplace limit theorem, uniformly over all $\ell$ such that $|\ell-mp-(n-m)(1-p)|\leq\sqrt{np}\ln n$,
\begin{align*}
 \mathbb{P}\left(\zeta_m+\zeta'_m=\ell\right) &=\sum_{s=0}^m\mathbb{P}\left(\zeta_m=s)\mathbb{P}(\zeta'_m=\ell-s\right)\\
 &=(1+o(1))\sum_{s=mp-\sqrt{mp(\ln n)^{1.1}}}^{mp+\sqrt{mp(\ln n)^{1.1}}}\mathbb{P}\left(\zeta_m=s)\mathbb{P}(\zeta'_m=\ell-s\right)\\
 &=\frac{1+o(1)}{2\pi p(1-p)\sqrt{m(n-m)}}\sum_s\exp\left[-\frac{(s-mp)^2}{2mp(1-p)}-\frac{(n-m-\ell+s-(n-m)p)^2}{2(n-m)p(1-p)}\right]\\
 &=\frac{1+o(1)}{2\pi p(1-p)\sqrt{m(n-m)}} e^{-\frac{(\ell-n(1-p)-m(2p-1))^2}{2np(1-p)}}\int_{\mathbb{R}}e^{-\frac{(t-m((n-m)(2p-1)+\ell)/n)^2}{2m(n-m)p(1-p)/n}}dt\\
 &=\frac{1+o(1)}{\sqrt{2\pi p(1-p)n}} e^{-\frac{(\ell-n(1-p)-m(2p-1))^2}{2np(1-p)}}.
\end{align*}
Therefore,
\begin{align*}
 \mathbb{P}\left(\mathcal{\tilde B}_{x,x'}(k)\cap \mathcal{\tilde B}_{x',x''}(k)\right) &\leq
 \sum_{m\in\mathcal{M}}\mathbb{P}(\eta_{x'}=m)\left[\mathbb{P}\left(\zeta_m+\zeta'_m>nq+k-2\right)\right]^2\\
 &=
 \sum_{m\in\mathcal{M}}\frac{1+o(1)}{\sqrt{2\pi np(1-p)}}e^{-\frac{(np-m)^2}{2np(1-p)}}\left[\int_{\frac{(1-2p)(m-np)+k}{\sqrt{np(1-p)}}}^{\infty}\frac{1}{\sqrt{2\pi}}e^{-t^2/2}dt\right]^2\\
  &=
 \sum_{m\in\mathcal{M}}\frac{1+o(1)}{2\pi((1-2p)(m-np)+k)}e^{-\frac{(np-m)^2}{2np(1-p)}-\frac{((1-2p)(m-np)+k)^2}{np(1-p)}}\\
   &=
 \sum_{m\in\mathcal{M}}\frac{1+o(1)}{2\pi((1-2p)(m-np)+k)}e^{-\frac{\left(1+2(1-2p)^2\right)\left(m-np+\frac{2k(1-2p)}{1+2(1-2p)^2}\right)^2+\frac{2k^2}{1+2(1-2p)^2}}{2np(1-p)}}\\
 &\leq\frac{1+o(1)}{\sqrt{2\pi\ln n}\left(\sqrt{(4-\varepsilon)q}-(1-2p)\right)}e^{-\frac{k^2}{np(1-p)(1+2(1-2p)^2)}}\\
 &\quad\quad\quad\quad\quad\quad\quad
 \times\int_{\left(\frac{2k(1-2p)}{1+2(1-2p)^2}-\sqrt{2np(1-p)\ln n}\right)/\sqrt{\frac{np(1-p)}{1+2(1-2p)^2}}}^{\infty}\frac{1}{\sqrt{2\pi}}e^{-t^2/2}dt\\
 &=\Theta\left(\frac{1}{\ln n}\right)n^{-\frac{2(4-\varepsilon)q+\left(2\sqrt{(4-\varepsilon)q}(1-2p)-(1+2(1-2p)^2)\right)^2}{1+2(1-2p)^2}}.
\end{align*}
The function
\begin{multline*}
 \frac{2(4-\varepsilon)q+\left(2\sqrt{(4-\varepsilon)q}(1-2p)-(1+2(1-2p)^2)\right)^2}{1+2(1-2p)^2}\\
  =1+2\left(1-2p-\sqrt{(4-\varepsilon)(1-2p+2p^2)}\right)^2=:g(p)
\end{multline*}
is decreasing in $p$ since 
$$
\frac{d}{dp}\sqrt{(g(p)-1)/2}=\frac{2(1-2p)\sqrt{4-\varepsilon}-4\sqrt{1-2p+2p^2}}{2\sqrt{1-2p+2p^2}}<0.
$$
Therefore,
\begin{align}
 \mathbb{P}\left(\mathcal{\tilde B}_{x,x'}(k)\cap \mathcal{\tilde B}_{x',x''}(k)\right)=O\left(\frac{1}{\ln n}\right)n^{-\left(1+2\left(1-\sqrt{4-\varepsilon}\right)^2\right)}=n^{-11+\varepsilon+8\sqrt{1-\varepsilon/4}+o(1)}< n^{-3}.
 \label{eq:variance_depend_large}
 \end{align}
Due to~\eqref{eq:expectation_large},~\eqref{eq:expectation_tilde_large},~\eqref{eq:variance_disjoint},~\eqref{eq:variance_depend_large}, 
$$
 \frac{\mathrm{Var}[\tilde X]}{(\mathbb{E}[\tilde X])^2}=
 \frac{\mathbb{E}[\tilde X(\tilde X -1)]-(\mathbb{E}[\tilde X])^2}{(\mathbb{E}[\tilde X])^2}+o(1)\leq O\left(\frac{1}{\sqrt{np}}\right)+\frac{n^3\cdot n^{-3}}{(\mathbb{E}[\tilde X])^2}=o(1),
$$
completing the proof.
\end{proof}

\section{Proof of Claim~\ref{cl:boring}}
\label{app:cl_boring_proof}

First, let us notice that the inequality stated in the claim is true for all $p\in(1/3,1/2]$. Indeed, in this range, 
$$
 1+(1-p)^{5.4}< 1+(2/3)^{5.4}<1.2< 2^{1/2}\leq 2^{1-2p(1-p)}.
$$
Now, let $p\in(0,1/3]$. We get $(1-p)^{5.4}<e^{-5.4p}$ and
\begin{align*}
\frac{d}{dp}\left(1+e^{-5.4p}- 2\cdot e^{(-2p+2p^2)\ln 2}\right)
&=4\ln 2(1-2p) e^{(-2p+2p^2)\ln 2}-5.4 e^{-5.4p}\\
&<5.4e^{-2p\ln 2}\left(\frac{4\ln 2}{5.4}(1-2p) -e^{-(5.4-2\ln 2)p}\right).
\end{align*}
Since
$$
\frac{d}{dp}\left(\frac{4\ln 2}{5.4}(1-2p)-e^{-(5.4-2\ln 2)p}\right)=
-\frac{8\ln 2}{5.4}+(5.4-2\ln 2)e^{-(5.4-2\ln 2)p}>0,
$$
we get that 
$$
\frac{4\ln 2}{5.4}(1-2p)-e^{-(5.4-2\ln 2)p}\leq
\frac{4\ln 2}{3\cdot 5.4}-e^{-(5.4-2\ln 2)/3}<0.
$$
Thus, $1+e^{-5.4p}- 2\cdot e^{(-2p+2p^2)\ln 2}$ decreases and is strictly smaller than its value at $p=0$, implying that 
$$
 \gamma(p):=1+e^{-5.4p}-2\cdot e^{(-2p+2p^2)\ln 2}<\gamma(0)=0
$$ 
for all $p\in(0,1/3]$, completing the proof.

\end{document}